\newtheorem{remark}{Remark}
 \newtheorem{definition}{Definition}
\newtheorem{lemma}[remark]{Lemma}
\newtheorem{theorem}[remark]{Theorem}
\newtheorem{proposition}[remark]{Proposition}
\newtheorem{corollary}[remark]{Corollary}
  \newtheorem{problem}[definition]{Problem}
\title{On the weak Roman domination number of lexicographic product graphs}
\author{Magdalena Valveny, Hebert P\'{e}rez-Ros\'{e}s, Juan A. Rodr\'{\i}guez-Vel\'{a}zquez
 \\
\\
{\small Departament d'Enginyeria Inform\`atica i Matem\`atiques
}\\
{\small Universitat Rovira i Virgili,  Av. Pa\"{\i}sos Catalans 26,
43007 Tarragona, Spain.} \\{\small magdalena.valveny@estudiants.urv.cat, hebert.perez@urv.cat, juanalberto.rodriguez\@@urv.cat}
}
\begin{document}
\maketitle

\begin{abstract}
A vertex $v$ of a graph $G=(V,E)$  is said to be undefended with respect to a function $f: V \longrightarrow \{0,1,2\}$ if  $f(v)=0$ and $f(u)=0$ for every vertex $u$ adjacent to $v$. We call the function $f$ a weak  Roman dominating function  if for every $v$ such that $f(v)=0$ there exists a vertex $u$ adjacent to $v$ such that $f(u)\in  \{1,2\}$ and the function $f': V \longrightarrow \{0,1,2\}$ defined by $f'(v)=1$, $f'(u)=f(u)-1$ and $f'(z)=f(z)$ for every $z\in V \setminus\{u,v\}$, has no undefended vertices. 
The weight of $f$ is $w(f)=\sum_{v\in V(G) }f(v)$.
The  weak Roman domination number of a graph $G$, denoted by $\gamma_r(G)$, is  the minimum weight among all weak Roman dominating functions on $G$. Henning and Hedetniemi [Discrete Math. 266 (2003) 239-251] showed that the problem of computing $\gamma_r(G)$ is NP-Hard, even when restricted to bipartite or chordal graphs. This suggests finding
$\gamma_r(G)$ for special classes of graphs or obtaining good bounds on this invariant. 
In this  article, we obtain closed formulae and tight bounds for the weak  Roman domination number of lexicographic product graphs in terms of invariants of the factor graphs involved in the product.  
\end{abstract}

{\it Keywords: Domination number; weak Roman domination number; domination in graphs; lexicographic product.}  

{\it AMS Subject Classification numbers: 05C69; 	05C76}

\section{Introduction}

 Cockaine et al. \cite{Cockayne2004} defined a {\it Roman dominating  function}  (RDF) on a graph $G$ to be a function $f: V(G)\longrightarrow \{0,1,2\}$ satisfying the condition that every vertex $u$ for which $f(u)=0$ is adjacent to at least one vertex $v$ for which $f(v)=2$. The \textit{weight} of $f$ is $w(f)=\sum_{v\in V(G)}f(v)$, and for  $X\subseteq V(G)$ we define the weight of $X$  to be $f(X)= \sum_{v\in X}f(v)$. The \textit{Roman domination number}, denoted by   $\gamma_R(G)$, is the minimum weight among all Roman dominating functions on $G$, \textit{i.e.}, $$\gamma_R(G)=\min\{w(f):\, f \text{ is a RDF on } G\}.$$ The Roman domination theory was motivated by an article by Ian Stewart entitled ``Defend the Roman Empire!" \cite{Stewart1999}. Each vertex in our graph represents a \textit{location} in the Roman Empire and the value $f(v)$ corresponds to the number of \textit{legions}  stationed in $v$. A location $v$ is \textit{unsecured} if no legions are stationed there (\textit{i.e.}, $f(v)=0$) and \textit{secured} otherwise (\textit{i.e.}, $f(v)\in \{1,2\}$). An unsecured location $u$ can be secured by sending a legion to $u$  from an adjacent
location $v$. But a legion cannot be sent from a location $v$ if doing so leaves that location unsecured
(\textit{i.e.} if $f(v) = 1$). Thus, two legions must be stationed at a location ($f(v) = 2$) before
one of the legions can be sent to an adjacent location. A RDF of weight $\gamma_R(G)$ corresponds to such an optimal assignment of legions to locations.

Henning and Hedetniemi \cite{MR1991720} explored the potential of saving sustantial costs of maintaining legions, while still defending the Roman Empire (from a single attack). They proposed the use of \textit{weak Roman dominating functions} (WRDF) as follows. Using the terminology introduced earlier,
they defined a location to be \textit{undefended} if the location and every location adjacent to it are unsecured (\textit{i.e}., have no legion stationed there). Since an undefended location
is vulnerable to an attack, we require that every unsecured location be adjacent to a secure location in such a way that the movement of a legion from the secure location to the unsecured location does not create an undefended location. Hence every unsecured location can be defended without creating an undefended location. Such a placement of legions corresponds to a WRDF and a minimum such placement of legions corresponds to a minimum WRDF. More formally, a vertex $v\in V(G)$ is \textit{undefended} with respect to a function 
$f: V(G)\longrightarrow \{0,1,2\}$ if  $f(v)=0$ and $f(u)=0$ for every vertex $u$ adjacent to $v$. We call the function $f$ a WRDF if for every $v$ such that $f(v)=0$ there exists a vertex $u$ adjacent to $v$ such that $f(u)\in  \{1,2\}$ and the function $f': V(G)\longrightarrow \{0,1,2\}$ defined by $f'(v)=1$, $f'(u)=f(u)-1$ and $f'(z)=f(z)$ for every $z\in V(G)\setminus\{u,v\}$, has no undefended vertices. 

Let $f$ be a WRDF on $G$ and let $V_0$, $V_1$ and $V_2$  be the subsets of vertices assigned  the values $0$, $1$, and $2$, respectively, under $f$. Notice that there is a one-to-one correspondence between the set  of weak Roman dominating functions  $f$ and the set of ordered partitions  $(V_0,V_1,V_2)$ of $V(G)$. Thus, in order to specify the partition of $V(G)$ associated to $f$,  the function $f$ will be denoted by  $f(V_0,V_1,V_2)$.

The \textit{weak Roman domination number}, denoted by  $\gamma_r(G)$, is the minimum weight among all weak Roman dominating functions on $G$, \textit{i.e.}, $$\gamma_r(G)=\min\{w(f):\, f \text{ is a WRDF on } G\}.$$
A WRDF of weight  $\gamma_r(G)$ is called a $\gamma_r(G)$-function.  For instance, for the tree shown in Figure \ref{FigWeakRoman}  a $\gamma_r(G)$-function can place $2$ legions in the vertex of degree three and one legion in the other black-coloured vertex. Notice that $\gamma_r(G)=3<4=\gamma_R(G)$.

\begin{figure}[htb]
\begin{center}
\begin{tikzpicture}
[line width=1pt, scale=0.8]

\coordinate (V1) at (1,0);
\coordinate (V2) at (2,0);
\coordinate (V3) at (3,0);
\coordinate (V4) at (4,0);
\coordinate (V5) at (1,-1);
\coordinate (V6) at (1,1);

\draw[black!40]  (V1)--(V2)--(V3)--(V4);
\draw[black!40]  (V1)--(V5);
\draw[black!40]  (V1)--(V6);

\foreach \number in {1,...,6}{
\filldraw[gray]  (V\number) circle (0.08cm);
}
\foreach \number in {1,3}{
\filldraw[black]  (V\number) circle (0.08cm);
}

\node [left] at (1,0) {$2$};
\node [above] at (3,0) {$1$};
\end{tikzpicture}
\hspace{1cm}
\begin{tikzpicture}
[line width=1pt, scale=0.8]

\coordinate (V1) at (1,0);
\coordinate (V2) at (2,0);
\coordinate (V3) at (3,0);
\coordinate (V4) at (4,0);
\coordinate (V5) at (1,-1);
\coordinate (V6) at (1,1);

\draw[black!40]  (V1)--(V2)--(V3)--(V4);
\draw[black!40]  (V1)--(V5);
\draw[black!40]  (V1)--(V6);

\foreach \number in {1,...,6}{
\filldraw[gray]  (V\number) circle (0.08cm);
}
\foreach \number in {1,4}{
\filldraw[black]  (V\number) circle (0.08cm);
}

\node [left] at (1,0) {$2$};
\node [above] at (4,0) {$1$};
\end{tikzpicture}
\end{center}
\vspace{-0,5cm}
\caption{Two placements of legions which correspond  to  two different weak Roman dominating functions on the same tree.}
\label{FigWeakRoman} 
\end{figure}
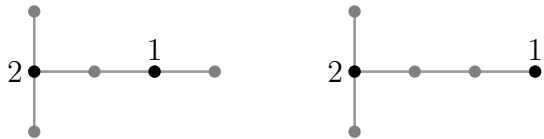

It was shown in \cite{MR1991720} that the problem of computing $\gamma_r(G)$ is NP-hard, even when restricted to bipartite or chordal graphs. This suggests finding
the weak Roman domination number for special classes of graphs or obtaining good bounds on this invariant. In this paper we develop the theory of weak Roman domination in lexicographic product graphs. 

The remainder of the paper is structured as follows. Section~\ref{SectionBasicWRDN} covers  basic results on the weak Roman domination number of a   graph. The case of lexicographic product graphs is studied in Sections~\ref{SectionLexicographicProduct}-\ref{Proofs}. Specifically, Section \ref{SectionLexicographicProduct} covers general bounds, Section  \ref{SectionClosedFormulae} covers closed formulae, while   Section \ref{Proofs} contains  the proof of Theorem \ref{pathweightfour}, which is very long. Finally,
in Section \ref{SectionOpenProblems} we collect some open problems  derived from this work.

Throughout the paper, we will use the notation $K_n$, $K_{1,n-1}$, $C_n$, $N_n$ and $P_n$ for complete graphs, star graphs, cycle graphs, empty graphs and path graphs of order $n$, respectively. We use the notation $u \sim v$ if $u$ and $v$ are adjacent vertices,  and $G \cong H$ if $G$ and $H$ are isomorphic graphs. For a vertex $v$ of a graph $G$, $N(v)$ will denote the set of neighbours or \emph{open neighbourhood} of $v$ in $G$, \emph{i.e.} $N(v)=\{u \in V(G):\; u \sim v\}$. The \emph{closed neighbourhood}, denoted by $N[v]$, equals $N(v) \cup \{v\}$.   We denote by $\delta(v)=|N(v)|$ the degree of vertex $v$, as well as $\delta=\min_{v \in V(G)}\{\delta(v)\}$ and $\Delta=\max_{v \in V(G)}\{\delta(v)\}$. The subgraph of $G$ induced by a set $S$ of vertices is denoted  by $\langle S\rangle$.  

 For the remainder of the paper, definitions will be introduced whenever a concept is needed.

\section{Some remarks on $\gamma_r(G)$}\label{SectionBasicWRDN}

In this section we will discuss some basic but useful remarks on the weak Roman domination number of a graph. 
For nonconnected graphs we have the following remark.

\begin{remark}\label{RemarkWeakRomanNonConnectedGraphs}
For any    graph $G$ of $k$ components, $G_1, G_2,\dots ,G_k$, $$\gamma_r(G)=\sum_{i=1}^k \gamma_r(G_k).$$
\end{remark}

According to the remark above,  we can restrict ourselves to the case of  connected graphs.

Recall that a set 
  $D\subseteq V(G)$  is {\it dominating} in $G$ if every vertex in $V(G)\setminus D$ 
has at
least one neighbour in $D$, \textit{i.e}., $N(u)\cap D\ne \emptyset$ for every $u\in V(G)\setminus D$. The {\it domination number} of $G$, denoted by  
$\gamma (G)$, is the minimum cardinality among all dominating sets in $G$. A dominating set of cardinality $\gamma(G)$ is called a $\gamma(G)$-set. The reader is referred to the books \cite{Haynes1998a,Haynes1998}
 for details on domination in graphs.

\begin{remark}{\rm \cite{MR1991720}} \label{DominationChain1}
For any graph $G$,
$$\gamma(G) \le \gamma_r(G)\le \gamma_R(G)\le 2\gamma(G).$$ 
\end{remark}

Graphs with $\gamma_R(G)= 2\gamma(G)$ are called \textit{Roman graphs} \cite{Cockayne2004}. We say that $G$ is a \textit{weak Roman graph} if $\gamma_r(G)= 2\gamma(G)$. 
 Notice that any weak Roman graph is a Roman graph. In general, the converse does not hold. For instance, the graph shown in Figure \ref{FigWeakRoman} is a Roman graph, as $\gamma_R(G)= 2\gamma(G)=4$, while $\gamma_r(G)=3$.

 A \textit{support vertex} of a tree is a vertex adjacent to a leaf, while a  \textit{strong support vertex} is a support vertex that is adjacent to more than one leaf.
 
\begin{lemma}{\rm \cite{MR1991720}}\label{LemmaStrongSupportVertex}
If $T$ is a tree with a unique $\gamma(T)$-set $S$, and if every vertex in $S$ is a  strong support vertex, then $T$ is a weak Roman tree. 
\end{lemma}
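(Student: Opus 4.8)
The inequality $\gamma_r(T)\le 2\gamma(T)$ is immediate from Remark~\ref{DominationChain1}, so the plan is to establish the reverse bound $\gamma_r(T)\ge 2\gamma(T)$. To this end I would take an arbitrary WRDF $f=f(V_0,V_1,V_2)$ on $T$ and show $w(f)\ge 2\gamma(T)=2|S|$ by a purely local argument around each vertex of $S$. For a vertex $x$, write $L(x)$ for the set of leaves of $T$ adjacent to $x$ and $f(L(x))=\sum_{\ell\in L(x)}f(\ell)$; by hypothesis $|L(s)|\ge 2$ for every $s\in S$.

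The central step is the claim that $f(s)+f(L(s))\ge 2$ for every $s\in S$, which I would prove by a short case analysis on $f(s)$. If $f(s)=2$ there is nothing to do. If $f(s)=0$, then since every $\ell\in L(s)$ has $N(\ell)=\{s\}$, a leaf of $s$ with value $0$ would be undefended; hence $f(\ell)\ge 1$ for all $\ell\in L(s)$, giving $f(L(s))\ge |L(s)|\ge 2$. The delicate case is $f(s)=1$: here I claim that at most one leaf of $s$ can belong to $V_0$. Indeed, if $\ell_1,\ell_2\in L(s)\cap V_0$ were distinct, then the only way to defend $\ell_1$ is to shift a legion from its unique neighbour $s$, which produces $f'$ with $f'(s)=0$, $f'(\ell_1)=1$; but then $\ell_2$, whose only neighbour is $s$, is undefended with respect to $f'$, contradicting that $f$ is a WRDF. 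Since $|L(s)|\ge 2$, at least one leaf of $s$ has value $\ge 1$, so $f(s)+f(L(s))\ge 1+1=2$.

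Finally I would assemble the bound. The sets $\{s\}\cup L(s)$, $s\in S$, are pairwise disjoint: a leaf is adjacent to a unique vertex, so it lies in at most one $L(s)$; and a strong support vertex has degree at least two, hence is never a leaf, so $s\notin L(s')$ whenever $s\ne s'$. Consequently $w(f)=\sum_{v\in V(T)}f(v)\ge\sum_{s\in S}\bigl(f(s)+f(L(s))\bigr)\ge 2|S|=2\gamma(T)$. Since $f$ was an arbitrary WRDF, this yields $\gamma_r(T)\ge 2\gamma(T)$, and combined with Remark~\ref{DominationChain1} we get $\gamma_r(T)=2\gamma(T)$, that is, $T$ is a weak Roman tree. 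The only step that genuinely needs care is the $f(s)=1$ subcase, where one must argue precisely with the shifted function $f'$; I also note that the argument uses only that $S$ is a minimum dominating set all of whose vertices are strong support vertices, so the uniqueness of $S$ is not actually required for this direction.
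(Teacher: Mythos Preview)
Your proof is correct. Note that the paper does not actually prove this lemma; it is quoted from \cite{MR1991720} without argument, so there is no in-paper proof to compare against. Your local counting is clean and complete: the case analysis on $f(s)$ is sound (the $f(s)=1$ subcase is handled exactly as it must be, via the shifted function $f'$), the sets $\{s\}\cup L(s)$ are pairwise disjoint because strong support vertices have degree at least two (hence are never leaves) and each leaf has a unique neighbour, and the final summation yields $w(f)\ge 2|S|=2\gamma(T)$.

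One remark on your closing observation. You are right that your argument never invokes the uniqueness of $S$, but in fact uniqueness is a \emph{consequence} of the other hypothesis rather than an independent assumption. Every strong support vertex lies in every $\gamma(T)$-set: if a dominating set $D$ omits a strong support vertex $s$ with pendant leaves $\ell_1,\dots,\ell_k$ ($k\ge 2$), then all $\ell_i$ must lie in $D$, and $(D\setminus\{\ell_1,\dots,\ell_k\})\cup\{s\}$ is a strictly smaller dominating set. Hence if some $\gamma(T)$-set $S$ consists entirely of strong support vertices and $S'$ is any $\gamma(T)$-set, then $S\subseteq S'$, forcing $S=S'$ by cardinality. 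So the uniqueness hypothesis in the lemma is redundant rather than merely unused.
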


The reader is refereed to \cite{MR1991720} for a complete characterization of all weak Roman forests. 

\begin{remark}\label{CaracterizationWRDN=1}
Let $G$ be a graph of order $n$. Then $\gamma_r(G)=1$ if and only if $G\cong K_n$.
\end{remark}

According to this remark, for any noncomplete graph $G$ we have that $\gamma_r(G)\ge 2$. Before discussing the limit case of this trivial bound, we need to introduce some additional notation and terminology. A set $S\subseteq V(G)$ is a \emph{secure dominating set} of $G$  if  $S$ is a dominating set and for every $v\in V(G)\setminus S$ there exists $u\in S\cap N(v)$  such that   $(S\setminus \{u\})\cup \{v\}$ is a dominating set \cite{MR2137919}.  The \emph{secure domination number}, denoted by $\gamma_s(G)$,  is the minimum cardinality among all secure dominating sets. As observed in \cite{MR2137919}, since for any secure dominating set $S$ we can construct a weak Roman dominating function $f(W_0,W_1,W_2)$,  where $W_0=V(G)\setminus S$, $W_1=S$ and $W_2= \emptyset$, we have that $\gamma_s(G)\ge \gamma_r(G).$

\begin{remark}\label{CaractWRDN=2}
Let $G$ be a noncomplete graph. The following statements are equivalent.
\begin{enumerate}[{\rm (i)}]
\item
 $\gamma_r(G)=2$.
\item $\gamma(G)=1$ or $\gamma_s(G)=2$.
\end{enumerate}
\end{remark}

\begin{proof}
 If  $\gamma_r(G)=2$, then 
  any $\gamma_r(G)$-function $f(X_0,X_1,X_2)$ has weight $w(f)=|X_1|+2|X_2|=2$, which implies that either $X_1=\emptyset$ and $|X_2|=1$ or $|X_1|=2$ and $X_2=\emptyset$. In the first case we have $\gamma(G)=1$, and in the second one $X_1$ is a secure dominating set, which implies that  $2=\gamma_r(G)\le \gamma_s(G)\le |X_1|=2$. Therefore, from (i) we deduce  (ii).
  
  Now, since $G$ is not a complete graph, $\gamma_r(G)\ge 2$.  Obviously, if $\gamma(G)=1$, then $\gamma_r(G)= 2$. On the other hand, if there exists a  secure dominating set of cardinality two, then $2\le \gamma_r(G)\le \gamma_s(G)\le 2$. Therefore, from (ii) we deduce (i).
  \end{proof}


Given a graph $G$ and an edge $e\in E(G)$, the graph obtained from $G$ by removing the edge $e$ will be denoted by $G-e$, \textit{i.e}.,  $V(G-e)=V(G)$ and $E(G-e)=E(G)\setminus \{e\}.$ Since any $\gamma_r(G-e)$-function is a WRDF for $G$, we deduce the following basic result.

\begin{remark}{\rm \cite{MR1991720}}\label{RemovingEdges}
For any spanning subgraph $H$ of a graph $G$,
$$\gamma_r(G)\le \gamma_r(H).$$
\end{remark}

\begin{proposition}{\rm \cite{MR1991720}} \label{cycles and pathas}
For any $n\ge 4$,$$\gamma_r(C_n)=\gamma_r(P_n)=\left\lceil\frac{3n}{7}\right\rceil.$$
\end{proposition}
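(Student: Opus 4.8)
The plan is to prove the formula $\gamma_r(C_n)=\gamma_r(P_n)=\lceil 3n/7\rceil$ for $n\ge 4$ by establishing matching upper and lower bounds, treating the two graph families together as much as possible. Since $P_n$ is a spanning subgraph of $C_n$, Remark~\ref{RemovingEdges} gives $\gamma_r(C_n)\le\gamma_r(P_n)$ for free, so it suffices to exhibit a WRDF of weight $\lceil 3n/7\rceil$ on $P_n$ (upper bound) and to prove $\gamma_r(C_n)\ge\lceil 3n/7\rceil$ (lower bound); these two together pinch both invariants to the common value.

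For the upper bound on $P_n$, the idea is a periodic block construction with period $7$. On each consecutive block of $7$ vertices $v_1,\dots,v_7$ I would assign the pattern $(0,1,0,0,2,0,0)$, i.e. weight $3$ per $7$ vertices; one checks that every $0$-vertex in the interior of such a tiling is adjacent to a $1$ or a $2$, and that after any single defensive move (shifting one legion from the $2$ to an adjacent $0$, or the forced move for a vertex adjacent only to a $1$) no vertex becomes undefended — this is a finite local verification on the window around each block boundary. For $n$ not divisible by $7$, one handles the residue $r=n\bmod 7$ by adjusting the last (partial) block; a short case analysis on $r\in\{0,1,\dots,6\}$ shows the total weight is exactly $\lceil 3n/7\rceil$, with the endpoints of the path handled by shifting the pattern so that $v_1$ or $v_n$ is assigned a positive value when needed. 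I would present one explicit good pattern for each residue class rather than grinding all moves.

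For the lower bound $\gamma_r(C_n)\ge\lceil 3n/7\rceil$, let $f(V_0,V_1,V_2)$ be a $\gamma_r(C_n)$-function with $V(C_n)=\{v_0,\dots,v_{n-1}\}$ indexed cyclically. The key structural fact is that $f$ cannot leave long stretches of $0$'s: a vertex in $V_0$ must have a neighbour in $V_1\cup V_2$, and the "no undefended vertex after the move" condition further restricts how $0$'s, $1$'s and $2$'s can be arranged consecutively. Concretely, I would argue that one cannot have two consecutive $1$-vertices each flanked by $0$'s on the outside in a way that produces too light a configuration, and more importantly that between two consecutive positive-valued vertices there can be at most a bounded number of $0$'s with the defense property holding. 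The clean way to make this rigorous is a discharging / weighting argument: assign to each vertex $v$ a charge $c(v)=f(v)$, and show one can redistribute charge so that every vertex ends with charge at least $3/7$, giving $w(f)=\sum f(v)\ge 3n/7$, hence $\ge\lceil 3n/7\rceil$ since $w(f)$ is an integer. The redistribution sends charge from each positive vertex to the nearby $0$-vertices it "defends"; the constraint that moving a legion leaves no undefended vertex is exactly what caps the number of $0$-vertices that must draw on a given positive vertex, so that $3/7$ per vertex is achievable.

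The main obstacle is the lower bound, specifically pinning down exactly which local patterns of $(V_0,V_1,V_2)$ along the cycle are admissible for a WRDF — the defense condition is a second-order constraint (it refers to the graph after a move), so the case analysis of "what can appear in a window of $k$ consecutive vertices" is delicate and is the part most prone to missed cases. I would organize it by first proving a lemma: in a $\gamma_r(C_n)$-function, every maximal run of consecutive $0$-vertices has length at most $3$, and a run of length $2$ or $3$ forces its neighbouring positive vertices to carry enough weight (e.g. a run of three $0$'s cannot be defended by two flanking $1$'s). With that lemma in hand, an averaging over the cyclic sequence of runs gives weight per vertex at least $3/7$, and the path case follows because removing an edge only increases $\gamma_r$ (Remark~\ref{RemovingEdges}) while the upper-bound construction already matches. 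A good sanity check throughout is $n=7$: the pattern $(0,1,0,0,2,0,0)$ has weight $3=\lceil 21/7\rceil$, and $n=4,5,6$ give $\lceil 12/7\rceil=2,\lceil15/7\rceil=3,\lceil18/7\rceil=3$, all realizable by small explicit functions, which also confirms the boundary handling of the construction.
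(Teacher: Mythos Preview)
The paper does not give its own proof of this proposition; it is quoted from Henning and Hedetniemi \cite{MR1991720} without argument, so there is nothing in the present paper to compare your attempt against. Your overall architecture --- construct a period-$7$ WRDF on $P_n$, invoke Remark~\ref{RemovingEdges} for $\gamma_r(C_n)\le\gamma_r(P_n)$, and prove $\gamma_r(C_n)\ge\lceil 3n/7\rceil$ by a local averaging argument --- is sound and matches the shape of the original source.

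There are, however, two concrete errors in your details. First, the pattern $(0,1,0,0,2,0,0)$ is not a WRDF at all: on $C_7$ the vertex $v_7$ has both neighbours $v_6$ and $v_1$ equal to $0$, so $v_7$ is undefended before any legion moves; your sanity check at $n=7$ therefore already fails. When the pattern is tiled on $P_{7k}$, the vertices $v_6,v_7,v_8$ form three consecutive zeros, so the failure persists. A correct weight-$3$ pattern on $C_7$ is, for instance, $(1,0,1,0,1,0,0)$, and you would need to rebuild the residue-class constructions from a valid base block. Second, your proposed lemma that maximal runs of $0$-vertices have length at most $3$ is off by one: since $V_1\cup V_2$ must dominate, three consecutive zeros already leave the middle vertex with no positive neighbour, so the correct bound is at most $2$. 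This is not cosmetic --- the $3/7$ discharging rests precisely on combining the run-length bound $\le 2$ with the further observation that an isolated $1$ cannot sit between two runs of two zeros (moving its legion to either side leaves the far zero undefended); with the wrong run-length bound the averaging would not close.
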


By Remark \ref{RemovingEdges} and Proposition \ref{cycles and pathas} we deduce the following result.

\begin{theorem}
For any Hamiltonian graph $G$ of order $n\ge 4$,
$$\gamma_r(G)\le \left\lceil\frac{3n}{7}\right\rceil.$$
\end{theorem}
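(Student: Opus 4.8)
The plan is to derive this immediately as a corollary of the two results the paper has just stated. A Hamiltonian graph $G$ of order $n \ge 4$ contains a spanning cycle $C_n$ as a subgraph; that is precisely the definition of Hamiltonicity. Since $C_n$ is a spanning subgraph of $G$, Remark \ref{RemovingEdges} applies directly with $H = C_n$, giving $\gamma_r(G) \le \gamma_r(C_n)$. Then Proposition \ref{cycles and pathas} evaluates the right-hand side: for $n \ge 4$ we have $\gamma_r(C_n) = \lceil 3n/7 \rceil$. Chaining these two inequalities yields $\gamma_r(G) \le \lceil 3n/7 \rceil$, which is the claim.

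Concretely, the proof is essentially one line. First I would recall that $G$ Hamiltonian means $G$ has a Hamilton cycle, i.e., a cycle on all $n$ vertices, so this cycle is a spanning subgraph $H \cong C_n$ of $G$ (here we only need the cycle's edges, discarding any chords of $G$). Second, I would invoke Remark \ref{RemovingEdges}, which says $\gamma_r(G) \le \gamma_r(H)$ for any spanning subgraph $H$ — the underlying reason being that a weak Roman dominating function of the sparser graph $H$ remains one for $G$, since adding edges only helps defenders. Third, I would substitute $\gamma_r(H) = \gamma_r(C_n) = \lceil 3n/7 \rceil$ from Proposition \ref{cycles and pathas}. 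That completes the argument.

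There is essentially no obstacle here; the statement is flagged in the excerpt as following ``by Remark \ref{RemovingEdges} and Proposition \ref{cycles and pathas}.'' If one wanted to be scrupulous, the only point worth a moment's care is the direction of the monotonicity inequality: removing edges can only increase (or keep equal) the weak Roman domination number, so the spanning subgraph with fewer edges has the larger parameter, and hence $C_n$ supplies an \emph{upper} bound for $G$ — not a lower one. Given that $C_n$ is among the sparsest spanning subgraphs of a Hamiltonian graph that one can write down with a clean formula, this is the natural choice of $H$. One might also remark that the bound is tight, being attained when $G = C_n$ itself, though the statement as given does not ask for this.

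\begin{proof}
Since $G$ is Hamiltonian, it contains a Hamilton cycle, that is, a spanning subgraph $H$ with $H\cong C_n$. By Remark~\ref{RemovingEdges} we have $\gamma_r(G)\le \gamma_r(H)=\gamma_r(C_n)$, and since $n\ge 4$, Proposition~\ref{cycles and pathas} gives $\gamma_r(C_n)=\left\lceil \frac{3n}{7}\right\rceil$. Combining these, $\gamma_r(G)\le \left\lceil \frac{3n}{7}\right\rceil$.
\end{proof}
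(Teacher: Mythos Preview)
Your proof is correct and follows exactly the approach indicated in the paper, which simply states that the result is deduced from Remark~\ref{RemovingEdges} and Proposition~\ref{cycles and pathas}. Your write-up supplies precisely the one-line argument the paper leaves implicit.
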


Obviously, the bound above is tight, as it is achieved for $G\cong C_n$.

\section{Preliminary results on   lexicographic product   graphs}\label{SectionLexicographicProduct}

Let $G$ and $H$ be two graphs.  The \emph{lexicographic product} of $G$ and $H$ is the graph $G \circ H$ whose vertex set is  $V(G \circ H)=  V(G)  \times V(H )$ and $(u,v)(x,y) \in E(G \circ H)$ if and only if $ux \in E(G)$ or $u=x$ and $vy \in E(H)$.     
Notice that  for any $u\in V(G)$  the subgraph of $G\circ H$ induced by $\{u\}\times V(H)$ is isomorphic to $H$. For simplicity, we will denote this subgraph by $H_u$, and if a vertex of $G$ is denoted by $u_i$, then  the referred  subgraph will be denoted by $H_i$.
For any $u \in V(G)$ and any WRDF $f$ on $G\circ H$ we define $$f(H_u)=\sum_{v\in V(H)}f(u,v) \text{ and }f[H_u]=\sum_{x\in N[u]}f(H_x).$$

\begin{remark} \label{RemarkConnectedLexic}
Let $G$ and $H$ be two graphs. The following assertions hold.

\begin{itemize}
\item $G\circ H$ is connected if and only if   $G$ is connected.
\item  If $G=G_1\cup \ldots \cup G_t,$ then $G\circ H= (G_1\circ H) \cup \ldots \cup (G_t \circ H).$ 
\end{itemize}
\end{remark}

From Remarks \ref{RemarkWeakRomanNonConnectedGraphs} and \ref{RemarkConnectedLexic} we deduce the following result.

\begin{remark}
For any  graph $G$ of components  $G_1, G_2,\dots ,G_k$ and  any graph $H$, $$\gamma_r(G\circ H)=\sum_{i=1}^k \gamma_r(G_i\circ H).$$
\end{remark}


For basic properties of the lexicographic product of two graphs we suggest the  books  \cite{Hammack2011,Imrich2000}.
A main problem in the study of product of graphs consists of finding exact values or sharp
bounds for specific parameters of the product of two graphs and express
these in terms of invariants of the factor graphs. In particular,   we cite the following works on domination theory of lexicographic product graphs. For instance, the domination number was studied in \cite{MR3363260,Nowakowski1996}, the Roman domination number was studied in \cite{SUmenjak:2012:RDL:2263360.2264103}, the rainbow domination number was studied in \cite{MR3057019}, the super domination number was studied in \cite{Dettlaff-LemanskaRodrZuazua2017}, while the doubly connected domination number was studied in \cite{MR3200151}.

To begin our analysis we would point out the following result, which is a direct consequence of Remark    \ref{RemovingEdges}.

\begin{remark}
Let $G$ be a connected graph of order $n$ and let $H$ be a nonempty graph. For any spanning subgraph $G_1$  of $G$, 
 $$\gamma_r(K_n\circ H) \le \gamma_r(G\circ H)\le \gamma_r(G_1\circ H).$$
 In particular, if $G$ is a Hamiltonian graph, then $$\gamma_r(G\circ H)\le \gamma_r(C_n\circ H).$$
\end{remark} 

\subsection{Upper bounds on $\gamma_r(G\circ H)$}

A \emph{total dominating set}  of a graph $G$ with no isolated vertex is a set $S$  of vertices of $G$ such that every vertex in $V(G)$ is adjacent to at least one vertex in $S$.    The\emph{ total domination number} of $G$, denoted by $\gamma_t(G)$, is the cardinality of a smallest total dominating set, and we refer to such a set as a $\gamma_t(G)$-set. Notice that for any graph $G$ with no isolated vertex, 
\begin{equation}\label{DominationChain2}
\gamma_r(G)\le 2\gamma(G)\le 2\gamma_t(G).
\end{equation} 
The reader is referred to the book \cite{Henning2013} for details on total domination in graphs.
This book provides and explores the fundamentals of total domination in graphs.

Notice that if $D$ is a total dominating set  of $G$ and  $h\in V(H)$, then $D\times \{h\}$ is a dominating set of $G\circ H$, so that $\gamma(G\circ H)\le \gamma_t(G)$. Hence, from the first inequality in chain  \eqref{DominationChain2} we deduce the following theorem, which can also be derived from the inequality $\gamma_R(G\circ H)\le 2\gamma_t(G)$ observed in \cite{SUmenjak:2012:RDL:2263360.2264103} and the second inequality in Remark \ref{DominationChain1}.

\begin{theorem}\label{BoundTotalDomination}
If $G$ is a graph with no isolated vertex, then for any  graph   $H$, $$\gamma_r(G\circ H)\le 2\gamma_t(G).$$
\end{theorem}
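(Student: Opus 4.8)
The plan is to reduce the statement to the domination chain $\gamma_r(G)\le 2\gamma(G)$ recorded in Remark~\ref{DominationChain1}, by first establishing the auxiliary inequality $\gamma(G\circ H)\le\gamma_t(G)$ (which is exactly the observation made in the paragraph preceding the theorem). Since $G$ has no isolated vertex, $\gamma_t(G)$ is well defined, so this reduction makes sense.

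First I would fix a $\gamma_t(G)$-set $D$ and an arbitrary vertex $h\in V(H)$, and verify that $S:=D\times\{h\}$ is a dominating set of $G\circ H$. Let $(u,v)\in V(G\circ H)\setminus S$. Because $D$ is a \emph{total} dominating set of $G$, the vertex $u$ has a neighbour $w\in D$ in $G$; by the adjacency rule of the lexicographic product, $uw\in E(G)$ forces $(u,v)(w,h)\in E(G\circ H)$, and $(w,h)\in S$. Hence every vertex outside $S$ has a neighbour in $S$, so $\gamma(G\circ H)\le |S|=|D|=\gamma_t(G)$. Then I would simply conclude, via Remark~\ref{DominationChain1} applied to $G\circ H$, that $\gamma_r(G\circ H)\le 2\gamma(G\circ H)\le 2\gamma_t(G)$. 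Equivalently, one can exhibit a concrete weak Roman dominating function: taking a minimum dominating set $S'$ of $G\circ H$, the function $f(V_0,V_1,V_2)$ with $V_2=S'$, $V_1=\emptyset$, $V_0=V(G\circ H)\setminus S'$ is a Roman dominating function (every $0$-vertex is adjacent to a $2$-vertex), hence a WRDF, of weight $2|S'|\le 2\gamma_t(G)$.

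The argument is short, so there is no serious obstacle; the one point that must not be overlooked is that $D$ is required to be a \emph{total} dominating set rather than merely a dominating set. Indeed, if $D$ were only dominating, a vertex $u\in D$ having no neighbour inside $D$ would leave all vertices $(u,v)$ with $v\neq h$ undominated by $S$, and the construction would fail. It is precisely the totality of $D$ that repairs this gap, and it is also the reason the hypothesis that $G$ has no isolated vertex appears. No structural assumption on $H$ is used beyond selecting a single vertex of it, so no case analysis on $H$ is needed.
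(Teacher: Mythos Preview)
Your proposal is correct and follows essentially the same route as the paper: the paper also observes that $D\times\{h\}$ is a dominating set of $G\circ H$ for any $\gamma_t(G)$-set $D$ and any $h\in V(H)$, giving $\gamma(G\circ H)\le\gamma_t(G)$, and then invokes the chain $\gamma_r\le 2\gamma$ from Remark~\ref{DominationChain1}. Your added remark about why totality (rather than mere domination) of $D$ is needed is accurate and worth keeping.
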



The total domination number of a path is known and is easy to compute. For every integer $n\ge 3$ we have $\gamma_t(P_n)=\lfloor n/2 \rfloor+\lceil n/4\rceil-\lfloor n/4\rfloor$. We will show in Corollary \ref{CorollaryPaths} that if $\gamma(H)\ge 4$, then $\gamma_r(P_n\circ H)=2\gamma_t(P_n)$.
Thus, the bound above is tight.  Furthermore,
as we will show in Proposition \ref{PropStarsTimesH},  if $n\ge 3$ and $\gamma(H)\ge 4$, then $\gamma_r(K_{1,n}\circ H)=4=2\gamma_t(K_{1,n})$. Notice that $K_{1,n}$ is a graph of diameter two and minimum degree $\delta=1$. In general, for any graph $G$ of diameter two and minimum degree $\delta$, the total domination number of $G$ is bounded above by $\delta +1$. Moreover, if $G$ is a  graph  of  order $n$
with  no  isolated  vertex and maximum degree $\Delta\ge n-2$, then $\gamma_t(G)=2.$ Therefore, the following result is a direct consequence of Theorem \ref{BoundTotalDomination}.

\begin{corollary}\label{BoundDiameterTwo}
The following assertions hold for any graph $H$.
\begin{itemize}
\item If $G$ is a  graph  of  order $n$
with  no  isolated  vertex and maximum degree $\Delta\ge n-2$, then $\gamma_r(G\circ H)\le 4.$
\item If $G$ has diameter two and minimum degree $\delta$, then
$\gamma_r(G\circ H)\le 2(\delta +1).$
\end{itemize}
\end{corollary}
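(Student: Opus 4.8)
The plan is to obtain both assertions as immediate consequences of Theorem~\ref{BoundTotalDomination}, which states that $\gamma_r(G\circ H)\le 2\gamma_t(G)$ for any graph $G$ with no isolated vertex and any graph $H$. So for each of the two items it suffices to bound $\gamma_t(G)$ by exhibiting a suitable small total dominating set of $G$.

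For the first item, I would fix a vertex $v$ of $G$ with $\delta(v)=\Delta\ge n-2$. Then $v$ fails to be adjacent to at most one vertex $w$. If such a $w$ exists, then since $G$ has no isolated vertex $w$ has a neighbour $u$, and because $u\ne v$ (as $v\not\sim w$) and $u\ne w$ we get $u\sim v$. I claim $S=\{u,v\}$ is a total dominating set: $u$ and $v$ are adjacent, so each has a neighbour in $S$; every vertex of $V(G)\setminus\{u,v,w\}$ is adjacent to $v\in S$; and $w$ is adjacent to $u\in S$. If no such $w$ exists, $v$ is universal and $\{u,v\}$ is a total dominating set for any neighbour $u$ of $v$. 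In either case $\gamma_t(G)\le 2$, so Theorem~\ref{BoundTotalDomination} gives $\gamma_r(G\circ H)\le 2\gamma_t(G)\le 4$.

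For the second item, I would take $v$ to be a vertex of minimum degree $\delta$; since a graph of diameter two is connected with at least two vertices, $\delta\ge 1$, so $N(v)\ne\emptyset$. I claim $S=N[v]$, which has $\delta+1$ vertices, is a total dominating set: $v$ has a neighbour in $N(v)\subseteq S$; each $x\in N(v)$ has the neighbour $v\in S$; and every vertex $z\in V(G)\setminus N[v]$ is at distance exactly two from $v$, hence lies on a path $v\,x\,z$ with $x\in N(v)\subseteq S$, so $z$ has a neighbour in $S$. Thus $\gamma_t(G)\le\delta+1$, and Theorem~\ref{BoundTotalDomination} yields $\gamma_r(G\circ H)\le 2(\delta+1)$.

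I do not expect any genuine obstacle here: the entire content is the construction of the two total dominating sets $\{u,v\}$ and $N[v]$, plus an appeal to Theorem~\ref{BoundTotalDomination}. The only points needing a moment of care are the degenerate case $\Delta=n-1$ in the first item (where the vertex $w$ does not exist) and the remark that $\delta\ge 1$ in the second item so that $N(v)$ is nonempty.
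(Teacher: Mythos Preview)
Your proof is correct and follows exactly the approach the paper indicates: the paper states (without details) that $\gamma_t(G)=2$ when $\Delta\ge n-2$ and that $\gamma_t(G)\le\delta+1$ when $G$ has diameter two, and then invokes Theorem~\ref{BoundTotalDomination}. You have simply supplied the explicit total dominating sets $\{u,v\}$ and $N[v]$ that justify those two bounds.
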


It was shown in \cite{Cockayne1980} that for any connected graph of order $n\ge 3$, $\gamma_t(G)\le \frac{2}{3}n$. Hence,
Theorem \ref{BoundTotalDomination} leads to the following result.

\begin{corollary}\label{UpperBound2n/3}
For any connected graph  $G$   of order $n\ge 3$ and any graph $H$,
$$\gamma_r(G\circ H)\le 2\left\lfloor\frac{2n}{3}\right\rfloor.$$
\end{corollary}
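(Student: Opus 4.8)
The plan is to combine Theorem~\ref{BoundTotalDomination} with the classical Cockayne--Dawes--Hedetniemi bound on the total domination number. Concretely, I would first invoke the result of \cite{Cockayne1980}, which states that every connected graph $G$ of order $n\ge 3$ satisfies $\gamma_t(G)\le \frac{2n}{3}$. Since $\gamma_t(G)$ is an integer, this immediately upgrades to $\gamma_t(G)\le \left\lfloor \frac{2n}{3}\right\rfloor$.

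Next I would apply Theorem~\ref{BoundTotalDomination}: because $G$ is connected of order $n\ge 3$, it has no isolated vertex, so for any graph $H$ we have $\gamma_r(G\circ H)\le 2\gamma_t(G)$. Chaining this with the floor bound from the previous step gives
$$\gamma_r(G\circ H)\le 2\gamma_t(G)\le 2\left\lfloor\frac{2n}{3}\right\rfloor,$$
which is exactly the claimed inequality.

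There is essentially no obstacle here: the statement is a two-line corollary, and the only subtlety is the harmless passage from $\frac{2n}{3}$ to $\left\lfloor\frac{2n}{3}\right\rfloor$, justified because the total domination number is integer-valued. One could also note that $G\circ H$ is connected (by Remark~\ref{RemarkConnectedLexic}) and nonempty, but this is not even needed for the upper bound. If desired, one might remark in passing that the bound is attained, for instance by $G\cong P_3$ together with the forthcoming Corollary~\ref{CorollaryPaths}, but strictly speaking the corollary as stated requires nothing beyond the two cited facts.
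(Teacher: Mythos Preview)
Your proof is correct and follows exactly the paper's approach: combine the Cockayne--Dawes--Hedetniemi bound $\gamma_t(G)\le \frac{2n}{3}$ with Theorem~\ref{BoundTotalDomination}, using the integrality of $\gamma_t(G)$ to insert the floor. The paper's own justification is just the one-line remark preceding the corollary, so there is nothing to add.
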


In Proposition \ref{PropositionPeines} we will show that the bound above is tight.


Chellali and Haynes \cite{Chellali2004} established
that the total domination number of a tree $T$ of order $n\ge 3$ is bounded above by $(n+s)/2$, where $s$ is the number of support vertices of  $T$. Therefore, Theorem \ref{BoundTotalDomination} leads to the following corollary.

\begin{corollary}
For any graph $H$ and any tree  $T$    of order $n\ge 3$ having $s$ support vertices,
$$\gamma_r(T\circ H)\le n+s.$$
\end{corollary}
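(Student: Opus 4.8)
The plan is to simply chain together the two ingredients already assembled in the text. By Theorem~\ref{BoundTotalDomination}, for any graph $H$ and any graph $G$ with no isolated vertex we have $\gamma_r(G\circ H)\le 2\gamma_t(G)$. Since a tree $T$ of order $n\ge 3$ is connected and has no isolated vertex, this applies with $G=T$, giving $\gamma_r(T\circ H)\le 2\gamma_t(T)$.

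Next I would invoke the Chellali--Haynes bound \cite{Chellali2004}, which states that a tree $T$ of order $n\ge 3$ with $s$ support vertices satisfies $\gamma_t(T)\le \tfrac{n+s}{2}$. Substituting this into the previous inequality yields $\gamma_r(T\circ H)\le 2\cdot\tfrac{n+s}{2}=n+s$, which is the claimed bound.

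There is essentially no obstacle here: the statement is a direct corollary, and the only things to check are the (trivial) hypotheses needed to apply each cited result — namely that $T$ has no isolated vertex so that $\gamma_t(T)$ is well defined, which holds since $n\ge 3$ and $T$ is connected. One might add, as the surrounding discussion suggests, a remark on sharpness: for a star $T\cong K_{1,n-1}$ one has a single support vertex ($s=1$), so the bound reads $\gamma_r(K_{1,n-1}\circ H)\le n$, and more refined constructions (referenced later via Proposition~\ref{PropStarsTimesH} and the corollaries on $P_n\circ H$) show the inequality $\gamma_r(G\circ H)\le 2\gamma_t(G)$ is attained, so no essential loss is incurred beyond that incurred by the Chellali--Haynes estimate itself.
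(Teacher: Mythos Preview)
Your proposal is correct and follows exactly the paper's own argument: the corollary is obtained by combining Theorem~\ref{BoundTotalDomination} with the Chellali--Haynes bound $\gamma_t(T)\le (n+s)/2$, precisely as you do. Your additional remarks on hypotheses and sharpness are fine but not needed for the proof itself.
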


The bound above is tight. For instance, Proposition \ref{PropositionPeines} shows that for any $n=3k$ and any graph $H$ with $\gamma(H)\ge 4$,  $\gamma_r(T_n\circ H)=n+s=4k$, where $T_n$ is a comb defined prior   to Proposition \ref{PropositionPeines}.

As stated by Goddard and Henning \cite{Goddard2002}, if $G$ is a planar graph with diameter two, then $\gamma_t(G)\le 3$. Hence,
as an immediate consequence of Theorem \ref{BoundTotalDomination}, we have the following result.

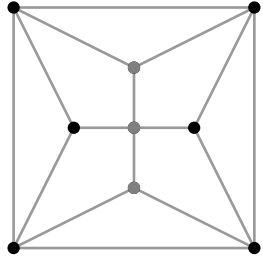
\begin{figure}[h]
\begin{center}
\begin{tikzpicture}
[line width=1pt, scale=0.8]

\coordinate (V1) at (3,0);
\coordinate (V2) at (3,-1);
\coordinate (V3) at (2,0);
\coordinate (V4) at (3,1);
\coordinate (V5) at (4,0);
\coordinate (V6) at (5,-2);
\coordinate (V7) at (1,-2);
\coordinate (V8) at (1,2);
\coordinate (V9) at (5,2);

\draw[black!40]  (V1)--(V2);
\draw[black!40]  (V1)--(V3);
\draw[black!40]  (V1)--(V4);
\draw[black!40]  (V1)--(V5);
\draw[black!40]  (V2)--(V6);
\draw[black!40]  (V2)--(V7);
\draw[black!40]  (V3)--(V7);
\draw[black!40]  (V3)--(V8);
\draw[black!40]  (V4)--(V8);
\draw[black!40]  (V4)--(V9);
\draw[black!40]  (V5)--(V6);
\draw[black!40]  (V5)--(V9);
\draw[black!40]  (V6)--(V7);
\draw[black!40]  (V7)--(V8);
\draw[black!40]  (V8)--(V9);
\draw[black!40]  (V9)--(V6);

\foreach \number in {1,...,9}{
\filldraw[black]  (V\number) circle (0.08cm);
}

\foreach \number in {1,2,4}{
\filldraw[gray]  (V\number) circle (0.08cm);
}

\end{tikzpicture}
\end{center}
\vspace{-0,5cm}
\caption{A planar graph of diameter two.}
\label{fig:g9} 
\end{figure}

\begin{corollary}\label{CorollaryPlanarDiameterTwo}
If $G$ is a planar graph of diameter two, then for any graph $H$,
$$\gamma_r(G\circ H)\le 6.$$
\end{corollary}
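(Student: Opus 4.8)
The plan is to read this off as a direct corollary of Theorem~\ref{BoundTotalDomination} together with the known upper bound on the total domination number of planar graphs of diameter two. First I would note that a graph $G$ of diameter two is necessarily connected and has no isolated vertex, so $\gamma_t(G)$ is well defined and Theorem~\ref{BoundTotalDomination} applies: $\gamma_r(G\circ H)\le 2\gamma_t(G)$ for every graph $H$. Next I would invoke the result of Goddard and Henning~\cite{Goddard2002}, which states that every planar graph $G$ of diameter two satisfies $\gamma_t(G)\le 3$. Chaining these two facts gives $\gamma_r(G\circ H)\le 2\gamma_t(G)\le 2\cdot 3=6$, which is the claimed bound.

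There is essentially no obstacle in this argument; the only thing to verify is that the hypotheses of the two cited results are met, and both follow at once from $G$ having diameter two (planarity is assumed outright, and diameter two forces connectedness and the absence of isolated vertices). In particular, no new combinatorial argument specific to the lexicographic product is required beyond what is already encapsulated in Theorem~\ref{BoundTotalDomination}, whose proof in turn rests on the observation that $D\times\{h\}$ is a dominating set of $G\circ H$ whenever $D$ is a total dominating set of $G$ and $h\in V(H)$.

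If desired, one could append a remark on tightness: the planar graph of diameter two in Figure~\ref{fig:g9} has $\gamma_t=3$, and for a graph $H$ with $\gamma(H)$ sufficiently large one expects $\gamma_r(G\circ H)=6$ to hold with equality. Proving the matching lower bound $\gamma_r(G\circ H)\ge 6$, however, would need the lower-bound techniques developed later in the paper (relating $\gamma_r(G\circ H)$ to invariants of $G$ and $H$) and is not needed for the corollary as stated, so I would leave the tightness discussion to the sections where those tools are available.
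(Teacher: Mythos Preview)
Your proposal is correct and follows exactly the paper's approach: the corollary is stated there as an immediate consequence of Theorem~\ref{BoundTotalDomination} together with the Goddard--Henning bound $\gamma_t(G)\le 3$ for planar graphs of diameter two. Your added observation that diameter two guarantees the hypotheses of Theorem~\ref{BoundTotalDomination} is a nice touch, and your tightness remark matches the paper's own discussion following the corollary.
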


The bound above is achieved, for instance, for the planar graph $G$ shown in Figure \ref{fig:g9} and any graph $H$ with $\gamma(H)\ge 4$. An optimum placement of legions in $G\circ H$ can be done by assigning two legions to the copies of $H$ corresponding to the gray-coloured vertices of $G$. 

\begin{corollary}\label{UpperBound2TimesDominationNumber}
For any graph $G$ with no isolated vertex and any  noncomplete graph $H$,
$$\gamma_r(G\circ H)\le  4 \gamma (G).$$
\end{corollary}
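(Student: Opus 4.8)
The plan is to derive this as a quick consequence of Theorem~\ref{BoundTotalDomination} together with the elementary inequality $\gamma_t(G)\le 2\gamma(G)$, valid for every graph $G$ with no isolated vertex. Since Theorem~\ref{BoundTotalDomination} already gives $\gamma_r(G\circ H)\le 2\gamma_t(G)$ for an arbitrary graph $H$, the whole task reduces to establishing $\gamma_t(G)\le 2\gamma(G)$.

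For that step I would take a $\gamma(G)$-set $D$ and, using that $G$ has no isolated vertex, fix for each $u\in D$ a neighbour $u'\in N(u)$. Setting $D^{*}=D\cup\{u':u\in D\}$ gives $|D^{*}|\le 2|D|=2\gamma(G)$, and $D^{*}$ is a total dominating set of $G$: each vertex outside $D$ has a neighbour in $D\subseteq D^{*}$ because $D$ is dominating, each $u\in D$ has the neighbour $u'\in D^{*}$, and each added vertex $u'$ has the neighbour $u\in D\subseteq D^{*}$. Hence $\gamma_t(G)\le |D^{*}|\le 2\gamma(G)$, and combining this with Theorem~\ref{BoundTotalDomination} yields $\gamma_r(G\circ H)\le 2\gamma_t(G)\le 4\gamma(G)$.

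If one prefers a self-contained argument avoiding the total domination number, the same set $D^{*}$ can be used directly: for any fixed $h\in V(H)$, the function assigning the value $2$ to every vertex of $D^{*}\times\{h\}$ and $0$ elsewhere is a Roman dominating function of $G\circ H$ of weight at most $4\gamma(G)$, since every vertex $(x,z)$ with value $0$ has, by the total domination property of $D^{*}$, a neighbour $y\in D^{*}$ of $x$ in $G$, so $(x,z)\sim(y,h)$ with value $2$; then $\gamma_r(G\circ H)\le\gamma_R(G\circ H)\le 4\gamma(G)$ by Remark~\ref{DominationChain1}.

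I do not expect a genuine obstacle here: the only point worth isolating is the passage $\gamma_t(G)\le 2\gamma(G)$ (and the requirement that $G$ have no isolated vertex, needed both for this inequality and to apply Theorem~\ref{BoundTotalDomination}). The hypothesis that $H$ be noncomplete is not essential to the inequality itself; it merely discards the case $H\cong K_n$, which is treated separately.
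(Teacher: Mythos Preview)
Your proposal is correct and follows exactly the same route as the paper: apply Theorem~\ref{BoundTotalDomination} and then the well-known bound $\gamma_t(G)\le 2\gamma(G)$ for graphs with no isolated vertex. The paper simply cites this inequality from \cite{Bollobas1979(c)} rather than proving it, but your short argument for it is fine, and your observation that the hypothesis on $H$ is not actually needed here is also accurate.
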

\begin{proof}
It is well known that for every  graph $G$ with no isolated vertex,$\gamma_t(G)\le 2\gamma(G)$ (see, for instance, \cite{Bollobas1979(c)}). Hence, by Theorem \ref{BoundTotalDomination} we have $\gamma_r(G\circ H)\le 4\gamma(G)$.
Therefore, the result follows.
\end{proof}

The bound $\gamma_r(G\circ H)\le 4 \gamma (G)$ is achieved, for instance, for graphs $G$ and $H$ satisfying the assumptions of Theorem \ref{ThGeneralizeThe Star}.

\begin{theorem}\label{WeakRomanTimesDominationNumber}   
For any graph $G$  and any  noncomplete graph $H$,
$$\gamma_r(G\circ H)\le   \gamma (G)\gamma_r(H).$$
\end{theorem}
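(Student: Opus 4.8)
The plan is to build a WRDF on $G \circ H$ by placing, on each copy $H_u$ of $H$, a scaled copy of an optimal WRDF of $H$, but only for those $u$ lying in a fixed minimum dominating set $D$ of $G$, and assigning $0$ to every copy $H_x$ with $x \notin D$. Concretely, fix a $\gamma(G)$-set $D$ with $|D| = \gamma(G)$ and fix a $\gamma_r(H)$-function $g(U_0, U_1, U_2)$ on $H$. Define $f$ on $G \circ H$ by $f(u, v) = g(v)$ if $u \in D$ and $f(u,v) = 0$ if $u \notin D$. Then $w(f) = \sum_{u \in D} w(g) = \gamma(G)\gamma_r(H)$, so it suffices to verify that $f$ is a WRDF.

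The verification splits according to where an undefended-looking vertex sits. First I would handle a vertex $(u, v)$ with $u \in D$ and $g(v) = 0$: since $G \circ H$ restricted to $\{u\} \times V(H)$ is a copy of $H$ and $f$ agrees with $g$ there, the defense of $v$ guaranteed by $g$ inside $H$ transfers verbatim to $(u,v)$ inside $H_u$ — the legion move happens entirely within $H_u$, the vertices outside $H_u$ are untouched, and no vertex of $H_x$ for $x \neq u$ becomes undefended because those copies' values are unchanged and were already fine (vertices in copies $H_x$ with $x \in D$ are internally defended, and we must double-check vertices in copies with $x \notin D$). The second and main case is a vertex $(u, v)$ with $u \notin D$, so $f(u,v) = 0$; here I use that $D$ dominates $G$, so there is $u' \in D$ with $u' \sim u$ in $G$, and then $(u,v)$ is adjacent in $G \circ H$ to \emph{every} vertex of $H_{u'}$. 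Since $H$ is noncomplete, $\gamma_r(H) \geq 2$, so $w(g) \geq 2$; hence either some vertex of $H_{u'}$ carries a $2$ under $f$, or at least two vertices of $H_{u'}$ carry a $1$. In the first subcase, move one legion from that weight-$2$ vertex $(u', w)$ to $(u,v)$: $(u',w)$ drops to $1$ (still secure), $(u,v)$ becomes $1$, and I must check nothing becomes undefended — the only vertices whose defense status could change are $(u,v)$ itself (now secure) and neighbours of $(u',w)$ that were relying on it, but those neighbours all lie in $N[u'] \times V(H)$, which still contains the newly-placed legion at $(u,v)$ or other positive values of $g$ on $H_{u'}$; I would also note that $(u, v)$ now being secure actually \emph{helps} its own neighbours. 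In the second subcase (two $1$'s in $H_{u'}$, no $2$), move one of them to $(u,v)$: the donor vertex drops to $0$, so I must ensure it stays defended — it has the other weight-$1$ vertex of $H_{u'}$ as a neighbour (if they were adjacent in $H$) or, failing that, it is still adjacent to $(u,v)$ which now has weight $1$ — and I must ensure every $x \notin D$ copy is still fine, which holds because each such copy's vertices see $u'$ or another dominator with unchanged positive copy-sum $\geq 1$.

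The step I expect to be the real obstacle is the second subcase just described: when $g$ puts no $2$ anywhere and the donated $1$ comes from a vertex $(u', w)$ that, after donation, might be adjacent (in $G \circ H$) only to other now-zero vertices. This forces a careful choice of which positive vertex of $H_{u'}$ to move — one should pick it so that it either retains a positive neighbour inside $H_{u'}$, or so that the freshly-placed $1$ at $(u,v)$ covers it, using $u \sim u'$ in $G$. It may be cleanest to argue that if $w(g) = 2$ with $U_2 = \emptyset$ and $|U_1| = 2$, then the two vertices of $U_1$ must dominate $H$ as an ordinary dominating set with the secure-domination property, which constrains their adjacency enough to make the move safe; alternatively, if $w(g) \geq 3$ one always has a positive vertex left in $H_{u'}$ after removing one unit. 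I would also remark at the end that the bound need not be compared with Theorem~\ref{WeakRomanTimesDominationNumber}'s companion bound $\gamma_r(G \circ H) \leq 4\gamma(G)$ since whichever of $\gamma_r(H)$ and $4$ is smaller gives the better estimate, but the point of this theorem is the dependence on $\gamma_r(H)$, which is sharp when $H$ has small weak Roman domination number.
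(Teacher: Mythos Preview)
Your construction is exactly the paper's: place a copy of a $\gamma_r(H)$-function on $H_u$ for each $u$ in a $\gamma(G)$-set and zero elsewhere, then compute the weight. The paper dispatches the verification that this is a WRDF with the phrase ``it is readily seen,'' whereas you spell it out; your argument is correct, and the ``obstacle'' you flag in the second subcase is not actually an obstacle --- since $u' \sim u$ in $G$, the donor $(u',w)$ is always adjacent in $G\circ H$ to the freshly secured vertex $(u,v)$, so no careful choice of donor is needed.
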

\begin{proof}
Let $f_1(V_0,V_1,V_2)$ be a $\gamma_r(H)$-function and $X$ a $\gamma(G)$-set. Notice that $\gamma_r(H)\ge 2$, as $H$ is not complete.
It is readily seen that $f(W_0,W_1,W_2)$ defined by  $W_1=X\times V_1$ and  $W_2=X\times V_2$ is a WRDF of $G\circ H$. Hence, 
$$\gamma_r(G\circ H)\le |X\times V_1|+2|X\times V_2|=|X|(|V_1|+2|V_2|)=\gamma (G)\gamma_r(H).$$
Therefore, the result follows.
\end{proof}

The bound  $\gamma_r(G\circ H)\le   \gamma (G)\gamma_r(H) $ is achieved, for instance, for any comb graph $T_{3k}$  defined prior  to Proposition \ref{PropositionPeines} and any graph $H$ with $\gamma_r(H) =4$. Besides, the bound is attained for any $G$ and $H$ satisfying the assumptions of Theorem \ref{EqualityWRDNandTwotimesDom}.

A \emph{double total dominating set}  of a graph $G$ with minimum degree greater than or equal to two is a set $S$  of vertices of $G$ such that every vertex in $V(G)$ is adjacent to at least two vertices in $S$, \cite{Henning2013}.    The \emph{double total domination number} of $G$, denoted by $\gamma_{2,t}(G)$, is the cardinality of a smallest double total dominating set, and we refer to such a set as a $\gamma_{2,t}(G)$-set.

\begin{theorem}\label{BoundDoubleDomination}
Let $G$ be a graph of minimum degree greater than or equal to two. The following assertions hold. 

\begin{enumerate}[{\rm (i)}]
\item $\gamma_r(G)\le \gamma_{2,t}(G).$

\item For any graph $H$, $\gamma_{2,t}(G\circ H)\le \gamma_{2,t}(G).$

\item For any graph $H$, $\gamma_{r}(G\circ H)\le \gamma_{2,t}(G).$
\end{enumerate}
\end{theorem}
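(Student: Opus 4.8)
The plan is to obtain part (iii) immediately from parts (i) and (ii) applied to $G\circ H$ itself. Since $G$ has minimum degree at least two, every vertex $(u,v)$ of $G\circ H$ has degree $\delta_G(u)\,|V(H)|+\delta_H(v)\ge 2|V(H)|\ge 2$, so $G\circ H$ again has minimum degree at least two and (i) applies to it; together with (ii) this yields $\gamma_r(G\circ H)\le \gamma_{2,t}(G\circ H)\le \gamma_{2,t}(G)$. Hence the real content is in (i) and (ii), and I would prove those directly.

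For (i), I would take a $\gamma_{2,t}(G)$-set $S$ and let $f$ be the function with $f(z)=1$ for $z\in S$ and $f(z)=0$ otherwise, which has weight $|S|=\gamma_{2,t}(G)$; it remains to check that $f$ is a WRDF. Given $v$ with $f(v)=0$ (so $v\notin S$), the vertex $v$ has at least two neighbours in $S$; pick any $u\in S\cap N(v)$ and form $f'$ with $f'(v)=1$, $f'(u)=f(u)-1=0$, and $f'=f$ elsewhere. The only vertex whose value decreased is $u$, so the only candidates for undefended vertices of $f'$ are $u$ itself and vertices $w\notin S$ with $w\ne v$. The vertex $u$ has the neighbour $v$ with $f'(v)=1$, so it is defended. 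For such a $w$: since $S$ is a double total dominating set, $w$ has at least two neighbours in $S$, at most one of which is $u$, hence at least one neighbour in $S\setminus\{u\}$, and that neighbour still carries value $1$ under $f'$; so $w$ is defended. Thus $f'$ has no undefended vertex, $f$ is a WRDF, and $\gamma_r(G)\le \gamma_{2,t}(G)$.

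For (ii), I would again take a $\gamma_{2,t}(G)$-set $S$, fix any $h\in V(H)$, and show that $S\times\{h\}$ is a double total dominating set of $G\circ H$. For an arbitrary vertex $(u,v)$ of $G\circ H$, the vertex $u$ has two distinct neighbours $x_1,x_2\in S$ in $G$; then $(x_1,h),(x_2,h)\in S\times\{h\}$ and both are adjacent to $(u,v)$ in $G\circ H$ because $ux_i\in E(G)$. Hence every vertex of $G\circ H$ has at least two neighbours in $S\times\{h\}$, giving $\gamma_{2,t}(G\circ H)\le |S\times\{h\}|=|S|=\gamma_{2,t}(G)$.

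I do not expect a serious obstacle here; the only step that requires genuine care is the verification of the ``no undefended vertex'' clause in (i), where it is essential that $S$ be a \emph{double} total dominating set rather than an ordinary one — the surviving second neighbour in $S$ is exactly what keeps a decremented vertex (and its neighbours) defended. The lexicographic-product bookkeeping in (ii) is then routine from the definition of adjacency in $G\circ H$.
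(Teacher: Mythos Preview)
Your proposal is correct and follows essentially the same approach as the paper: the same construction of the WRDF $f$ with $V_1=S$, $V_2=\emptyset$ for (i), the same set $S\times\{h\}$ for (ii), and the same chaining $\gamma_r(G\circ H)\le \gamma_{2,t}(G\circ H)\le \gamma_{2,t}(G)$ for (iii). You supply more detail than the paper does --- the explicit verification that $f'$ has no undefended vertex and the check that $G\circ H$ inherits minimum degree at least two --- but the argument is the same.
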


\begin{proof}
For every $\gamma_{2,t}(G)$-set $S$ we can define a WRDF $f(X_0,X_1,X_2)$ on $G$  by $X_0=V(G)\setminus S$, $X_1=S$ and $X_2=\emptyset$. Hence, (i) follows.

Now, let $D$ be a $\gamma_{2,t}(G)$-set  and $y\in V(H)$. Thus, for every $(x,y)\in V(G)\times V(H)$, there exist $a,b\in D\cap N(x)$, which implies that $(a,y),(b,y)\in (D\times \{y\})\cap N((x,y))$, and so $D\times \{y\}$ is a double total  dominating set of $G\circ H$. Hence, (ii) follows. 

Finally, from (i) and (ii) we deduce (iii), as
$\gamma_{r}(G\circ H) \le \gamma_{2,t}(G\circ H)\le  \gamma_{2,t}(G).$
\end{proof}



In order to show an example of graphs where $\gamma_{r}(G\circ H)= \gamma_{2,t}(G) $, we define the family $\mathcal{G}$    as follows.  A graph $G_{r,s}=(V,E)$ belongs to $\mathcal{G}$ if and only if  there exit two positive integers $r,s$ such that
$V=\{x_1,x_2,x_3,y_1,y_2,\dots, y_r,z_1,$ $z_2,\dots, z_s\}$ and $E=\{x_1y_i:\; 1\le i\le r\}\cup \{x_1z_i:\; 1\le i\le s\}\cup \{x_2y_i:\; 1\le i\le r\}\cup \{x_3z_i:\; 1\le i\le s\}\cup\{x_2x_3\}$. Figure \ref{The graph $G_{4,4}$} shows the graph $G_{4,4}$.

\begin{figure}[h]
\begin{center}
\begin{tikzpicture}
[line width=1pt,scale=0.8]

\foreach \number in {1,...,4}{
\coordinate (A\number) at (\number,0);}
\foreach \number in {6,...,9}{
\coordinate (B\number) at (\number,0);}

\foreach \number in {1,...,4}{
\draw[black!40] (4,-1)-- (A\number) -- (5,1);
}
\foreach \number in {6,...,9}{
\draw[black!40] (6,-1)-- (B\number) -- (5,1);
}
\draw[black!40]  (4,-1) -- (6,-1);

\foreach \number in {1,...,4}{
\filldraw[black]  (A\number) circle (0.08cm)  ;
}
\foreach \number in {6,...,9}{
\filldraw[black]  (B\number) circle (0.08cm)  ;
}
\filldraw[gray]  (5,1) circle (0.08cm)  ;
\filldraw[gray]  (4,0) circle (0.08cm)  ;
\filldraw[gray]  (6,0) circle (0.08cm)  ;
\filldraw[gray]  (4,-1) circle (0.08cm)  ;
\filldraw[gray]  (6,-1) circle (0.08cm)  ;

\node [above] at (5,1) {$x_1$};
\node [below] at (6,-1) {$x_3$};
\node [below] at (4,-1) {$x_2$};
\end{tikzpicture}
\end{center}
\vspace{-0,5cm}
\caption{The set of gray-coloured vertices is a double total dominating set of $G_{4,4}$.}\label{The graph $G_{4,4}$}
\end{figure}
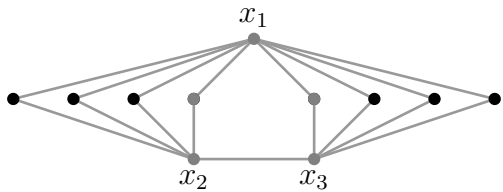

It is not difficult to check  that for any graph $G_{r,s}\in \mathcal{G}$  and any graph $H$ with $\gamma(H)\ge 3$ we have  $\gamma_r(G_{r,s}\circ H)= 5=\gamma_{2,t}(G_{r,s})$.

\begin{corollary}\label{CorollaryBound-n}
For any graph $H$ and any   graph  $G$    of order $n$ and minimum degree greater than or equal to two,  $$\gamma_r(G\circ H)\le n.$$
\end{corollary}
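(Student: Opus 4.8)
The plan is to deduce this directly from part (iii) of Theorem~\ref{BoundDoubleDomination}, which gives $\gamma_r(G\circ H)\le \gamma_{2,t}(G)$ whenever $\delta(G)\ge 2$. So it suffices to bound the double total domination number of $G$ itself by its order $n$.

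First I would observe that, under the hypothesis $\delta(G)\ge 2$, the full vertex set $V(G)$ is a double total dominating set of $G$: by definition of the minimum degree, every vertex $v\in V(G)$ has at least two neighbours in $G$, and those neighbours all lie in $V(G)$. Hence $\gamma_{2,t}(G)\le |V(G)|=n$. Combining this with Theorem~\ref{BoundDoubleDomination}(iii) yields
$$\gamma_r(G\circ H)\le \gamma_{2,t}(G)\le n,$$
which is the claimed bound, valid for every graph $H$ (no hypothesis on $H$ is needed because Theorem~\ref{BoundDoubleDomination}(iii) already holds for arbitrary $H$).

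There is essentially no obstacle here: the only point worth stating explicitly is the elementary fact that $\delta(G)\ge 2$ forces $V(G)$ to be a double total dominating set, and everything else is an immediate concatenation of inequalities already established. If one wanted to add value, the natural follow-up — not required by the statement — would be to exhibit graphs attaining equality, e.g.\ cycles $C_n$ with $3\le n$ and a suitable $H$, but for the corollary as stated the two-line argument above suffices.
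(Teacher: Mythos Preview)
Your proof is correct and is exactly the intended argument: the corollary is placed immediately after Theorem~\ref{BoundDoubleDomination} with no proof because it follows at once from part~(iii) together with the trivial observation that $V(G)$ itself is a double total dominating set when $\delta(G)\ge 2$. There is nothing to add.
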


As we will show in Corollary \ref{CorollaryCycles}, the bound above is tight. 

\subsection{Lower bounds on $\gamma_r(G\circ H)$}

In order to deduce our next result we need to state the following basic lemma.

\begin{lemma}\label{Image-one-copy-H}
Let  $G$ be a graph and $H$ a  noncomplete graph. For any  $u\in V(G)$ and any  
$\gamma_r(G\circ H)$-function $f$, 
$$
f[H_u]=\sum_{x\in N[u]}f(H_x)\ge 2.$$
\end{lemma}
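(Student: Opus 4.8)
The plan is to show that if some copy $H_u$ and all copies $H_x$ with $x\in N(u)$ received, jointly, a total weight at most $1$ under a $\gamma_r(G\circ H)$-function $f$, then $f$ cannot be a WRDF, which contradicts the definition of a $\gamma_r(G\circ H)$-function. First I would observe that since $H$ is noncomplete, there exist two distinct vertices $v,w\in V(H)$ with $v\not\sim w$ in $H$. Consider the corresponding vertices $(u,v)$ and $(u,w)$ of $G\circ H$; they are not adjacent in $G\circ H$, and every common neighbour or neighbour of either of them that lies outside $\{u\}\times V(H)$ must have first coordinate in $N(u)$.

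Next I would split into cases according to how the at-most-$1$ weight on $\bigcup_{x\in N[u]}V(H_x)$ is distributed. If $f[H_u]=0$, then every vertex of $\{u\}\times V(H)$ has $f$-value $0$ and all its neighbours also have $f$-value $0$ (neighbours are either in $H_u$ or in copies $H_x$ with $x\in N(u)$, all of which are assigned $0$); hence $(u,v)$ is an undefended vertex of $f$ itself, so $f$ is not even a valid WRDF — a contradiction. If $f[H_u]=1$, then exactly one vertex $z$ in $\bigcup_{x\in N[u]}V(H_x)$ has $f(z)=1$ and all others there have value $0$. I would then argue that at least one of $(u,v)$, $(u,w)$ — say $(u,v)$ — is not equal to $z$; since $f((u,v))=0$, the WRDF condition requires a neighbour $t$ of $(u,v)$ with $f(t)\in\{1,2\}$ and such that moving a legion from $t$ to $(u,v)$ leaves no undefended vertex. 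Every neighbour of $(u,v)$ lies in $\bigcup_{x\in N[u]}V(H_x)$, so the only candidate is $t=z$ with $f(z)=1$. But then the resulting function $f'$ has $f'(z)=0$, and I claim the other vertex $(u,w)$ (or, if $z=(u,w)$, some other zero-vertex of $H_u$, using that $|V(H)|\ge 2$ and in fact picking a vertex of $H$ distinct from the second coordinate of $z$ non-adjacent to it in $H$, which exists precisely because $H$ is noncomplete and of order $\ge 2$) becomes undefended under $f'$: it has $f'$-value $0$, and all of its neighbours (which all lie in $\bigcup_{x\in N[u]}V(H_x)$) have $f'$-value $0$ as well, since the only positive value was at $z$ and that has been zeroed out or moved to $(u,v)$, which is not adjacent to it. This contradicts the requirement that $f'$ have no undefended vertices, so $f$ is not a WRDF.

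The main obstacle I anticipate is the bookkeeping in the $f[H_u]=1$ subcase: one must carefully exhibit, after the defence move $f\to f'$, an actual undefended vertex, and this requires choosing the right pair of non-adjacent vertices in $H$ so that the unique positive-valued vertex $z$ cannot simultaneously be used to defend both of them — this is exactly where noncompleteness of $H$ is essential, since in a complete $H$ a single legion in a copy could conceivably patrol everything. I would handle this by noting that whichever vertex $z$ is, among the (at least two) vertices of $\{u\}\times V(H)$ there is one, call it $p$, with $f(p)=0$ and $p$ not adjacent to $z$ in $G\circ H$: if $z\notin\{u\}\times V(H)$ this is any vertex of $H_u$ whose second coordinate is non-adjacent in $H$ to... (more simply, any vertex of $H_u$, since $z$'s first coordinate is in $N(u)$ so $z\sim p$ in $G\circ H$ for all $p\in V(H_u)$ — wait, that makes them adjacent); so more care is needed: if $z\in V(H_x)$ with $x\in N(u)$ then $z$ is adjacent to all of $H_u$, so instead I defend $(u,v)$ using $z$ and check the vertex $(u,w)$: its neighbours outside $H_u$ all have first coordinate in $N(u)$, and the only positive value in that region was $z$; but $z$, having been decremented to $0$, no longer defends $(u,w)$, while $(u,v)$, now holding value $1$, is not adjacent to $(u,w)$ since $v\not\sim w$ in $H$ — so $(u,w)$ is undefended under $f'$. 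If instead $z\in V(H_u)$, pick $v,w$ to additionally be non-adjacent to the second coordinate of $z$ in $H$ (possible when $|V(H)|\ge 3$; and when $|V(H)|=2$, $H\cong N_2$ forces $f[H_u]\ge 2$ directly by inspection), and run the same argument. Having established the contradiction in every case, we conclude $f[H_u]\ge 2$, as claimed.
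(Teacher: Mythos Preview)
Your overall approach matches the paper's: suppose $f[H_u]\le 1$, exhibit an undefended vertex (either for $f$ itself or for the shifted function $f'$), and conclude that $f$ is not a WRDF. The case $f[H_u]=0$ and the subcase $z\in V(H_x)$ with $x\in N(u)$ are handled correctly.

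There is, however, a genuine gap in the subcase $z=(u,v_0)\in V(H_u)$. You assert that when $|V(H)|\ge 3$ one may re-choose $v,w$ so that both are non-adjacent to $v_0$ in $H$. This is false: take $H\cong P_3$ with $v_0$ the centre vertex; then $v_0$ is universal in $H$ and has no non-neighbour at all, so no such $v,w$ exist. The fix is simply to drop the extra requirement --- your original pair $v,w$ with $v\not\sim w$ already does the job. Indeed, if $v_0\in\{v,w\}$, say $v_0=w$, then $(u,v)$ has $f$-value $0$ and $z=(u,w)$ is \emph{not} a neighbour of $(u,v)$ (since $v\not\sim w$), so $(u,v)$ is already undefended under $f$. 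If $v_0\notin\{v,w\}$, then both $(u,v)$ and $(u,w)$ have $f$-value $0$; either $z\not\sim(u,v)$ and $(u,v)$ is undefended under $f$, or $z\sim(u,v)$ and the unique admissible move sends the legion from $z$ to $(u,v)$, after which $(u,w)$ is undefended under $f'$ exactly as in your $x\in N(u)$ argument, the only fact needed being $(u,v)\not\sim(u,w)$.
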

\begin{proof}
Suppose that  $f$ is  a $\gamma_r(G\circ H)$-function  and there exists $u\in V(G)$ such that  $f[H_u]\le 1$. If $f[H_u]=1$, then the placement of a legion in a non-universal vertex of $H_u$ produces undefended vertices, which is a contradiction. Now, if $f[H_u]=0$, then there are undefended vertices in $H_u$, which is a contradiction again. Therefore, the result follows. 
\end{proof}

A set $X\subseteq V(G)$ is  called a $2$-\textit{packing}
if $N[u]\cap N[v]=\emptyset $ for every pair of different vertices $u,v\in X$.
 The
$2$-\textit{packing number} $\rho(G)$ is the   cardinality
of any   largest $2$-packing of
$G$. A $2$-packing of cardinality $\rho(G)$ is called a $\rho(G)$-set.

\begin{theorem}\label{MainLowerBound}
For any   graph  $G$ of minimum degree $\delta\ge 1$ and any noncomplete graph $H$, $$\gamma_r(G\circ H)\ge \max \{\gamma_r(G), \gamma_t(G),2\rho(G)\}.$$
\end{theorem}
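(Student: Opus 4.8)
The plan is to establish the three lower bounds separately, each by transforming an optimal WRDF on $G\circ H$ into a suitable structure on $G$.

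\medskip

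\textbf{Bound $\gamma_r(G\circ H)\ge 2\rho(G)$.} First I would take a $\rho(G)$-set $P=\{u_1,\dots,u_{\rho(G)}\}$. Since $P$ is a $2$-packing, the closed neighbourhoods $N[u_1],\dots,N[u_{\rho(G)}]$ are pairwise disjoint, hence the vertex sets $\bigcup_{x\in N[u_i]}(\{x\}\times V(H))$ are pairwise disjoint for distinct $i$. For a $\gamma_r(G\circ H)$-function $f$, Lemma~\ref{Image-one-copy-H} gives $f[H_{u_i}]\ge 2$ for each $i$. Summing over $i$ and using disjointness,
$$\gamma_r(G\circ H)=w(f)\ge\sum_{i=1}^{\rho(G)}f[H_{u_i}]\ge 2\rho(G).$$

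\medskip

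\textbf{Bound $\gamma_r(G\circ H)\ge \gamma(G)$ and $\gamma_r(G\circ H)\ge \gamma_t(G)$.} Here the idea is to project a $\gamma_r(G\circ H)$-function $f$ down to $G$. Let $A=\{u\in V(G):f(H_u)\ge 1\}$, i.e.\ the set of copies of $H$ that receive at least one legion. Clearly $w(f)\ge |A|$, so it suffices to show $A$ is a total dominating set of $G$ (this gives both bounds at once, since $\gamma_t(G)\ge\gamma(G)$). Take any $u\in V(G)$; by Lemma~\ref{Image-one-copy-H}, $f[H_u]=\sum_{x\in N[u]}f(H_x)\ge 2$. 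If $u\notin A$ then $f(H_u)=0$, so $\sum_{x\in N(u)}f(H_x)\ge 2$, forcing some neighbour $x$ of $u$ with $f(H_x)\ge 1$, i.e.\ $x\in A$; so $u$ has a neighbour in $A$. The remaining case is $u\in A$: I need to show $u$ still has a neighbour in $A$. Suppose not, so $f(H_x)=0$ for all $x\in N(u)$. Then any vertex $(u,v)$ with $f(u,v)=0$ is undefended unless it has a neighbour inside $H_u$ carrying a positive value; moreover, when a legion is moved to defend such a vertex, the move must occur within $H_u$ (no help comes from neighbouring copies). One then argues that $f$ restricted to $H_u$ would have to be a WRDF of $H_u\cong H$ — but actually the cleanest route is: if $f(H_u)\ge 1$ and $N(u)$ contributes $0$, then since $H$ is noncomplete, $H_u$ has a non-universal vertex $(u,v)$ with $f(u,v)=0$, and defending it requires a legion movement internal to $H_u$ that leaves no undefended vertex in $H_u$; in particular $H_u$ carries a WRDF, hence $f(H_u)\ge\gamma_r(H)\ge 2$. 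So in every case where $u$ has no neighbour in $A$ we get $f(H_u)\ge 2$, and replacing $f$ on $H_u$ by a single legion on one vertex, together with the neighbours still contributing $0$, would... — more carefully, I would instead just note $f(H_u)\ge 2\ge 1$ keeps $u\in A$ but I still need a neighbour; so the honest statement is that $A$ as defined is dominating but perhaps not total, and for the total‑domination bound one argues that such an isolated-in-$A$ vertex $u$ can be handled by observing $f[H_u]\ge 2$ is concentrated on $H_u$, and reassigning one of those legions to a neighbouring copy $H_x$ produces a WRDF of no greater weight in which $x\in A$, so WLOG $A$ is total. I expect this last case analysis — showing one may assume the projected set is a genuine total dominating set — to be the main obstacle, since it requires checking that the legion reassignment does not create undefended vertices elsewhere.

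\medskip

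\textbf{Assembly.} Combining the three inequalities gives $\gamma_r(G\circ H)\ge\max\{\gamma_r(G),\gamma_t(G),2\rho(G)\}$; note $\gamma_r(G)\le\gamma_t(G)$ is \emph{not} needed — rather, for the $\gamma_r(G)$ part one can alternatively observe directly that the projection $\bar f$ on $G$ defined by $\bar f(u)=\min\{f(H_u),2\}$ (or a similar capping) is itself a WRDF of $G$ of weight at most $w(f)$, using that a legion movement defending $(u,v)$ within a copy projects to a legitimate movement or to no movement at all in $G$. The genuinely delicate point, as above, is verifying the ``no undefended vertex'' clause survives projection; I would isolate it as a small internal claim and prove it by direct inspection of the three possible values of $f(H_u)$ relative to $f[H_u]$.
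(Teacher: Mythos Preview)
Your argument for the bound $\gamma_r(G\circ H)\ge 2\rho(G)$ is correct and matches the paper exactly. Your projection $\bar f(u)=\min\{f(H_u),2\}$ for the bound $\gamma_r(G\circ H)\ge\gamma_r(G)$ is also the paper's approach; the verification that $\bar f$ is a WRDF on $G$ is routine once one observes that a defending move $(x',y')\to(x,y)$ in $G\circ H$ with $f(H_x)=0$ necessarily has $x'\in N(x)$, and that after the projected move $x'\to x$ every vertex $z$ with $\bar f'(z)=0$ still has a neighbour $w$ with $f'(H_w)\ge 1$, hence $\bar f'(w)\ge 1$.

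For the total-domination bound your route diverges from the paper, and the step you flag as ``the main obstacle'' is exactly the one the paper avoids. Your plan is to modify $f$ by shipping a legion from $H_u$ into a neighbouring copy whenever $u$ is isolated in $A=\{u:f(H_u)\ge 1\}$, and then to re-verify the WRDF property; you have not carried out that verification, and it is genuinely delicate (one must check that vertices in copies $H_z$ with $z\in N(u)\setminus\{x\}$ remain defensible after the legion at $(u,a)$ is depleted). The paper sidesteps this entirely with a pure counting argument that never alters $f$: set $U_2^*=\{u\in A:\, N(u)\cap A=\emptyset\}$; by Lemma~\ref{Image-one-copy-H} every such $u$ has $f(H_u)\ge 2$. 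Using $\delta\ge 1$, choose for each $u\in U_2^*$ one neighbour in $V(G)\setminus A$, obtaining $U_0^*$ with $|U_0^*|\le|U_2^*|$. Then $A\cup U_0^*$ is a total dominating set of $G$, and
\[
w(f)=\sum_{u}f(H_u)\ \ge\ |A\setminus U_2^*|+2|U_2^*|\ \ge\ |A|+|U_0^*|=|A\cup U_0^*|\ \ge\ \gamma_t(G).
\]
So the ``isolated'' vertices pay for their added neighbours out of their own surplus weight, and no WRDF re-verification is needed. I would replace your reassignment argument with this counting step.
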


\begin{proof}
Let $f(W_0,W_1,W_2)$ be a $\gamma_r(G\circ H)$-function. In order to show that $\gamma_r(G\circ H)\ge \gamma_r(G)$, we will show that there exists a WRDF $f_1(X_0,X_1,X_2)$ of $G$ where $X_0=\{x:\; (x,y)\in W_0\}$, $X_2=\{x: \; (x,y)\in W_2\, \text{ or } |\{x\}\times W_1|\ge 2\}$ and $X_1=V(G)\setminus (X_0\cup X_2)$.
Notice that, since $W_1\cup W_2$ is a dominating set of $G\circ H$, $X_1\cup X_2$ is a dominating set of $G$. Now, for every
$(x,y)\in W_0$ there exists $(x',y')\in N((x,y))\cap \left(W_1\cup W_2\right)$  and a  function  $f': V(G\circ H)\longrightarrow \{0,1,2\}$ defined by $f'(x',y')=f(x',y')-1$, $f'(x,y)=1$ and $f'(a,b)=f(a,b)$ for every $(a,b)\not\in \{(x,y),(x',y')\}$, which has no undefended vertex. Hence, the function
$f_1': V(G)\longrightarrow \{0,1,2\}$ defined by
  $f_1'(x')=f_1(x')-1$, $f_1'(x)=1$ and $f_1'(a)=f_1(a)$ for every $a\not\in \{x,x'\}$ has no undefended vertex. Thus, $\gamma_r(G\circ H)\ge \gamma_r(G)$. 
  
  Now, let  $X\subset V(G)$ be a $\rho(G)$-set. By Lemma \ref{Image-one-copy-H} we have
$$\gamma_r(G\circ H)=w(f)= \sum_{u\in V(G)}f(H_u)\ge \sum_{u\in X}f[H_u]\ge 2|X|=2\rho(G),$$
as required. 
  
  In order to prove that $\gamma_r(G\circ H)\ge \gamma_t(G)$, we define  $U_i=\{x\in V(G): \, f(H_x)=i\}$, where $i\in \{0,1\}$, and $U_2=\{x\in V(G): \, f(H_x)\ge 2\}$. By Lemma \ref{Image-one-copy-H} we have that if $x\in U_1$, then there exists $x'\in N(x)\cap (U_1\cup U_2)$. Now, let 
 $U_2^*=\{x\in U_2:\, \sum_{x'\in N(x)}f(H_{x'})=0\}.$ Since $\delta \ge 1$, there exists $U_0^*\subseteq U_0$ such that $|U_0^*|\le |U_2^*|$ with the property  that for every $x\in U_2^*$ there exists $x^*\in U_0^*\cap N(x)$. Notice that $U_1\cup U_2\cup U_0^*$ is a total dominating set.
 Therefore, $$\gamma_{r}(G\circ H)=w(f)=\sum_{u\in V(G)}f(H_u)\ge |U_1\cup U_2\cup U_0^*|\ge \gamma_t(G),$$ as required.
\end{proof}

In the next section we give closed formulae for $\gamma_r(G\circ H)$. In particular, we discuss several cases in which $\gamma_r(G\circ H)=\max \{\gamma_r(G), \gamma_t(G),2\rho(G)\}.$

An example of a graph with $\gamma_r(G)>\max \{ \gamma_t(G),2\rho(G)\}$ is the complete bipartite graph $G\cong K_{3,3}$, where $\gamma_r(K_{3,3})=3$ and  $\gamma_t(K_{3,3})=2\rho(K_{3,3})=2$. An example of a graph with $2\rho(G)>\max \{\gamma_r(G), \gamma_t(G)\}$ is the path graph $P_n$, $n\ge 4$, as   $\gamma_r(P_n)=\left\lceil\frac{3n}{7}\right\rceil$, $\gamma_t(P_n)=\lfloor n/2 \rfloor+\lceil n/4\rceil-\lfloor n/4\rfloor$ and $2\rho(P_n)=2\gamma(P_n)=2\left\lceil\frac{n}{3}\right\rceil$.  Finally, for the graph shown in Figure \ref{We compare total, weak Roman and 2-packing} we have $\gamma_t(G)=5>4=\max \{\gamma_r(G), 2\rho(G)\}$. 

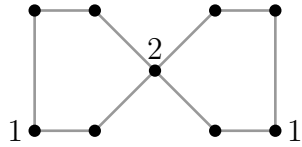
\begin{figure}[h]
\begin{center}
\begin{tikzpicture}
[line width=1pt,scale=0.8]
\draw[black!40]  (0,0) -- (1,1)--(2,1)--(2,-1)--(1,-1)--(0,0)--(-1,1)--(-2,1)--(-2,-1)--(-1,-1)--(0,0);

\filldraw[black]  (0,0) circle (0.08cm)  ;
\filldraw[black]  (1,1) circle (0.08cm)  ;
\filldraw[black]  (2,1) circle (0.08cm)  ;
\filldraw[black]  (2,-1) circle (0.08cm)  ;
\filldraw[black]  (1,-1) circle (0.08cm)  ;
\filldraw[black]  (-1,1) circle (0.08cm)  ;
\filldraw[black]  (-2,1) circle (0.08cm)  ;
\filldraw[black]  (-2,-1) circle (0.08cm)  ;
\filldraw[black]  (-1,-1) circle (0.08cm)  ;

\node [above] at (0,0) {$2$};
\node [right] at (2,-1) {$1$};
\node [left] at (-2,-1) {$1$};
\end{tikzpicture}
\end{center}\vspace{-0,5cm}
\caption{$\gamma_r(G)=4$, the labels correspond to an optimum placement of legions. }\label{We compare total, weak Roman and 2-packing}
\end{figure}

It is well known that for any graph $G$, $\gamma(G)\ge \rho(G)$.
Meir and Moon \cite{MR0401519} showed in 1975 that $\gamma(T)= \rho(T)$ for any tree $T$.  We remark that in general, these $\gamma(T)$-sets and $\rho(T)$-sets  are not identical. Notice that for any weak Roman tree $T$ we have $\gamma_r(T)=2\rho(T)$, while if $T$ is not a weak Roman tree, then $\gamma_r(T)<2\gamma(G)=2\rho(T)$.

\begin{corollary}
For any tree $T$ and any noncomplete graph $H$,
$$\gamma_r(T\circ H)\ge 2\gamma(T). $$
\end{corollary}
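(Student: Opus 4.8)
The plan is to apply Theorem~\ref{MainLowerBound} directly and then invoke the Meir--Moon theorem on trees. First I would recall that every tree $T$ has minimum degree $\delta \ge 1$ (it has no isolated vertex, being connected, unless it is the trivial graph, which we may safely exclude since $\gamma(K_1 \circ H) = \gamma_r(H) \ge 2 \ge 2\gamma(K_1)$ handles that degenerate case separately if needed). Thus Theorem~\ref{MainLowerBound} applies and yields
\[
\gamma_r(T \circ H) \ge \max\{\gamma_r(T), \gamma_t(T), 2\rho(T)\} \ge 2\rho(T).
\]

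Next I would use the fact, quoted in the excerpt from Meir and Moon~\cite{MR0401519}, that $\gamma(T) = \rho(T)$ for any tree $T$. Substituting this into the bound above gives $\gamma_r(T \circ H) \ge 2\rho(T) = 2\gamma(T)$, which is exactly the claimed inequality. That is essentially the whole argument: it is a two-line corollary, one line to extract the $2\rho(G)$ term from the general lower bound and one line to rewrite $\rho(T)$ as $\gamma(T)$ using Meir--Moon.

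There is no real obstacle here; the only point requiring a moment's care is making sure the hypotheses of Theorem~\ref{MainLowerBound} are met, namely that $G = T$ has minimum degree at least $1$ and that $H$ is noncomplete --- the latter is given in the corollary's statement, and the former holds for any nontrivial tree. If one wants to be fully rigorous about the one-vertex tree $K_1$, one notes $K_1 \circ H \cong H$, so $\gamma_r(K_1 \circ H) = \gamma_r(H) \ge 2 = 2\gamma(K_1)$ since $H$ is noncomplete, so the statement holds there too; but in the context of this paper trees are implicitly assumed to have order at least $2$ (or the authors simply restrict to connected graphs of order $\ge 2$ throughout), so I would not belabor this.
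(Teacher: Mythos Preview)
Your proposal is correct and follows exactly the approach implicit in the paper: the corollary is stated there without proof, immediately after Theorem~\ref{MainLowerBound} and the remark that $\gamma(T)=\rho(T)$ for trees (Meir--Moon), so the intended argument is precisely to combine the bound $\gamma_r(T\circ H)\ge 2\rho(T)$ from Theorem~\ref{MainLowerBound} with $\rho(T)=\gamma(T)$. Your treatment of the degenerate $K_1$ case is a nice extra that the paper does not bother with.
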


The bound above is achieved for any tree $T$ and any graph $H$ satisfying the assumptions of Theorem \ref{Tree2GammaT}.


\section{Closed formulae for $\gamma_r(G\circ H)$ }
\label{SectionClosedFormulae}

To begin this section we consider the case of lexicographic product graphs in which the second factor is a complete graph.

\begin{proposition}For any  graph $G$ and any  integer  $n\ge 1$,
  $$\gamma_{r}(G\circ K_{n})=  \gamma_r(G).$$
\end{proposition}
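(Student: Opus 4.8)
The plan is to prove $\gamma_r(G\circ K_n)=\gamma_r(G)$ by establishing the two inequalities separately, exploiting the fact that in $K_n$ every vertex is universal, so each copy $(K_n)_u$ is a clique and any vertex in one copy is adjacent to all vertices in adjacent copies.

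For the lower bound $\gamma_r(G\circ K_n)\ge \gamma_r(G)$, I would invoke Theorem~\ref{MainLowerBound} — but that theorem assumes $H$ is noncomplete, so instead I would argue directly. Given a $\gamma_r(G\circ K_n)$-function $f$, define a function $g$ on $V(G)$ by $g(u)=\min\{f(H_u),2\}$ where $f(H_u)=\sum_{v}f(u,v)$; equivalently set $X_0=\{u:f(H_u)=0\}$, $X_1=\{u:f(H_u)=1\}$, $X_2=\{u:f(H_u)\ge 2\}$. Clearly $w(g)\le w(f)$. The task is to check $g$ is a WRDF on $G$. If $g(u)=0$ then $f(u,v)=0$ for all $v$, and since $f$ is a WRDF, each $(u,v)$ has a secured neighbour; that neighbour must lie in some copy $H_x$ with $x\sim u$ (not in $H_u$ since that copy is all zeros), so $f(H_x)\ge 1$, i.e. $u$ has a neighbour in $X_1\cup X_2$. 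For the movement condition, when a legion is shifted in $G\circ K_n$ from $(x,v')$ to $(u,v)$ without creating undefended vertices, one can transfer the corresponding move in $G$ from $x$ to $u$; the key point is that a copy $H_x$ with $f(H_x)=1$ cannot have sent away its only legion (it would leave $H_x$ undefended internally, since $n\ge 2$ forces a non-universal... wait, $K_n$ has all vertices universal, so actually one must be careful). The cleaner route: after the shift, every copy still has image $\ge 1$ except possibly... actually since every vertex of $K_n$ is universal, a single legion in $H_x$ defends all of $H_x$, so emptying $H_x$ is only problematic if some vertex of $H_x$ has no neighbour outside; handle the case $n=1$ separately (then $G\circ K_1\cong G$ trivially). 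I expect this verification — translating the ``no undefended vertices after the move'' condition from the product down to $G$ — to be the main obstacle, and I would follow the template of the proof of Theorem~\ref{MainLowerBound} closely, checking that universality of $K_n$'s vertices makes the internal (within-copy) undefendedness vacuous.

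For the upper bound $\gamma_r(G\circ K_n)\le \gamma_r(G)$, I would take a $\gamma_r(G)$-function $f_1(V_0,V_1,V_2)$ on $G$ and lift it: pick a fixed vertex $h\in V(K_n)$ and define $f$ on $G\circ K_n$ by $f(u,h)=f_1(u)$ and $f(u,v)=0$ for $v\ne h$. Then $w(f)=w(f_1)=\gamma_r(G)$. I would verify $f$ is a WRDF: if $f(u,v)=0$, then either $u\in V_0$, in which case $u$ has a $G$-neighbour $x\in V_1\cup V_2$ and hence $(x,h)$ is a secured neighbour of $(u,v)$; or $v\ne h$ and $u\in V_1\cup V_2$, in which case $(u,h)$ itself is a secured neighbour within the same clique. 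For the movement condition, given the shift in $G$ that defends $u$ (moving a legion from $x$ to $u$), replicate it in the product by moving the legion from $(x,h)$ to $(u,h)$ if $v=h$, or — when $v\ne h$ and $u\in V_1\cup V_2$ — simply move a legion from $(u,h)$ to $(u,v)$ within the clique (if $f_1(u)=2$) or use the adjacency $u\sim x$ structure as appropriate; one checks no new undefended vertex appears because the ``shadow'' configuration on $G$ had none and every vertex $(w,v')$ with $v'\ne h$ either sits in a copy with a legion at $(w,h)$ or is adjacent to $(x,h)$ for some secured $x$. I would present this as a routine check, noting the correspondence between moves in $G$ and moves in $G\circ K_n$ is faithful precisely because $K_n$'s vertices are universal.

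Combining the two inequalities yields $\gamma_r(G\circ K_n)=\gamma_r(G)$, with the degenerate case $n=1$ noted at the outset since $G\circ K_1\cong G$. The only genuinely delicate point throughout is the within-copy undefendedness check after a legion movement, which I would dispatch using the universality of all vertices of $K_n$: a single secured vertex in a copy secures the entire copy, so the only way a copy contributes an undefended vertex is if the whole copy is zero and isolated from secured copies, which mirrors exactly the undefendedness condition on $G$ itself.
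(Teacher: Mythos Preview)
Your approach is correct: the paper itself declares the result ``straightforward'' and omits all details, so there is no proof to compare against beyond the natural two-inequality argument you outline. The projection $g(u)=\min\{f(H_u),2\}$ and the lifting $f(u,h)=f_1(u)$ are exactly the right maps, and the verifications you sketch go through (in particular, your worry about emptying a copy $H_x$ with $f(H_x)=1$ is resolved precisely as you say in your final paragraph: universality of every vertex of $K_n$ means a single secured vertex in a copy defends the whole copy, so undefendedness in $G\circ K_n$ reduces to undefendedness in $G$); you should simply clean up the stream-of-consciousness hedging into a direct argument.
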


\begin{proof}
The result is straightforward. 
We leave the details to the reader.
\end{proof}


 From Theorems \ref{BoundTotalDomination} and \ref{MainLowerBound} we have the following result.

\begin{theorem}\label{Equalityto2timestotaldomination}
For any graph  $G$  with
  $\gamma_t(G)=\frac{1}{2}\max \{\gamma_r(G),2\rho(G)\}$ and any noncomplete graph $H,$  $$\gamma_r(G\circ H)=2\gamma_t(G).$$
\end{theorem}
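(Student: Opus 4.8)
The plan is to sandwich $\gamma_r(G\circ H)$ between $2\gamma_t(G)$ from above and from below, using precisely the two results that precede the statement. Note first that the hypothesis $\gamma_t(G)=\tfrac12\max\{\gamma_r(G),2\rho(G)\}$ presupposes that $\gamma_t(G)$ is defined, hence $G$ has no isolated vertex and $\delta(G)\ge 1$; this is what makes both Theorem \ref{BoundTotalDomination} and Theorem \ref{MainLowerBound} applicable.

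For the upper bound I would simply invoke Theorem \ref{BoundTotalDomination}, which gives $\gamma_r(G\circ H)\le 2\gamma_t(G)$ for every graph $H$. For the lower bound I would apply Theorem \ref{MainLowerBound} (valid since $H$ is noncomplete and $\delta(G)\ge 1$), obtaining $\gamma_r(G\circ H)\ge \max\{\gamma_r(G),\gamma_t(G),2\rho(G)\}$. The only remaining point is to verify that the hypothesis forces this three-term maximum to equal $2\gamma_t(G)$: from $\rho(G)\le\gamma(G)\le\gamma_t(G)$ we get $2\rho(G)\le 2\gamma_t(G)$; from Remark \ref{DominationChain1} together with $\gamma(G)\le\gamma_t(G)$ (see chain \eqref{DominationChain2}) we get $\gamma_r(G)\le 2\gamma(G)\le 2\gamma_t(G)$; and trivially $\gamma_t(G)\le 2\gamma_t(G)$. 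Hence $\max\{\gamma_r(G),\gamma_t(G),2\rho(G)\}\le 2\gamma_t(G)$, while the hypothesis $2\gamma_t(G)=\max\{\gamma_r(G),2\rho(G)\}$ shows this maximum is in fact exactly $2\gamma_t(G)$. Combining the two inequalities gives $\gamma_r(G\circ H)=2\gamma_t(G)$.

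There is essentially no genuine obstacle in this argument; it is a short corollary of the two cited theorems. The only thing requiring a little care is making the implicit hypotheses explicit (that $G$ has no isolated vertex, so that both bounds are available) and carrying out the elementary reduction of the three-term maximum to $2\gamma_t(G)$ via the standard domination inequalities.
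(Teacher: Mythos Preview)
Your proposal is correct and follows exactly the approach the paper indicates: the result is stated as an immediate consequence of Theorems \ref{BoundTotalDomination} and \ref{MainLowerBound}, with no further argument given. Your write-up is in fact more detailed than necessary; for the lower bound it suffices to note that $\max\{\gamma_r(G),\gamma_t(G),2\rho(G)\}\ge\max\{\gamma_r(G),2\rho(G)\}=2\gamma_t(G)$, without separately bounding each term from above.
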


To show some families of graphs for which  $\gamma_r(G)=2\gamma_t(G)=2\rho(G)$, we introduce the corona product of two graphs. Let $G_1$ be a graph  of order $n$ and let $G_2$ be a graph. The \emph{corona product} of $G_1$ and $G_2$, denoted by  $G_1\odot G_2$, was defined in  \cite{Frucht1970} as the graph obtained from $G_1$ and $G_2$ by taking one copy of $G_1$ and $n$ copies of $G_2$ and joining by an edge each vertex from the $i$-th copy of $G_2$ with the $i$-th vertex of $G_1$.

\begin{theorem}
For any  graph $G_1$ with no isolated vertex and  any noncomplete graph   $G_2$,  $$\gamma_r(G_1\odot G_2)=2\gamma_t(G_1\odot G_2)=2\rho(G_1\odot G_2).$$
\end{theorem}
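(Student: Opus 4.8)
The plan is to establish the chain of equalities by proving three things: an upper bound $\gamma_r(G_1\odot G_2)\le 2\rho(G_1\odot G_2)$, a lower bound $\gamma_r(G_1\odot G_2)\ge 2\gamma_t(G_1\odot G_2)$, and the trivial-to-verify fact that $\gamma_t(G_1\odot G_2)\le\rho(G_1\odot G_2)$ (together with the always-true $\rho\le\gamma\le\gamma_t$, this forces all three quantities to coincide and sandwiches $\gamma_r$ between them). Since Remark~\ref{DominationChain1} and inequality~\eqref{DominationChain2} already give $\gamma_r(G)\le 2\gamma(G)\le 2\gamma_t(G)$ in general, and Theorem~\ref{MainLowerBound} gives $\gamma_r(G\circ H)\ge 2\rho(G)$ but here the product is lexicographic and $G_2$ is noncomplete (so $\gamma_r(G_1\odot G_2)$ is of a graph itself, not a lexicographic product), I must instead work with the corona structure directly. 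Actually the cleaner route: I will show $\rho(G_1\odot G_2)=\gamma_t(G_1\odot G_2)=n$ where $n=|V(G_1)|$, and then show $\gamma_r(G_1\odot G_2)\ge 2n$, the reverse inequality $\gamma_r\le 2\gamma_t=2n$ being automatic.

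First I would analyze the packing number. In $G_1\odot G_2$, pick one vertex from each of the $n$ copies of $G_2$; call the chosen vertex in the $i$-th copy $v_i$. The closed neighbourhood $N[v_i]$ lies entirely within the $i$-th copy of $G_2$ together with the $i$-th vertex of $G_1$ (since $v_i$ is only adjacent to vertices of its own $G_2$-copy and to the corresponding $G_1$-vertex), so these closed neighbourhoods are pairwise disjoint and $\rho(G_1\odot G_2)\ge n$. For the reverse, observe that every copy of $G_2$ plus its attachment vertex of $G_1$ forms a dominated "block," and a $2$-packing can contain at most one vertex whose closed neighbourhood meets any given block-vertex-of-$G_1$; a short argument shows $\rho\le n$. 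Hence $\rho(G_1\odot G_2)=n$. Since $\rho\le\gamma\le\gamma_t$ always, and since $V(G_1)$ (embedded in the corona) is a total dominating set precisely when $G_1$ has no isolated vertex — indeed each $G_1$-vertex dominates its whole $G_2$-copy, and the $G_1$-vertices totally dominate each other because $G_1$ has no isolated vertex — we get $\gamma_t(G_1\odot G_2)\le n$, so $\gamma_t(G_1\odot G_2)=\gamma(G_1\odot G_2)=\rho(G_1\odot G_2)=n$.

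The main obstacle is the lower bound $\gamma_r(G_1\odot G_2)\ge 2n$. Here I would take a $\gamma_r(G_1\odot G_2)$-function $f(V_0,V_1,V_2)$ and, for each $i$, let $w_i$ be the total weight $f$ places on the $i$-th copy of $G_2$ together with the $i$-th vertex $x_i$ of $G_1$ — so $w(f)=\sum_i w_i$. I claim $w_i\ge 2$ for every $i$, which immediately gives $w(f)\ge 2n$. To prove the claim, suppose $w_i\le 1$. Since $G_2$ is noncomplete, the $i$-th copy of $G_2$ has at least two vertices, and moreover a vertex $y$ of that copy that is not adjacent (within $G_2$) to some other vertex $z$ of the copy; the only vertices outside the copy adjacent to $y$ or $z$ is $x_i$. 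If $w_i=0$ then all of $x_i$, the $i$-th copy of $G_2$ are in $V_0$, and $z$ is undefended unless some neighbour of $z$ outside is in $V_1\cup V_2$ — but $z$'s only outside neighbour is $x_i\in V_0$, contradiction. If $w_i=1$, then the single legion sits at one vertex, say $p$, of the block $\{x_i\}\cup(\text{copy } i)$ with $f(p)=1$; some vertex $q$ in copy $i$ with $f(q)=0$ must be defended, and defending it requires moving the legion from a neighbour with value $\ge 1$ — the only candidate is $p$ if $p\sim q$, but then $p$ drops to $0$ and, because $G_2$ is noncomplete (and $p$ has a non-neighbour in the copy, or $x_i$'s removal isolates part), this creates an undefended vertex; a careful case split on whether $p=x_i$ or $p\in\text{copy }i$ finishes it. This "every corona block carries weight $\ge 2$" lemma is the crux, and the delicate part is handling the geometry of $G_2$ being noncomplete so that the legion-movement always fails. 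Once $w(f)\ge 2n$ is established, combining with $\gamma_r\le 2\gamma_t=2n$ and $2\gamma_t=2\rho=2n$ yields $\gamma_r(G_1\odot G_2)=2\gamma_t(G_1\odot G_2)=2\rho(G_1\odot G_2)$, as desired.
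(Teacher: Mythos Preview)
Your proposal is correct and follows essentially the same approach as the paper: both arguments show that $\rho(G_1\odot G_2)=\gamma_t(G_1\odot G_2)=|V(G_1)|$ (using $V(G_1)$ as a total dominating set and one vertex per $G_2$-copy as a $2$-packing), and both establish the key lower bound $\gamma_r(G_1\odot G_2)\ge 2|V(G_1)|$ by arguing that each block $\{g_i\}\cup(\text{copy }i)$ must receive weight at least $2$ under any WRDF, exploiting the noncompleteness of $G_2$. The paper states the block-weight argument more tersely while you spell out the case split, but the substance is identical.
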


\begin{proof}
Since $\gamma(G_1\odot G_2)=|V(G_1)|$, we have that $\gamma_r(G_1\odot G_2)\le 2|V(G_1)|$. Now, we denote by $\langle g_i\rangle+G_2$ the subgraph of $G_1\odot G_2$ induced by $g_i\in V(G)$ and the vertex set of the $i$-th copy of $G_2$. Since $G_2$ has two nonadjacent vertices and $g_i$ is  the only vertex of $\langle g_i\rangle+G_2$  which is adjacent to some vertex outside $\langle g_i\rangle+G_2$, we deduce that every $\gamma_r(G_1\odot G_2)$-function assigns at least two legions to the vertex set of $\langle g_i\rangle+G_2$, which implies that $\gamma_r(G_1\odot G_2)\ge  2|V(G_1)|$. Now, since $G_1$ is a graph with no isolated vertex, $V(G_1)$ is a total dominating set. Hence, $\gamma_r(G_1\odot G_2)=2|V(G_1)|=2\gamma_t(G_1\odot G_2).$ 

The proof of the equality $\gamma_t(G_1\odot G_2)=\rho(G_1\odot G_2)$ is straightforward.
\end{proof}


If $\gamma_r(G)=2\gamma(G)$, then for the Cocktail-party graph $K_{2k}-F$, where $F$ is a perfect matching of $K_{2k}$, we have $\gamma_r(G\circ (K_{2k}-F))=\gamma_r(G)$. This example  is a particular case  of the next result which is derived from Theorems \ref{WeakRomanTimesDominationNumber} and  \ref{MainLowerBound}.

\begin{theorem}\label{EqualityWRDNandTwotimesDom}
For any weak Roman graph  $G$  and any graph $H$ such that $\gamma_r(H)=2$, $$\gamma_r(G\circ H)=2\gamma(G).$$
\end{theorem}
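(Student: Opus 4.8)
The plan is to sandwich $\gamma_r(G\circ H)$ between $2\gamma(G)$ and $2\gamma(G)$ using the two general estimates already established, after checking that their hypotheses hold.

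First I would record that $H$ is noncomplete: since $\gamma_r(H)=2$, Remark~\ref{CaracterizationWRDN=1} (which says $\gamma_r(H)=1$ exactly when $H$ is complete) gives $H\not\cong K_{|V(H)|}$. This makes both Theorem~\ref{WeakRomanTimesDominationNumber} and Theorem~\ref{MainLowerBound} applicable as far as the second factor is concerned.

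For the upper bound I would invoke Theorem~\ref{WeakRomanTimesDominationNumber} directly: $\gamma_r(G\circ H)\le \gamma(G)\gamma_r(H)=2\gamma(G)$. For the lower bound I would first argue that a weak Roman graph $G$ has minimum degree $\delta\ge 1$. Indeed, $\gamma_r$ and $2\gamma$ are both additive over the connected components (Remark~\ref{RemarkWeakRomanNonConnectedGraphs}), and $\gamma_r(G_i)\le 2\gamma(G_i)$ for every component $G_i$ by Remark~\ref{DominationChain1}; hence $\gamma_r(G)=2\gamma(G)$ forces $\gamma_r(G_i)=2\gamma(G_i)$ for each $i$. A component that is a single vertex would have $\gamma_r=1\ne 2=2\gamma$, so no component is trivial and $\delta(G)\ge 1$. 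Now Theorem~\ref{MainLowerBound} applies and gives $\gamma_r(G\circ H)\ge \max\{\gamma_r(G),\gamma_t(G),2\rho(G)\}\ge \gamma_r(G)=2\gamma(G)$, the last equality because $G$ is a weak Roman graph. Combining the two inequalities yields $\gamma_r(G\circ H)=2\gamma(G)$.

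I do not expect a genuine obstacle here: the statement is an immediate corollary of Theorems~\ref{WeakRomanTimesDominationNumber} and~\ref{MainLowerBound}, as the preamble to the theorem already indicates. The only point requiring a line of care is confirming the hypotheses of the cited results — namely that $H$ is noncomplete and that $G$ has no isolated vertex so that $\delta(G)\ge 1$.
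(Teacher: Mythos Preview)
Your argument is correct and follows exactly the route the paper indicates, namely combining Theorem~\ref{WeakRomanTimesDominationNumber} for the upper bound with Theorem~\ref{MainLowerBound} for the lower bound. Your additional verification that $H$ is noncomplete and that a weak Roman graph has no isolated vertex (so that $\delta(G)\ge 1$) is a welcome bit of care that the paper leaves implicit.
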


The study of weak Roman graphs  was initiated in 
\cite{MR1991720} by Henning and Hedetniemi, where they characterized forests for which the equality holds.   The general problem 
 of characterizing all weak Roman  graphs  remains open.

From Lemma \ref{LemmaStrongSupportVertex} 
 and  Theorem \ref{EqualityWRDNandTwotimesDom} we derive the following result. 

\begin{theorem}\label{Tree2GammaT}
If $T$ is a tree with a unique $\gamma(T)$-set $S$, and if every vertex in $S$ is a  strong support vertex, then for any graph $H$ with $\gamma_r(H)=2$, $$\gamma_r(T\circ H)=2\gamma (T).$$
\end{theorem}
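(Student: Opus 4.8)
The plan is to combine the two results cited immediately before the statement, namely Lemma~\ref{LemmaStrongSupportVertex} and Theorem~\ref{EqualityWRDNandTwotimesDom}, since the hypothesis of Theorem~\ref{Tree2GammaT} is exactly the hypothesis of Lemma~\ref{LemmaStrongSupportVertex}. First I would invoke Lemma~\ref{LemmaStrongSupportVertex}: if $T$ is a tree with a unique $\gamma(T)$-set $S$ all of whose vertices are strong support vertices, then $T$ is a weak Roman tree, i.e. $\gamma_r(T)=2\gamma(T)$. In other words, $T$ is a weak Roman graph in the sense defined just after Remark~\ref{DominationChain1}.

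Next I would feed this into Theorem~\ref{EqualityWRDNandTwotimesDom}, which asserts that for any weak Roman graph $G$ and any graph $H$ with $\gamma_r(H)=2$ one has $\gamma_r(G\circ H)=2\gamma(G)$. Applying this with $G=T$ (legitimate, since $T$ is weak Roman by the previous paragraph) and with the given $H$ satisfying $\gamma_r(H)=2$, we immediately obtain $\gamma_r(T\circ H)=2\gamma(T)$, which is the desired conclusion.

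Since the statement is presented as a direct corollary of the two preceding results, there is essentially no obstacle: the whole proof is the observation that the hypotheses chain together correctly. The only point worth double-checking is that a graph $H$ with $\gamma_r(H)=2$ is necessarily noncomplete (so that Theorem~\ref{EqualityWRDNandTwotimesDom} and the lower-bound machinery it relies on apply): indeed, by Remark~\ref{CaracterizationWRDN=1}, $\gamma_r(H)=1$ exactly when $H$ is complete, so $\gamma_r(H)=2$ forces $H$ to be noncomplete. Hence the two-line argument — weak Roman tree via Lemma~\ref{LemmaStrongSupportVertex}, then apply Theorem~\ref{EqualityWRDNandTwotimesDom} — is complete.
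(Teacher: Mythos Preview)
Your proposal is correct and matches the paper's approach exactly: the paper derives Theorem~\ref{Tree2GammaT} directly from Lemma~\ref{LemmaStrongSupportVertex} and Theorem~\ref{EqualityWRDNandTwotimesDom}, which is precisely the chaining you describe. Your additional remark that $\gamma_r(H)=2$ forces $H$ to be noncomplete (via Remark~\ref{CaracterizationWRDN=1}) is a valid sanity check, though the hypothesis $\gamma_r(H)=2$ in Theorem~\ref{EqualityWRDNandTwotimesDom} already encodes this.
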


Our next result shows that the inequality $\gamma_r(G\circ H)\le  4\gamma(G) $ stated in Corollary \ref{UpperBound2TimesDominationNumber} is tight. 

\begin{theorem}\label{ThGeneralizeThe Star}
If $G$ is a graph with $\gamma_t(G)=2\gamma(G)$  and there exists a $\gamma(G)$-set  $D$ such that every vertex in $D$ is adjacent to a vertex of degree one, then for any graph $H$ with 
 $\gamma(H)\ge 4$,
$$\gamma_r(G\circ H)= 4\gamma(G).$$
\end{theorem}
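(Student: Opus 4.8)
The plan is to establish the two inequalities $\gamma_r(G\circ H)\le 4\gamma(G)$ and $\gamma_r(G\circ H)\ge 4\gamma(G)$ separately. The upper bound is immediate: since $\gamma_t(G)=2\gamma(G)$, Theorem~\ref{BoundTotalDomination} gives $\gamma_r(G\circ H)\le 2\gamma_t(G)=4\gamma(G)$. So the whole work lies in the lower bound $\gamma_r(G\circ H)\ge 4\gamma(G)$.

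For the lower bound, let $D=\{u_1,\dots,u_k\}$ be a $\gamma(G)$-set in which every $u_i$ is adjacent to a vertex $w_i$ of degree one in $G$, and let $f$ be a $\gamma_r(G\circ H)$-function. The key observation is that, because $w_i$ has degree one in $G$, its only neighbour is $u_i$, and therefore in $G\circ H$ the copy $H_{w_i}$ is joined to the rest of the graph \emph{only} through $H_{u_i}$. I would argue that $f$ must place substantial weight on $H_{w_i}\cup H_{u_i}$: first, Lemma~\ref{Image-one-copy-H} applied at $w_i$ gives $f(H_{w_i})+f(H_{u_i})=f[H_{w_i}]\ge 2$. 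I would then push this to $f(H_{w_i})+f(H_{u_i})\ge 4$ by exploiting $\gamma(H)\ge 4$: if the total weight on these two copies were at most $3$, then after any single legion move (the move allowed in the definition of a WRDF, which can affect at most the two copies $H_{w_i}$ and $H_{u_i}$ and at most one further copy $H_x$ with $x\in N[u_i]$), the vertices of $H_{w_i}$ assigned $0$ would need to be dominated \emph{inside} $H_{w_i}$ — forcing the positive vertices within $H_{w_i}$ (after the move) to form a dominating set of $H$ of size at most... and a careful bookkeeping of how weight $\le 3$ can be distributed, together with the constraint that moving a legion out of $H_{u_i}$ into $H_{w_i}$ must not leave $H_{u_i}$ or any neighbouring copy undefended, yields a dominating set of $H$ of cardinality $\le 3$, contradicting $\gamma(H)\ge 4$. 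Hence $f(H_{w_i})+f(H_{u_i})\ge 4$ for each $i$.

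Finally, since the pairs $\{w_i,u_i\}$ — wait, these need not be disjoint across $i$ since a single $u_i$ could be the common neighbour of several $w_i$, but the $w_i$ are distinct leaves; more carefully, I would choose the leaves $w_1,\dots,w_k$ to be distinct (one per $u_i$) and note that the sets $\{w_i\}$ are pairwise disjoint and disjoint from $D$. To sum the local bounds cleanly, I would instead group the copies as follows: for each $i$, the weight $f(H_{w_i})$ is "charged" to $w_i$, and I claim $\sum_{i=1}^k\bigl(f(H_{w_i})+f(H_{u_i})\bigr)\ge 4k$ can be turned into a bound on $w(f)=\sum_{u\in V(G)}f(H_u)$ provided no $H_{u_i}$ is counted twice, i.e. provided the $u_i$ are distinct (true, $D$ is a set) and the $w_i$ are distinct from each other and from all $u_j$ — the latter because $w_i$ has degree one so $w_i\notin D$ would need checking, and two $w_i$'s coinciding is excluded by choosing them distinct. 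Then $w(f)\ge\sum_{i=1}^k f(H_{w_i})+\sum_{i=1}^k f(H_{u_i})\ge \sum_{i=1}^k\bigl(f(H_{w_i})+f(H_{u_i})\bigr)\ge 4k=4\gamma(G)$, and combined with the upper bound we get equality.

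The main obstacle is the sharpening from $f(H_{w_i})+f(H_{u_i})\ge 2$ to $\ge 4$: this requires a genuinely careful case analysis of how a WRDF of small weight could behave on the pendant copy $H_{w_i}$ and its unique attaching copy $H_{u_i}$, tracking exactly which vertices become undefended after the legion move and extracting a small dominating set of $H$ to contradict $\gamma(H)\ge 4$. A secondary (but routine) point is verifying the disjointness/double-counting bookkeeping needed to sum the local estimates into the global bound $w(f)\ge 4\gamma(G)$; I would handle this by fixing in advance an injection $i\mapsto w_i$ from $D$ into the degree-one vertices of $G$ and noting $\{u_1,\dots,u_k\}\cap\{w_1,\dots,w_k\}=\emptyset$.
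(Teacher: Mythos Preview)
Your upper bound is fine. The gap is in the lower bound: the key inequality $f(H_{w_i})+f(H_{u_i})\ge 4$ that you try to establish for each $i$ is simply false. Legions helping to defend the pendant copy $H_{w_i}$ may sit on copies $H_y$ for \emph{other} neighbours $y\in N(u_i)\setminus\{w_i\}$, and your two-copy count ignores them. Concretely, take $G=K_{1,3}$ with centre $u_1$ and leaves $w_1,y,z$ (so $\gamma_t(G)=2\gamma(G)=2$), and any $H$ with $\gamma(H)\ge 4$; the assignment $f(u_1,a)=2$, $f(y,b)=2$ for fixed $a,b\in V(H)$ and $f\equiv 0$ elsewhere is a WRDF of weight $4$, yet $f(H_{w_1})+f(H_{u_1})=0+2=2$. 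The specific error is the sentence ``the vertices of $H_{w_i}$ assigned $0$ would need to be dominated \emph{inside} $H_{w_i}$'': after a legion moves from $(u_i,h)$ to $(w_i,v)$, the copy $H_{u_i}$ may still carry positive weight, and that alone dominates every vertex of $H_{w_i}$ through the $G$-edge $u_iw_i$, so no small dominating set of $H$ is forced.

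A tell-tale sign that something is missing is that your lower-bound argument never uses the hypothesis $\gamma_t(G)=2\gamma(G)$. In the paper this hypothesis is decisive: it implies that the closed neighbourhoods $\{N[x]:x\in D\}$ \emph{partition} $V(G)$, so the sets $N[x]\times V(H)$ are pairwise disjoint and cover $V(G\circ H)$. The correct local claim is that $f\bigl(N[x]\times V(H)\bigr)\ge 4$ for each $x\in D$; the case analysis (on how at most three legions in $N[x]\times V(H)$ could split between $\{x\}\times V(H)$ and the rest) uses the leaf $x'\in N(x)$ to reach a contradiction with $\gamma(H)\ge 4$. Summing over the partition then gives $w(f)\ge 4|D|=4\gamma(G)$ with no double-counting issues. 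To repair your argument, replace the pair $\{u_i,w_i\}$ by the full block $N[u_i]$ and redo the case analysis at that level.
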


\begin{proof}
Assume that $\gamma_t(G)=2\gamma(G)$,  $\gamma(H)\ge 4$ and let $D$  be a $\gamma(G)$-set such that every vertex in $D$ is adjacent to a vertex of degree one. We will show that $\gamma_r(G\circ H)\ge 4\gamma(G)$.
Since  $\gamma_t(G)=2\gamma(G)$,  the vertex  set of $G$ can be partitioned by the  closed neighbourhoods of vertices in $D$, \textit{i.e.},  $V(G)=\cup_{x\in D}N[x]$ and $N[x]\cap N[y]=\emptyset$, for every $x,y\in D$, $x\ne y$.  Now, let $f(W_0,W_1,W_2)$ be a $\gamma_r(G\circ H)$-function and 
let $x'\in N(x)$ be a vertex of degree one, for  $x\in D$. 
Suppose that $f$ assigns at most three legions to $N[x]\times V(H)$.
We differentiate the following cases for the set $W=W_1\cup W_2$.

\noindent Case 1. $|W\cap (\{x\}\times V(H))|=3$. Since $\gamma(H)\ge 4$, there exists at least one vertex in  $\{x\}\times V(H)$ which is not dominated by the elements in $W$, which is a contradiction.

\noindent Case 2. $|W_2\cap (\{x\}\times V(H))|=1$ or $|W_1\cap (\{x\}\times V(H))|=2$. In both cases 
 there  exists $y\in N(x)$ such that  $|W_1\cap (\{y\}\times V(H))|=1$. Since $\gamma(H)\ge 4$, the movement of a legion from the vertex in $W_1\cap (\{y\}\times V(H))$ to any vertex in  $W_0\cap (\{x\}\times V(H))$ produces  undefended vertices in $\{x\}\times V(H)$, which is a contradiction.

\noindent Case 3. $|W_1\cap (\{x\}\times V(H))|=1$. Since  $\gamma(H)\ge 4$, the movement of a legion from the vertex in  $W_1\cap (\{x\}\times V(H))$ to any vertex in $ W_0\cap ( \{x'\}\times V(H))$ produces  undefended vertices in $\{x'\}\times V(H)$, which is a contradiction.

\noindent Case 4. $|W\cap (\{x\}\times V(H))|=0$. Since $\gamma(H)\ge 4$, there exists at least one vertex  $(x',h)\in W_0$ which is not dominated by the elements in $W$, which is a contradiction.

According to the four cases above, for every $x\in D$ we have that $f$ assigns at least four legions to $ N[x]\times V(H)$, which implies that $\gamma_r(G\circ H)\ge 4\gamma(G)$. 

Furthermore, by Corollary \ref{UpperBound2TimesDominationNumber}, $\gamma_r(G\circ H)\le 4\gamma(G)$. Therefore, the result follows.
\end{proof}

\begin{figure}[h]
\begin{center}
\begin{tikzpicture}
[line width=1pt,scale=0.8]

\foreach \number in {1,...,9}{
\coordinate (A\number) at (\number,0);}
\foreach \number in {2,5,8}{
\coordinate (B\number) at (\number,1);
\coordinate (C\number) at (\number,-1);}
\foreach \number in {2,5,8}{
\draw[black!40] (C\number)--(A\number)--(B\number);
}

\draw[black!40]  (A1)--(A2)--(A3)--(A4)--(A5)--(A6)--(A7)--(A8)--(A9);

\foreach \number in {1,...,9}{
\filldraw[black]  (A\number) circle (0.08cm)  ;
}
\foreach \number in {2,5,8}{
\filldraw[black]  (B\number) circle (0.08cm)  ;
\filldraw[black]  (C\number) circle (0.08cm)  ;
}
\filldraw[gray]  (2,0) circle (0.08cm)  ;
\filldraw[gray]  (5,0) circle (0.08cm)  ;
\filldraw[gray]  (8,0) circle (0.08cm)  ;

\end{tikzpicture}
\end{center}
\vspace{-0,5cm}
\caption{The set of gray-coloured vertices is the only dominating set of $G$. In this case $\gamma_t(G)=2\gamma(G)=6$. By Theorem \ref{ThGeneralizeThe Star}, for any graph $H$ with $\gamma(H)\ge 4$ we have $\gamma_r(G\circ H)=12=4\gamma(G).$}\label{Only dominating set}
\end{figure}
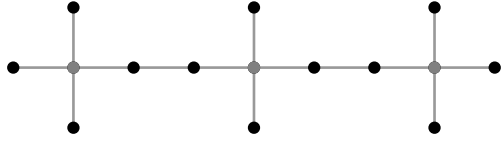

  For the tree shown in Figure \ref{Only dominating set} we have $\gamma(T)= \rho(T)=3$. Notice that in this case the set of support vertices of $T$ is the only $\gamma(T)$-set and a $\rho(T)$-set.

\begin{corollary}
 If the set of support vertices of a tree $T$ is a $\rho(T)$-set, then for any graph $H$ with $\gamma(H)\ge 4$,
$$\gamma_{r}(T\circ H)= 4\gamma(T).$$
\end{corollary}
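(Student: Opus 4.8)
The plan is to derive this corollary from Theorem~\ref{ThGeneralizeThe Star} by verifying that a tree $T$ whose support-vertex set is a $\rho(T)$-set satisfies the two hypotheses of that theorem: namely $\gamma_t(T)=2\gamma(T)$, and there exists a $\gamma(T)$-set $D$ in which every vertex is adjacent to a vertex of degree one. Write $S$ for the set of support vertices of $T$, and assume $S$ is a $\rho(T)$-set, so $|S|=\rho(T)$.

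First I would observe that $S$ is a dominating set of $T$. Indeed, for trees the classical result of Meir and Moon gives $\gamma(T)=\rho(T)$, and it is standard (and easy) that some minimum dominating set of a tree contains all support vertices; more directly, since every leaf is dominated only by its support vertex, $S$ must be contained in (or, after the usual exchange argument, can be assumed to coincide with a subset of) every minimum dominating set, and $|S|=\rho(T)=\gamma(T)$ forces $S$ itself to be a minimum dominating set. Hence $S$ is a $\gamma(T)$-set, and since by definition every support vertex is adjacent to a leaf, i.e. to a vertex of degree one, the set $D:=S$ already witnesses the second hypothesis of Theorem~\ref{ThGeneralizeThe Star}.

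Next I would establish $\gamma_t(T)=2\gamma(T)$. Since $S$ is a $2$-packing, the closed neighbourhoods $\{N[x]:x\in S\}$ are pairwise disjoint, and because $S$ is also dominating they partition $V(T)$: $V(T)=\bigcup_{x\in S}N[x]$. A total dominating set must, for each $x\in S$, contain at least one vertex of $N[x]$ to dominate the leaf hanging at $x$ (in fact it must contain $x$ itself), and additionally at least one further vertex of $N[x]$ to dominate $x$ — and by disjointness these contributions cannot be shared between different $x\in S$. This gives $\gamma_t(T)\ge 2|S|=2\gamma(T)$; the reverse inequality $\gamma_t(G)\le 2\gamma(G)$ holds for every graph with no isolated vertex (as already used in the proof of Corollary~\ref{UpperBound2TimesDominationNumber}). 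Hence $\gamma_t(T)=2\gamma(T)$. Applying Theorem~\ref{ThGeneralizeThe Star} with $D=S$ then yields $\gamma_r(T\circ H)=4\gamma(T)$ for every graph $H$ with $\gamma(H)\ge 4$.

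The only mildly delicate point — the one I would write out most carefully — is the claim that $S$ being a $\rho(T)$-set forces $S$ to be simultaneously a $\gamma(T)$-set whose members all have leaf-neighbours; everything else is either a direct invocation of Theorem~\ref{ThGeneralizeThe Star}, the Meir--Moon identity $\gamma(T)=\rho(T)$, or the universal bound $\gamma_t\le 2\gamma$. In particular one should note that the hypothesis already guarantees $T$ has at least one leaf (so $S\ne\emptyset$ and $T$ is not a single vertex), which is what makes the corona-type partition argument go through.
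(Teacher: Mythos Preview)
Your proposal is correct and follows precisely the route the paper intends: the corollary is stated immediately after Theorem~\ref{ThGeneralizeThe Star} with no separate proof, so the implicit argument is exactly the verification you give --- use Meir--Moon ($\gamma(T)=\rho(T)$) together with the standard exchange argument to see that $S$ is a $\gamma(T)$-set of vertices each adjacent to a leaf, then use the disjoint closed-neighbourhood partition to force $\gamma_t(T)\ge 2|S|=2\gamma(T)$, and finally invoke Theorem~\ref{ThGeneralizeThe Star}. One small wording issue: the claim that ``$S$ must be contained in every minimum dominating set'' is not literally true (e.g.\ $P_2$), but you immediately supply the correct exchange argument, so the conclusion $S$ is a $\gamma(T)$-set stands.
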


From Theorems \ref{BoundDoubleDomination} and \ref{MainLowerBound} we have the following result.

\begin{theorem}\label{WRoman=2total}
If $G$ is a graph such that $\gamma_{2,t}(G)=\max \{\gamma_r(G),2\rho(G)\}$, then for any noncomplete graph $H$, $$\gamma_r(G\circ H)=\gamma_{2,t}(G).$$
\end{theorem}

According to Theorem \ref{WRoman=2total}, the problem of characterizing  the graphs for which $\gamma_{2,t}(G)=\gamma_r(G)$  or $\gamma_{2,t}(G)=2\rho(G)$ deserves being considered in future works.
We will construct a family $\mathcal{H}_k$ of graphs such that $\gamma_r(G)=\gamma_{2,t}(G)$, for every $G\in \mathcal{H}_k$.
 A graph $G=(V,E)$ belongs to $\mathcal{H}_k$ if and only if  it is constructed from a cycle $C_k$ and $k$ empty graphs $N_{s_1},\dots, N_{s_k}$ of order $s_1,\dots,s_k$, respectively, and joining by an edge each vertex from $N_{s_i}$ with the  vertices $v_i$ and $v_{i+1}$ of $C_k$. Here we are assuming that $v_i$ is adjacent to $v_{i+1}$ in $C_k$, where the subscripts are taken modulo $k$.
 Figure \ref{The graphs in H}  shows a graph belonging to $\mathcal{H}_k$, where $k=4$, $s_1=s_3=3$ and  $s_2=s_4=2$.
\begin{figure}[h]
\begin{center}
\begin{tikzpicture}
[line width=1pt,scale=0.8]

\foreach \number in {-4,-3,-2}{
\coordinate (A\number) at (\number,0);}
\foreach \number in {-3,-2}{
\coordinate (C\number) at (0,\number);}
\foreach \number in {2,3}{
\coordinate (D\number) at (0,\number);}
\foreach \number in {2,3,4}{
\coordinate (B\number) at (\number,0);}

\foreach \number in {-4,-3,-2}{
\draw[black!40] (-1,-1)-- (A\number) -- (-1,1);
}
\foreach \number in {2,3}{
\draw[black!40] (-1,1)-- (D\number) -- (1,1);}
\foreach \number in {2,3,4}{
\draw[black!40] (1,-1)-- (B\number) -- (1,1);
}
\foreach \number in {-2,-3}{
\draw[black!40] (-1,-1)-- (C\number) -- (1,-1);}
\draw[black!40]  (-1,-1) -- (1,-1)--(1,1)--(-1,1)--(-1,-1);

\filldraw[gray]  (1,1) circle (0.08cm)  ;
\filldraw[gray]  (-1,1) circle (0.08cm)  ;
\filldraw[gray]  (1,-1) circle (0.08cm)  ;
\filldraw[gray]  (-1,-1) circle (0.08cm)  ;

\foreach \number in {2,3,4}{
\filldraw[black]  (B\number) circle (0.08cm)  ;
}
\foreach \number in {-2,-3,-4}{
\filldraw[black]  (A\number) circle (0.08cm)  ;
}
\foreach \number in {2,3}{
\filldraw[black]  (D\number) circle (0.08cm)  ;
}
\foreach \number in {-2,-3}{
\filldraw[black]  (C\number) circle (0.08cm)  ;
}
\end{tikzpicture}
\end{center}
\vspace{-0,5cm}
\caption{The set of gray-coloured vertices is a double dominating set.}\label{The graphs in H}
\end{figure}
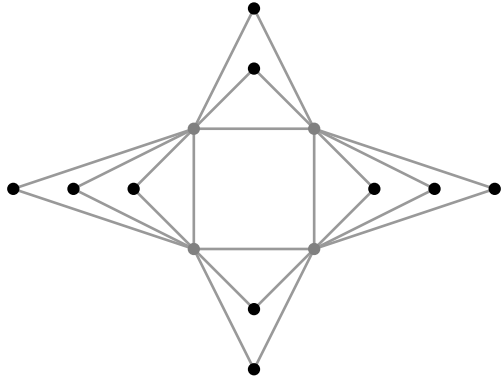

For any graph $G\in \mathcal{H}_k$ we have $\gamma_r(G)=\gamma_{2,t}(G)=k$. Therefore, by Theorem \ref{WRoman=2total}, for any $G\in \mathcal{H}_k$ and any graph $H$,  $\gamma_r(G\circ H)=k.$

From now on we say that a vertex $a\in V(H)$ satisfies Property $\mathcal{P}$ if $\{a,b\}$ is a dominating set of $H$, for every $b\in V(H)\setminus N[a]$. In other words,
$a\in V(H)$ satisfies  $\mathcal{P}$ if the subgraph induced by $V(H)\setminus N[a]$ is a clique.

\begin{proposition}For any  integer $n\ge 3$    and any  noncomplete
graph $H$,
$$2\le \gamma_r( K_n\circ H)\le 3.$$
Furthermore, $\gamma_r( K_n\circ H)=2$ if and only if $\gamma_r(H)=2$ or there exists a vertex of $H$ which satisfies $\mathcal{P}$.
\end{proposition}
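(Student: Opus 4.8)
The plan is to prove both displayed bounds and then the characterization of the case $\gamma_r(K_n\circ H)=2$. Throughout I write $H_i$ for the copy of $H$ sitting over the vertex $u_i$ of $K_n$, and I exploit the basic structural fact about $K_n\circ H$: a vertex of $H_i$ is adjacent in $K_n\circ H$ to \emph{every} vertex of $H_j$ whenever $j\ne i$.

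\textit{The bounds.} Since $H$ is noncomplete it has two nonadjacent vertices, whose copies in $H_1$ are nonadjacent in $K_n\circ H$; hence $K_n\circ H$ is noncomplete and $\gamma_r(K_n\circ H)\ge 2$ by Remark~\ref{CaracterizationWRDN=1}. For the upper bound, fix three vertices $u_1,u_2,u_3$ of $K_n$ (this is where $n\ge 3$ is used): every vertex of $K_n$ is adjacent to at least two of them, so $\gamma_{2,t}(K_n)\le 3$; since $K_n$ has minimum degree $n-1\ge 2$, Theorem~\ref{BoundDoubleDomination}(iii) gives $\gamma_r(K_n\circ H)\le\gamma_{2,t}(K_n)\le 3$. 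Alternatively, one checks directly that assigning $1$ to one vertex in each of $H_1,H_2,H_3$ is a WRDF of weight $3$: after any legitimate move, two legions survive in two distinct copies, and each copy of $H$ is dominated by a legion lying in another copy, so no vertex is left undefended.

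\textit{The ``if'' direction.} If $\gamma_r(H)=2$, then Theorem~\ref{WeakRomanTimesDominationNumber} yields $\gamma_r(K_n\circ H)\le\gamma(K_n)\gamma_r(H)=1\cdot 2=2$, and equality follows. If some $h\in V(H)$ satisfies $\mathcal{P}$, I would verify that the function $f$ with $f(u_1,h)=f(u_2,h)=1$ and value $0$ elsewhere is a WRDF of weight $2$. It dominates $K_n\circ H$ because $(u_1,h)$ dominates every $H_k$ with $k\ne 1$ while $(u_2,h)$ dominates all of $H_1$. A zero vertex $v=(u_j,w)$ is defended by moving to it: the legion of $(u_1,h)$ when $j\notin\{1,2\}$ (two legions then remain in two distinct copies); the legion of $H_j$ when $j\in\{1,2\}$ and $w\sim h$ (again the two surviving legions lie in two distinct copies); and the legion of the \emph{other} of the copies $H_1,H_2$ when $j\in\{1,2\}$ and $w\notin N_H[h]$. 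This last case is the only one that uses $\mathcal{P}$: the two remaining legions sit in $H_j$ at the vertices $h$ and $w$, the only potentially undefended vertices of $H_j$ are the $(u_j,z)$ with $z\notin N_H(h)\cup N_H(w)$, and since $w\in V(H)\setminus N_H[h]$ and $h$ satisfies $\mathcal{P}$, the set $\{h,w\}$ dominates $H$, so no such $z$ exists.

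\textit{The ``only if'' direction.} Let $f$ be a weight-$2$ WRDF of $K_n\circ H$; it either puts $2$ on a single vertex or $1$ on exactly two vertices $p,q$. In the first case that vertex must dominate $K_n\circ H$ by itself, hence is universal in $K_n\circ H$, which forces its $H$-coordinate to be universal in $H$; then $\gamma(H)=1$ and $\gamma_r(H)\le 2\gamma(H)=2$, so $\gamma_r(H)=2$. In the second case $\{p,q\}$ is a secure dominating set of $K_n\circ H$. If $p,q$ lie in a common copy $H_i$, then, because every vertex outside $H_i$ is dominated and stays dominated after any move, the $H$-coordinates of $p$ and $q$ form a secure dominating set of $H$; hence $\gamma_s(H)=2$ and $\gamma_r(H)=2$. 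If instead $p=(u_i,x)$ and $q=(u_j,y)$ with $i\ne j$, I would show $x$ satisfies $\mathcal{P}$: for every $w\in V(H)\setminus N_H[x]$ the zero vertex $(u_i,w)$ has $q$ as its only neighbour of positive value, so its defense must move $q$'s legion onto it, after which the only legions are at $(u_i,x)$ and $(u_i,w)$; for $H_i$ to contain no undefended vertex, $\{x,w\}$ must dominate $H$, and since $w$ was an arbitrary element of $V(H)\setminus N_H[x]$ this says precisely that $x$ satisfies $\mathcal{P}$. The one genuinely delicate step is this last one --- isolating exactly which vertices of a copy are hard to defend and recognizing that their forced defense succeeds iff a coordinate of one legion satisfies $\mathcal{P}$ --- together with the easy-to-miss point that the weight-$2$ certificate in the ``if'' direction places the \emph{same} vertex $h$ of $H$ in two different copies of $K_n$, not two distinct $\mathcal{P}$-vertices.
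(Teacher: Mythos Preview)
Your proof is correct and follows essentially the same approach as the paper's: the bounds via Remark~\ref{CaracterizationWRDN=1} and Theorem~\ref{BoundDoubleDomination}, the same weight-$2$ certificate $f(u_1,h)=f(u_2,h)=1$ for the $\mathcal{P}$ case, and the same case split on the shape of a weight-$2$ WRDF for the converse. The only cosmetic differences are that you invoke Theorem~\ref{WeakRomanTimesDominationNumber} rather than constructing the WRDF explicitly when $\gamma_r(H)=2$, and you spell out the defense verification in the $\mathcal{P}$ case more carefully than the paper does.
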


\begin{proof}
By Remark \ref{CaracterizationWRDN=1} we have  $\gamma_r(K_n\circ H)\ge 2$ and by Theorem \ref{BoundDoubleDomination} we have that $\gamma_r( K_n\circ H)\le 3.$

To characterize the graphs with $\gamma_r(K_n\circ H)= 2$ we  first assume that  $\gamma_r(H)=2$, and we will apply Remark \ref{CaractWRDN=2} to the graph $H$.
Let $u\in V(G)$ and let $\{a,b\}\subseteq V(H)$ a secure dominating set. We claim that the function $f(X_0,X_1,X_2)$ defined by
 $X_0=(V(K_n)\times V(H))\setminus \{(u,a),(u,b)\}$, $X_1=\{(u,a),(u,b)\}$ and $X_2=\emptyset$ is a $\gamma_r(K_n\circ H)$-function. To see this, we only need to observe that the movement of a legion from $(u,a)$ (or from $(u,b)$) to a vertex in $X_0$ does not produce undefended vertices.   Now, if $\gamma(H)=1$, then we define the $\gamma_r(K_n\circ H)$-function $f$ by $X_0=V(K_n)\times V(H)\setminus \{(u,z)\}$, $X_1=\emptyset$ and $X_2=\{(u,z)\}$, where $z\in V(H)$ is a vertex of maximum degree. On the other hand, if $a\in V(H)$  satisfies  $\mathcal{P}$, then
we define the $\gamma_r(K_n\circ H)$-function $f$ by $X_0=(V(K_n)\times V(H))\setminus \{(u_1,a),(u_2,a)\}$, $X_1=\{(u_1,a),(u_2,a)\}$ and $X_2=\emptyset$, where $u_1\ne u_2$.

Conversely, assume that $\gamma_r( K_n\circ H)=2$ and let $f(W_0,W_1,W_2)$
be a $\gamma_r(K_n\circ H)$-function. Notice  that, $|W_1|+2|W_2|=2$.
 Now, if  $W_2=\{(u,a)\}$, then $\gamma(H)=1$.  From now on, assume that $W_1=\{(u_1,a),(u_2,b)\}$ and $\gamma(H)\ge 2$.
If $u_1=u_2$, then $\{a,b\}$ is a secure dominating set of $H$, and by Remark \ref{CaractWRDN=2}, $\gamma_r (H)=2$.
Finally, if  $u_1\ne u_2$, then the movement of a legion from $(u_2,a)$ to $(u_1,c)$, where $c\in V(H)\setminus N[a]$, does not produce undefended vertices, which implies that $a$ satisfies $\mathcal{P}$.
 \end{proof}

\begin{proposition} \label{PropStarsTimesH}
Let $H$ be a graph and let $n\ge3$ be an integer. Then the following statements hold.
\begin{enumerate}[{\rm (i)}]
\item If $\gamma_r(H)\in \{2,3\}$, then $\gamma_r(K_{1,n}\circ H)=\gamma_r(H).$
\item If $\gamma_r(H)\ge 4$, then $3\le \gamma_r(K_{1,n}\circ H)\le 4.$

\item If $\gamma(H)\ge 4$, then $\gamma_r(K_{1,n}\circ H)=4.$
\end{enumerate}
\end{proposition}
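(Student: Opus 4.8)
The three parts will be handled together by analysing $\gamma_r(K_{1,n}\circ H)$ directly; throughout, let $c$ denote the center of $K_{1,n}$ and $u_1,\dots,u_n$ its leaves. Write $H_c$ for the copy of $H$ sitting over $c$, and $H_1,\dots,H_n$ for the copies over the leaves. The key structural fact is that $c$ is adjacent to every leaf, so any vertex of $H_c$ dominates all of $H_c\cup H_1\cup\dots\cup H_n$ in $K_{1,n}\circ H$; conversely, a vertex of some $H_i$ dominates only $H_i\cup H_c$, and within $H_i$ it dominates just its closed neighbourhood in the copy of $H$.

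\textbf{Lower bounds.}
First I would show $\gamma_r(K_{1,n}\circ H)\ge 3$ whenever $\gamma_r(H)\ge 2$ (equivalently $H$ noncomplete): by Lemma~\ref{Image-one-copy-H} applied to each leaf $u_i$ we have $f[H_{u_i}]=f(H_{u_i})+f(H_c)\ge 2$, and by the same lemma applied to $c$ we have $f[H_c]=f(H_c)+\sum_i f(H_{u_i})\ge 2$. If the total weight were $2$, then either $f(H_c)=2$ and all $f(H_{u_i})=0$ — but then placing a legion anywhere in a leaf copy and observing $\gamma(H)\ge 2$ shows some vertex of that leaf copy stays undefended after the unique legal move from $H_c$, since one legion in $H_c$ cannot protect against every leaf simultaneously (a careful case check using $n\ge 3$); or $f(H_c)\le 1$, forcing $\sum_i f(H_{u_i})\ge 1$ and, combined with $f[H_{u_i}]\ge 2$ for every $i$, a contradiction with total weight $2$ since $n\ge 3$ and at most one leaf can carry positive weight. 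Hence $\gamma_r(K_{1,n}\circ H)\ge 3$, giving the lower bound in (ii). For (iii), assuming $\gamma(H)\ge 4$, I would rule out weight $3$: the legions, distributed among the copies, must handle every leaf copy $H_i$; since $\gamma(H)\ge 4$, a single leaf copy with only one or two legions placed in it (or protected only via $H_c$) cannot be rendered safe — the detailed argument mirrors the four-case analysis in the proof of Theorem~\ref{ThGeneralizeThe Star}, replacing ``$N[x]\times V(H)$'' by the copies $H_c, H_1,\dots,H_n$ and using that at most two legions in $H_c$ plus the remaining at most one legion elsewhere cannot cover the $\gamma(H)\ge 4$ requirement across all $n\ge 3$ leaves after any single legal move. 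This yields $\gamma_r(K_{1,n}\circ H)\ge 4$ in case (iii).

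\textbf{Upper bounds and exact values.}
For the upper bound $\gamma_r(K_{1,n}\circ H)\le 4$ in (ii) and (iii): place two legions on two distinct vertices of $H_c$ (i.e.\ $W_2$ or $W_1$ chosen inside $\{c\}\times V(H)$ with total weight $4$ concentrated there, e.g.\ two vertices with value $2$, or four with value $1$ — two vertices valued $2$ suffices). Every vertex of every copy is then dominated by $H_c$, and after moving one legion from a value-$2$ vertex of $H_c$ into a leaf copy to defend an attacked vertex there, the other value-$2$ vertex of $H_c$ still dominates everything, so no undefended vertex is created; hence this is a WRDF of weight $4$. Combined with the lower bound this proves (iii), and proves the range in (ii). For (i): if $\gamma_r(H)\in\{2,3\}$, take a $\gamma_r(H)$-function on $H$ and copy it onto $H_c$ (all leaf copies get $0$); since $c$ dominates all leaf copies and any defensive move in $H_c$ is legal inside that copy while the rest of $H_c$ still dominates every leaf copy, this gives a WRDF of $K_{1,n}\circ H$ of weight $\gamma_r(H)$, so $\gamma_r(K_{1,n}\circ H)\le\gamma_r(H)$; the reverse inequality $\gamma_r(K_{1,n}\circ H)\ge\gamma_r(H)$ follows from $\gamma_r(K_{1,n}\circ H)\ge\gamma_r(K_{1,n}\circ H')\ge\gamma_r(H)$ via Theorem~\ref{MainLowerBound} (using $\gamma_r(G\circ H)\ge\gamma_r(G)$ is not enough — instead observe directly that restricting an optimal WRDF of $K_{1,n}\circ H$ to a single leaf copy together with the center copy, contracting the center copy's contribution, produces a WRDF of a graph containing $H$ as an induced ``local'' piece; more cleanly, when $\gamma_r(H)=2$ invoke $\gamma_r(K_{1,n}\circ H)\ge 2$ from Remark~\ref{CaracterizationWRDN=1} and when $\gamma_r(H)=3$ use the $\ge 3$ bound just established).

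\textbf{Main obstacle.}
The delicate point is the lower bound argument showing weight $3$ is impossible in case (iii) (and weight $2$ impossible in general): one must verify that no clever distribution of two or three legions among $H_c$ and the leaf copies can survive an adversarial attack, which requires carefully tracking, for each case of where the legions sit, which vertex an attacker would choose and checking that the forced defensive move leaves something undefended. This is exactly the flavour of the multi-case analysis in the proof of Theorem~\ref{ThGeneralizeThe Star}, and I expect it to be the longest part; the uniform bound $\gamma(H)\ge 4$ is what makes each case close, since it guarantees that one or two legions inside a single copy of $H$ never dominate it and never leave it safe after losing a legion to a move.
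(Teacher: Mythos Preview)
There is a genuine gap in your lower-bound argument. You claim to show $\gamma_r(K_{1,n}\circ H)\ge 3$ whenever $H$ is noncomplete, but this is false: by part (i) itself, if $\gamma_r(H)=2$ then $\gamma_r(K_{1,n}\circ H)=2$. The hypothesis $\gamma_r(H)\ge 3$ is essential here, and your case analysis does not use it.

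More concretely, your handling of the case $f(H_c)=2$, all $f(H_{u_i})=0$, is incorrect. You argue that after moving a legion from $H_c$ to an attacked vertex of a leaf copy $H_i$, some vertex of that leaf copy becomes undefended. It does not: every vertex of $H_i$ is adjacent to every vertex of $H_c$, and $H_c$ still carries one legion after the move, so all of $H_i$ (and every other $H_j$) remains defended; moreover every vertex of $H_c$ is adjacent to the newly placed legion in $H_i$. Attacks on leaf copies are therefore \emph{always} survivable in this configuration, regardless of $H$.

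The actual obstruction is an attack on $H_c$ itself. Since all leaf copies carry weight $0$, a vertex $(u_0,h)$ with $f(u_0,h)=0$ can only be defended by moving a legion within $H_c$, and after such a move a vertex $(u_0,h'')$ is undefended in $K_{1,n}\circ H$ if and only if it is undefended in the copy of $H$. Hence the restriction $f|_{H_c}$ must be a WRDF on $H$ of weight $2$, forcing $\gamma_r(H)\le 2$. This is precisely the paper's argument (stated tersely as ``$f(H_{u_0})\ge 2$ and, if $\gamma_r(H)\ge 3$, then $w(f)\ge 3$''), and it is where the hypothesis $\gamma_r(H)\ge 3$ enters.

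Your upper-bound constructions and your plan for (iii) (redoing the four-case analysis of Theorem~\ref{ThGeneralizeThe Star}) are fine; the paper simply invokes Theorem~\ref{BoundTotalDomination} for $\le 4$ and Theorem~\ref{ThGeneralizeThe Star} directly for (iii), since $K_{1,n}$ satisfies its hypotheses.
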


\begin{proof}
Let $ u_0$ be a universal vertex of $K_{1,n}$.  By Remark \ref{CaracterizationWRDN=1} we have that $\gamma_r(K_{1,n} \circ H)\ge 2$ and by  Theorem \ref{BoundTotalDomination}, $\gamma_r(K_{1,n}\circ H)\le 2\gamma_t(K_{1,n})=4.$

Let  $g$ be  a $\gamma_r(H)$-function. Assume that $\gamma_r(H)\in \{2,3\}$. The function
$f: V(K_{1,n}\circ H)\longrightarrow \{0,1,2\}$ defined by $f(u_0,v)=g(v)$, for every $v\in V(H)$, and $f(u,v)=0$, for every $u\in V(K_{1,n})\setminus \{u_0\}$ and  $v\in V(H)$, 
is a WRDF of $K_{1,n}\circ H$, which implies that $\gamma_r(K_{1,n}\circ H)\le\gamma_r(H).$  Hence, if $\gamma_r(H)=2$, then we are done. Since $n\ge 3$,  
for any $\gamma_r(K_{1,n}\circ H)$-function  we have $f(H_{u_0})\ge 2$ and, if $\gamma_r(H)\ge 3$, then  $w(f)\ge 3$. Thus, (i) and (ii) follow. 

%

Finally, if  $\gamma(H)\ge 4$, then Theorem \ref{ThGeneralizeThe Star} leads to $\gamma_r(K_{1,n}\circ H)= 4$.
\end{proof}

We will now show that  the bound given in Corollary \ref{UpperBound2n/3} is tight. To this end, we need to introduce some additional notation.  
Given a graph $G$, let $\mathcal{P}_3(G)$ be the family of ordered sets $S=\{x_1,x_2,x_3  \} \subset V(G)$ such that $\langle S \rangle \cong P_3$,   $\delta (x_1)\geq 2$, $\delta (x_2)=2$ and $\delta (x_3)=1$.


\begin{lemma}
\label{lemma:p3}
Let $G$ and $H$ be two graphs, and $\{x_1,x_2,x_3\}\in  \mathcal{P}_3(G)$. If $\gamma(H) \ge 4$, then for any  $\gamma_r(G\circ H)$-function $f$,
 $$\sum_{i=1}^3f(H_i) = 4.$$
 Furthermore, 
 there exists a $\gamma_r(G\circ H)$-function $f$, such that
 $f(H_2) = 2$ and $f(H_3)  = 0.$
\end{lemma}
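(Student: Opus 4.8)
Let $\{x_1,x_2,x_3\}\in\mathcal P_3(G)$, so $\langle\{x_1,x_2,x_3\}\rangle\cong P_3$ with $x_2$ the central vertex, $\delta(x_1)\ge 2$, $\delta(x_2)=2$, $\delta(x_3)=1$. The key structural fact is that the only neighbour of $x_3$ in $G$ is $x_2$, and the only neighbours of $x_2$ are $x_1$ and $x_3$. The plan is to first establish the lower bound $\sum_{i=1}^3 f(H_i)\ge 4$ for every $\gamma_r(G\circ H)$-function $f$, then the matching upper bound by exhibiting an explicit function, and finally to refine the upper-bound construction so that it has the stated shape $f(H_2)=2$, $f(H_3)=0$.

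\medskip\noindent\textbf{The lower bound.} I would apply Lemma~\ref{Image-one-copy-H} to $x_3$: since $N[x_3]=\{x_2,x_3\}$, we get $f(H_2)+f(H_3)=f[H_{x_3}]\ge 2$. If this sum is $\ge 4$ we are done (as $f(H_1)\ge 0$), so assume $f(H_2)+f(H_3)\in\{2,3\}$ and aim for a contradiction unless $f(H_1)$ makes up the difference. The heart of the matter is a case analysis on how these few legions are distributed over $H_2$ and $H_3$, using $\gamma(H)\ge 4$ exactly as in the proof of Theorem~\ref{ThGeneralizeThe Star}: with at most $3$ legions total on $H_2\cup H_3$, no single copy can be internally dominated by a weight-$2$ assignment, and the defending move of a legion from one copy into a zero-vertex of $H_3$ (which is only adjacent, outside its copy, to vertices of $H_2$) must not create undefended vertices in $H_3$. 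Concretely: (a) if $f(H_3)\ge 1$ then since $\gamma(H)\ge 4>f(H_3)$ some vertex of $H_3$ is undefended unless it is dominated from $H_2$, forcing $f(H_2)\ge 1$; chasing the defending moves shows $f(H_2)+f(H_3)\ge 4$; (b) if $f(H_3)=0$, then every vertex of $H_3$ is a zero-vertex needing a defender from $H_2$, and after the prescribed move $f'(H_3)=1<\gamma(H)$ leaves $H_3$ with undefended vertices unless $f(H_2)$ is large enough that two legions remain available, again forcing $f(H_2)\ge 4$ or else $f(H_1)+f(H_2)\ge 4$ via the edge $x_1x_2$ — here one uses that $x_2$'s only other neighbour is $x_1$. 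In all subcases one concludes $f(H_1)+f(H_2)+f(H_3)\ge 4$. I expect this case analysis to be the main obstacle, since it must faithfully mimic the four-case argument of Theorem~\ref{ThGeneralizeThe Star} while tracking the interaction with $H_1$.

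\medskip\noindent\textbf{The upper bound and the refined function.} For the upper bound, and simultaneously the ``furthermore'' claim, I would start from an arbitrary $\gamma_r(G\circ H)$-function $f$ and modify it on $H_1\cup H_2\cup H_3$ without increasing the weight. Let $g$ be an optimal placement witnessing $\gamma_r(K_n\circ H)\le$ something small is not needed; instead, directly define $f'$ agreeing with $f$ outside $\{x_1,x_2,x_3\}\times V(H)$, with $f'(H_2)=2$ concentrated on two vertices of $H_2$ (so $H_2$ is internally secure and, since $x_1\sim x_2$ and $x_2\sim x_3$, every copy $H_3$ and $H_1$ gets a dominator and every zero-vertex of $H_2$ is defended by moving one of its two legions while the other stays), $f'(H_3)=0$, and $f'(H_1)=2$ — placed so that $H_1$ is internally dominated and defensible, which is possible because $\delta(x_1)\ge 2$ gives $x_1$ an outside neighbour helping to defend $H_1$ if needed, but in fact two well-placed legions in $H_1$ suffice. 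A short check that $f'$ is a WRDF (no undefended vertices after any single defending move — the only moves to worry about are into zero-vertices of $H_1,H_2,H_3$ and their outside neighbours in $N[x_i]$) completes it, and since $w(f')\le w(f')$ restricted appropriately equals $4$ on these three copies while matching $f$ elsewhere, minimality of $f$ forces $\sum_{i=1}^3 f(H_i)\le 4$. Combining with the lower bound gives equality for all $\gamma_r(G\circ H)$-functions, and $f'$ itself is the promised function with $f(H_2)=2$, $f(H_3)=0$.
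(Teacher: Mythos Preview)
Your overall architecture matches the paper's: prove $\sum_{i=1}^3 f(H_i)\ge 4$ by assuming the sum is at most $3$ and locating a forced defending move that creates an undefended vertex, then exhibit a $\gamma_r$-function with $f(H_1)=f(H_2)=2$, $f(H_3)=0$ to obtain equality and the ``furthermore'' clause simultaneously. The construction half is essentially what the paper does (and the paper is equally terse there).

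The lower-bound case analysis, however, contains two genuine errors as written.

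\textbf{Case (a).} Your claim that $f(H_3)\ge 1$ forces $f(H_2)+f(H_3)\ge 4$ is false when $f(H_3)=1$: the configuration $f(H_1)=1,\ f(H_2)=2,\ f(H_3)=1$ admits a valid defence of every zero-vertex in $H_2\cup H_3$ (a legion from $H_2$ can move into $H_3$ leaving $f'(H_2)=1$, which still dominates all of $H_3$; and the legion in $H_1$ handles the awkward vertices of $H_2$). So one must track $f(H_1)$ in this subcase; the bound on $f(H_2)+f(H_3)$ alone does not suffice.

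\textbf{Case (b).} With $f(H_3)=0$, moving a legion from $H_2$ into a zero-vertex of $H_3$ does \emph{not} leave $H_3$ undefended: every vertex of $H_3$ is adjacent to all of $H_2$, and $f'(H_2)\ge f(H_2)-1\ge 1$ remains. The contradiction actually comes from a vertex of $H_2$ that is not dominated by the at most three legions inside $H_2$: since $N(x_2)=\{x_1,x_3\}$ and $f(H_3)=0$, its only possible defender lies in $H_1$, and moving that legion into $H_2$ leaves an undefended vertex in $H_2$ whenever $f(H_1)\le 1$ and $f(H_2)\le 2$.

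The paper avoids both pitfalls by organising the case split according to $f(H_1)\in\{0,1,2,3\}$ (under the hypothesis $\sum\le 3$) and in each case naming the specific vertex whose only available defending move produces an undefended vertex. Reorganising your analysis along that axis closes the gaps.
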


\begin{proof}
 Suppose that there exists    a $\gamma_r(G\circ H)$-function $f$  with $$\sum_{i=1}^3f(H_i)  \le 3.$$ 
 We differentiate the following cases according to the  value of $f(H_1)$. 

\begin{enumerate}[1.]

\item $ f(H_1)=0$. If $f(H_2)=0$ (resp. $f(H_3)=0$), then there is an undefended vertex in  $H_3$ (resp. $H_2$). If $f(H_2)=1$ (resp. $f(H_3)=1$), then the movement of the legion from $H_2$ to $H_3$ (resp. from $H_3$ to $H_2$) produces an undefended vertex in  $H_3$   (resp. from $H_2$).

\item  $f(H_1)=1$. If $f(H_2)=0$, then there is an undefended vertex in  $H_3$. If $f(H_2)=1$, then the movement of the legion from $H_2$ to $H_3$ produces an undefended vertex in  $H_3$. Finally, If $f(H_2)=2$, then the movement of the legion from $H_1$ to $H_2$ produces an undefended vertex in  $H_2$. 

\item  $ f(H_1)=2$. If $f(H_2)=0$, then there is an undefended vertex in  $H_3$. If $f(H_2)=1$, then the movement of the legion from $H_2$ to $H_3$ produces an undefended vertex in  $H_3$. 

\item  $f(H_1)=3$. In this case the vertices in $H_3$ are 
 undefended. 
\end{enumerate}

In all cases above we obtain a contradiction, which implies that $f(H_1)+f(H_2)+f(H_3)\ge 4$. 
To conclude the proof we only need to observe that we can construct a $\gamma_r(G\circ H)$-function  $f$ with  $f(H_1)+f(H_2)+f(H_3) =4$, as we can take $f(H_1)=f(H_2)=2$ and  $f(H_3) =0$. 
\end{proof}

We will now prove that there exists a family of trees $T_n$, which we will call \textit{combs}, such that for any graph   $H$   with $\gamma(H)\ge 4$, $\gamma_r(T_n \circ H) = 2\left\lfloor\frac{2n}{3}\right\rfloor.$ With this end we will now describe this family of combs. Take a path $P_k$ of length $k=\lceil \frac{n}{3} \rceil$, with vertices $v_1, \ldots, v_k$, and attach a path $P_3$ to each vertex $v_1, \ldots, v_{k-1}$, by identifying each $v_i$ with a leaf of its corresponding copy of $P_3$. Finally, we attach a path of length $r = n - 3\lceil \frac{n}{3} \rceil +2$ to $v_k$. Notice  that 

\begin{displaymath}
n - 3\left\lceil \frac{n}{3} \right\rceil + 2 = 
\begin{cases}
0 \mbox{ if } n \equiv 1 \, (\mbox{mod } 3); \\
1 \mbox{ if } n \equiv 2 \, (\mbox{mod } 3); \\
2 \mbox{ if } n \equiv 0 \, (\mbox{mod } 3). 
\end{cases}
\end{displaymath}
Figure \ref{fig:tn} shows the construction of $T_n$ for different values of $n$. Notice that the comb of order six  is simply $T_6\cong P_6$.


\begin{figure}[h]
\centering
\begin{tikzpicture}[transform shape, inner sep = .7mm]

\foreach \k in {0,5,10}{
\foreach \i in {1,2,3}{
\foreach \j in {1,2,3,4}{
	\node [draw=black, shape=circle, fill=black]  at  (\i+\k,\j) {};
}
}
\foreach \i in {1,2,3,4}{
	\draw (1+\k,\i)--(2+\k,\i)--(3+\k,\i);
}
\draw (1+\k,4)--(1+\k,3)--(1+\k,2);
\draw (1+\k,1)--(1+\k,0);
\node [] at (1+\k,1.60) {$\vdots$};
}

\node [draw=black, shape=circle, fill=black]  at  (1,0) {};
\node [draw=black, shape=circle, fill=black]  at  (6,0) {};
\node [draw=black, shape=circle, fill=black]  at  (7,0) {};
\node [draw=black, shape=circle, fill=black]  at  (11,0) {};
\node [draw=black, shape=circle, fill=black]  at  (12,0) {};
\node [draw=black, shape=circle, fill=black]  at  (13,0) {};

\draw (6,0)--(7,0);
\draw (11,0)--(12,0)--(13,0);

\end{tikzpicture}

\caption{$T_n$ for $r = 1, 2, 0$.}
\label{fig:tn} 
\end{figure}
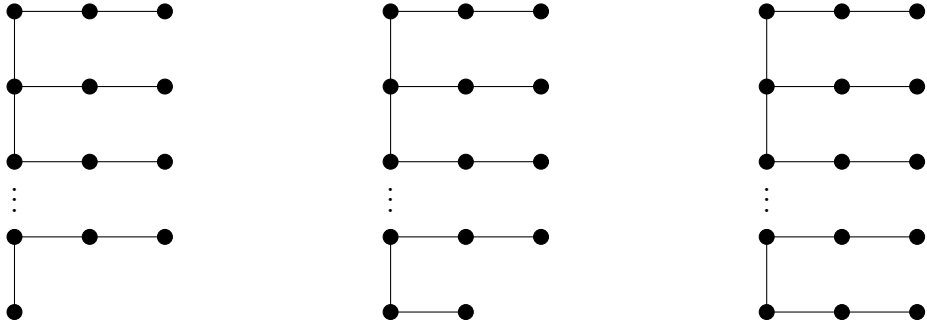

\begin{proposition} \label{PropositionPeines}
For any $n\ge 4$ and any graph   $H$   with $\gamma(H)\ge 4$,
$$\gamma_r(T_n \circ H) = 2\left\lfloor\frac{2n}{3}\right\rfloor.$$ 
\end{proposition}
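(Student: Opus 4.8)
The plan is to prove the two inequalities $\gamma_r(T_n\circ H)\le 2\lfloor 2n/3\rfloor$ and $\gamma_r(T_n\circ H)\ge 2\lfloor 2n/3\rfloor$ separately. The upper bound is immediate from Corollary~\ref{UpperBound2n/3}, since $T_n$ is a connected graph of order $n\ge 4$; alternatively one can use Theorem~\ref{BoundTotalDomination} together with the fact that $\gamma_t(T_n)=\lfloor 2n/3\rfloor$, which can be checked directly from the comb structure by placing a total dominating set at the degree-three spine vertices $v_1,\dots,v_{k-1}$ together with one neighbour each, and adjusting for the tail of length $r$. So the substance of the proof is the lower bound.

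For the lower bound, the idea is to decompose $V(T_n)$ into the $k-1$ induced copies of $P_3$ hanging from $v_1,\dots,v_{k-1}$ plus the terminal path of length $r$ attached at $v_k$, and to bound from below the weight that any $\gamma_r(T_n\circ H)$-function $f$ must assign to the union of the corresponding copies of $H$. Each hanging $P_3$ is an ordered triple $\{x_1,x_2,x_3\}$ with $\delta(x_1)\ge 2$, $\delta(x_2)=2$, $\delta(x_3)=1$, i.e.\ an element of $\mathcal{P}_3(G)$ with $G=T_n$; the spine vertex $v_i$ plays the role of $x_1$. Hence Lemma~\ref{lemma:p3} applies and forces $f(H_{x_1})+f(H_{x_2})+f(H_{x_3})\ge 4$ for each such triple. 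The key point is that the vertex sets of these triples are pairwise disjoint and, crucially, the ``$H_{x_1}$'' copy (the spine vertex) is shared between consecutive triples only in the sense that we must be careful not to double-count its weight. To handle this, I would instead argue that the weight contributed by each pendant-$P_3$-block $\{$the two non-spine vertices $x_2,x_3$ of block $i$, plus the spine vertex $v_i\}$ is at least $4$, and that counting the spine weight once per block does not overcount because the blocks partition all of $V(T_n)$ except the terminal path; more precisely, I would group $V(T_n)$ as $\bigcup_{i=1}^{k-1}\{v_i,x_2^{(i)},x_3^{(i)}\}\;\cup\;(\text{tail at }v_k)$, a genuine partition, and show each group forces weight $\ge 4$ except possibly the last, which forces weight $\ge $ an amount depending on $r\in\{0,1,2\}$, via Lemma~\ref{lemma:p3} again (when $r=2$ the tail together with $v_k$ is another $P_3$-type configuration) or via Lemma~\ref{Image-one-copy-H} and a direct small-case analysis when $r\in\{0,1\}$.

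Summing these block bounds gives $\gamma_r(T_n\circ H)=w(f)\ge 4(k-1)+c_r$ where $k=\lceil n/3\rceil$ and $c_r$ accounts for the tail; the final step is the elementary arithmetic check that $4(k-1)+c_r=2\lfloor 2n/3\rfloor$ in each of the three residue classes $n\equiv 0,1,2\pmod 3$. For instance, when $n\equiv 1$, $r=0$, $k=(n-1)/3+1$ and we need $c_0$ so that $4(k-1)+c_0=2\lfloor 2n/3\rfloor$; one computes $\lfloor 2n/3\rfloor=(2n-2)/3$ here, giving the required value of $c_0$, and similarly for the other cases. Together with the matching upper bound this yields the equality.

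The main obstacle I anticipate is the correct bookkeeping for the terminal path of length $r$ attached at $v_k$: Lemma~\ref{lemma:p3} is stated for a pendant $P_3$ whose internal vertex has degree exactly $2$ and whose attachment vertex has degree $\ge 2$, and near the end of the path these degree hypotheses can fail (e.g.\ $v_k$ itself, or the short tail when $r\le 1$). So I would need a separate, careful case analysis of the last few vertices — using Lemma~\ref{Image-one-copy-H} to guarantee $f[H_u]\ge 2$ for each $u$ and tracking how legion movements near a leaf-copy of $H$ (which needs weight at least $\gamma(H)\ge 4$ worth of domination within its own copy if it is to be defended internally) propagate — to pin down the exact constant $c_r$. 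This tail analysis, rather than the generic pendant-$P_3$ argument, is where the real work lies.
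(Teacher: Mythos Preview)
Your approach is correct and matches the paper's: the upper bound via Corollary~\ref{UpperBound2n/3}, and the lower bound by summing Lemma~\ref{lemma:p3} over the pairwise disjoint pendant $P_3$'s together with a small tail contribution. Your anticipated obstacles are not real: each spine vertex $v_i$ lies in exactly one triple $\{v_i,x_2^{(i)},x_3^{(i)}\}$, so these triples genuinely partition $V(T_n)$ minus the tail and there is no double-counting; the degree hypotheses of Lemma~\ref{lemma:p3} hold verbatim for every such triple (and for the tail triple when $r=2$, since $\delta(v_k)=2$ there); and when $r=1$ a single application of Lemma~\ref{Image-one-copy-H} to the leaf of the tail already gives $c_1=2$, while $r=0$ needs $c_0=0$ --- no deeper case analysis is required.
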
 

\begin{proof}
By Corollary \ref{UpperBound2n/3} we have $\gamma_r(T_n \circ H) \le 2\left\lfloor\frac{2n}{3}\right\rfloor.$ In order to show that $\gamma_r(T_n \circ H) \ge 2\left\lfloor\frac{2n}{3}\right\rfloor$ we differentiate three cases.

 If $n =3k$, then Lemma \ref{lemma:p3} leads to $\gamma_r(T_n \circ H) = 4k= 2\left\lfloor\frac{2n}{3}\right\rfloor$. Now, if  $n=3(k-1)+1$, then Lemma \ref{lemma:p3} leads to $\gamma_r(T_n \circ H) \ge  4(k-1)= 2\left\lfloor\frac{2n}{3}\right\rfloor$. Finally, 
 if  $n=3(k-1)+2$, then Lemma \ref{lemma:p3} leads to $\gamma_r(T_n \circ H) \ge  4(k-1)+2= 2\left\lfloor\frac{2n}{3}\right\rfloor$.
\end{proof}

Given a graph $G$, let $\mathcal{P}_4(G)$ be  the family of ordered sets $S=\{x_1,x_2,x_3,$ $x_4 \} \subset V(G)$ such that $\langle S \rangle \cong P_4$,   $\delta (x_1)\geq 2$, $\delta (x_2)=\delta (x_3)=2$ and $\delta (x_4)\geq 2$.
For any $G$ such that $\mathcal{P}_4(G)\ne \emptyset$ we define the family $\mathcal{O}_4(G)$ of graphs $G^*$ constructed from $G$ as follows. Let $S\in \mathcal{P}_4(G) $ such that $\langle S\rangle =P_4=(x_1,x_2,x_3,x_4)$, $X=\{x_1x_2,x_2x_3,x_3x_4\}$ and $Y=\{ab:\, a\in N(x_1)\setminus \{x_2\} \text{ and } b\in N(x_4)\setminus \{x_3\}\}$. The vertex set of $G^*$ is  $V(G^*)=V(G)\setminus S$
 and the edge set is $E(G^*)=\left(E(G)\setminus X\right) \cup Y$.

\begin{theorem}\label{pathweightfour}
Let $G$ be a graph such that $\mathcal{P}_4(G)\ne \emptyset$ and let $H$ be a graph. If  $\gamma(H)\geq 4$, then for any $G^* \in \mathcal{O}_4(G)$,
$$\gamma_r(G\circ H)=\gamma_r(G^*\circ H)+4.$$
\end{theorem}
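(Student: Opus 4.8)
The plan is to prove the two inequalities $\gamma_r(G\circ H)\le\gamma_r(G^*\circ H)+4$ and $\gamma_r(G\circ H)\ge\gamma_r(G^*\circ H)+4$ separately. Throughout, fix $S=\{x_1,x_2,x_3,x_4\}\in\mathcal{P}_4(G)$ with $\langle S\rangle=P_4=(x_1,x_2,x_3,x_4)$, and recall that $G^*$ is obtained by deleting $S$ and adding the edges $Y=\{ab:\ a\in N(x_1)\setminus\{x_2\},\ b\in N(x_4)\setminus\{x_3\}\}$. The intuition behind the ``$+4$'' is exactly Lemma~\ref{lemma:p3}: the middle vertices $x_2,x_3$ have degree $2$, so in $G\circ H$ any $\gamma_r$-function must spend at least $4$ legions on the copies $H_1,H_2,H_3,H_4$ (in fact the analogue of Lemma~\ref{lemma:p3} for $P_4$-segments), and conversely $4$ legions placed on $H_2$ and $H_3$ suffice to defend $H_1,\dots,H_4$ while mimicking, through the edges of $Y$, whatever protection $x_1$ and $x_4$ gave each other in $G$.

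\textbf{Upper bound.} First I would take a $\gamma_r(G^*\circ H)$-function $g$ and extend it to a WRDF $f$ on $G\circ H$ by keeping $g$ on $V(G^*)\times V(H)=(V(G)\setminus S)\times V(H)$ and setting $f(H_2)=f(H_3)=2$ with $f(H_1)=f(H_4)=0$, say by placing the two legions in $H_2$ (resp.\ $H_3$) on two vertices forming a dominating set of $H$ or, if $\gamma(H)\ge 4$ forces more care, by using the fact that $2$ legions on one copy dominate that whole copy and its neighbouring copies in $G\circ H$. The only thing to check is that $f$ is a WRDF: vertices of $H_1$ are defended because $x_1\sim x_2$ and $f(H_2)=2$, so a legion can move from $H_2$ into $H_1$, leaving $H_2$ with one legion and $H_3$ still at $2$, hence no undefended vertex; symmetrically for $H_4$ via $H_3$; and vertices in $H_{x_2}$, $H_{x_3}$ and in copies over $V(G^*)$ adjacent (in $G$) only to vertices outside $S$ are handled by $g$ itself, using that any edge $ab\in Y$ that $g$ relied on corresponds in $G$ to the path $a\sim x_1\sim x_2\sim x_3\sim x_4\sim b$, along which a defensive legion move can be simulated because $f(H_2)=f(H_3)=2$. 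This gives $\gamma_r(G\circ H)\le w(g)+4=\gamma_r(G^*\circ H)+4$.

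\textbf{Lower bound.} This is the main obstacle. Let $f$ be a $\gamma_r(G\circ H)$-function. The goal is to produce a WRDF $g$ on $G^*\circ H$ with $w(g)\le w(f)-4$. The first step is to show, using arguments in the style of Lemma~\ref{lemma:p3} and the hypothesis $\gamma(H)\ge4$ together with $\delta(x_2)=\delta(x_3)=2$, that $f(H_1)+f(H_2)+f(H_3)+f(H_4)\ge 4$; one analyzes cases on the distribution of legions over these four copies and on $N[x_2]=\{x_1,x_2,x_3\}$, $N[x_3]=\{x_2,x_3,x_4\}$, deriving in each deficient case an undefended vertex in $H_2$ or $H_3$ after at most one legion move. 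Then I would define $g$ to agree with $f$ on $(V(G)\setminus S)\times V(H)$, and argue $w(g)=w(f)-\sum_{i=1}^4 f(H_i)\le w(f)-4$. The delicate point is verifying $g$ is a WRDF of $G^*\circ H$: a vertex $(a,h)$ with $a\in N(x_1)\setminus\{x_2\}$ that in $G\circ H$ was defended using the copies $H_1,\dots,H_4$ must now be defended within $G^*\circ H$, where $a$ has gained all the edges to $N(x_4)\setminus\{x_3\}$. Here one shows that because at least $4$ legions sat on $H_1\cup\cdots\cup H_4$, the copy $H_1$ or $H_4$ was ``secured'' in a way that, transported across the new edge $ab\in Y$, lets $(b,h')\in H_b$ play the role formerly played by a vertex of $H_1$ or $H_4$; the case analysis mirrors the four cases in Lemma~\ref{lemma:p3} and in Theorem~\ref{ThGeneralizeThe Star}, and is where the length of the proof (flagged in the introduction as going into Section~\ref{Proofs}) comes from. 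Combining the two bounds yields $\gamma_r(G\circ H)=\gamma_r(G^*\circ H)+4$.
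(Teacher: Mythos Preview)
Your outline captures the two-inequality structure and the analogy with Lemma~\ref{lemma:p3}, but both directions contain a genuine gap, and the fix is precisely what drives the length of the paper's Section~\ref{Proofs}.

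\textbf{Upper bound.} Your single placement $f(H_2)=f(H_3)=2$, $f(H_1)=f(H_4)=0$ does not always yield a WRDF on $G\circ H$. Take $a\in N(x_1)\setminus\{x_2\}$ with $\deg_G(a)=1$ (nothing in the hypotheses forbids this). In $G^*$ the only neighbours of $a$ are the vertices of $N(x_4)\setminus\{x_3\}$, all coming from $Y$; so a $\gamma_r(G^*\circ H)$-function $g$ may well have $g(H_a)=0$ while the protection of $H_a$ lives entirely on $B_4$. In $G$, however, $a$ is adjacent only to $x_1$, so with your $f$ one gets $f[H_a]=f(H_a)+f(H_1)=0$ and every vertex of $H_a$ is undefended. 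The suggestion that the move ``can be simulated'' along the path $a\sim x_1\sim\cdots\sim x_4\sim b$ does not help: a WRDF permits a single one-step move, and no legion on $H_2$ or $H_3$ is adjacent to $H_a$. The paper handles this by letting the placement of the four extra legions depend on $\alpha_1=g(B_1)$ and $\alpha_4=g(B_4)$, distinguishing six cases; in particular, when $\alpha_4\ge 2$ and $\alpha_1=0$ the legions go on $H_1$ and $H_2$ rather than on $H_2$ and $H_3$.

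\textbf{Lower bound.} Here the problem is symmetric. First, $\sum_{i=1}^4 f(H_i)\ge 4$ need not hold for an arbitrary $\gamma_r(G\circ H)$-function; the paper secures it by invoking Lemma~\ref{twoouterweights} to pass to a $\gamma_r$-function with $\sum_{u'\in N(u)}f(H_{u'})\ge 2$ for every $u$ (applied at $x_2$ and $x_3$ this gives $f(H_1)+f(H_3)\ge 2$ and $f(H_2)+f(H_4)\ge 2$). More seriously, the plain restriction $g=f|_{(V(G)\setminus S)\times V(H)}$ is in general not a WRDF on $G^*\circ H$: if $f(H_1)\ge 2$ while $f(B_4)=0$, then a vertex $(a,h)$ that was defended in $G\circ H$ through $H_1$ has, in $G^*\circ H$, gained only edges to $B_4$, which carries no weight under $g$. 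Your phrase ``transported across the new edge $ab\in Y$'' has no mechanism behind it, since $g(H_b)=f(H_b)$ is unchanged. The paper's remedy is again a case split on $(f(B_1),f(B_4))$: in the bad cases it first shows, via Lemma~\ref{twoouterweights}, that $f(S\times V(H))\ge 5$ or $\ge 6$, and then \emph{adds} one or two legions to $B_1$ and/or $B_4$ when building $g_1$, still achieving $w(g_1)\le w(f)-4$. Without this redistribution step the argument cannot close.
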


\begin{proof}We will prove this result in Section \ref{Proofs}.
\end{proof}

A simple case analysis shows that for $n\in \{3,4,5,6\}$ and  any graph $H$ such that $\gamma( H)\ge 4$ we have $\gamma_r(C_n\circ H)=n$. Hence, Theorem \ref{pathweightfour} immediately leads to the following corollary. 

\begin{corollary}\label{CorollaryCycles}
Let $n\ge 3$ be an integer and let $H$ be a graph. If  $\gamma(H)\geq 4$, then
$$\gamma_r(C_n\circ H)=n.$$
\end{corollary}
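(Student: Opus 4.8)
The plan is to reduce the cycle $C_n$ for $n\ge 7$ down to a smaller cycle by repeatedly peeling off four consecutive vertices, using Theorem \ref{pathweightfour}, and to bottom out at the base cases $n\in\{3,4,5,6\}$ which are handled by the case analysis mentioned in the text (and which can also be confirmed directly: $\gamma_r(C_n\circ H)\le \gamma_{2,t}(C_n)=n$ for $n\le 6$ by Corollary \ref{CorollaryBound-n}, combined with the $2\rho$-lower bound of Theorem \ref{MainLowerBound} after a short check). So the real content is the inductive step for $n\ge 7$.

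First I would verify that $\mathcal{P}_4(C_n)\ne\emptyset$ for $n\ge 7$. Pick any four consecutive vertices $x_1,x_2,x_3,x_4$ of $C_n$; since every vertex of $C_n$ has degree exactly $2$, the induced subgraph $\langle\{x_1,x_2,x_3,x_4\}\rangle$ is a $P_4$ (it is not a $C_4$ because $n\ge 7>4$), and trivially $\delta(x_1)=\delta(x_2)=\delta(x_3)=\delta(x_4)=2\ge 2$, so $\{x_1,x_2,x_3,x_4\}\in\mathcal{P}_4(C_n)$. Next I would identify the graph $G^*\in\mathcal{O}_4(C_n)$ obtained from this set. With $N(x_1)\setminus\{x_2\}=\{a\}$ and $N(x_4)\setminus\{x_3\}=\{b\}$, where $a$ and $b$ are the two vertices of $C_n$ flanking the chosen $P_4$, the construction deletes $x_1,x_2,x_3,x_4$, removes the three path edges, and adds the single edge $ab$; the result is precisely $C_{n-4}$ (when $n-4\ge 3$; note $a\ne b$ since $n\ge 7$ gives $n-4\ge 3$). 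Hence Theorem \ref{pathweightfour} yields $\gamma_r(C_n\circ H)=\gamma_r(C_{n-4}\circ H)+4$ for all $n\ge 7$, provided $\gamma(H)\ge 4$.

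Now I would run the induction. Writing $n=4q+t$ with $t\in\{3,4,5,6\}$ and $q\ge 0$ (every integer $n\ge 3$ is uniquely of this form), iterating the recurrence $q$ times gives $\gamma_r(C_n\circ H)=\gamma_r(C_t\circ H)+4q$. Since $\gamma_r(C_t\circ H)=t$ by the base case, we conclude $\gamma_r(C_n\circ H)=t+4q=n$, as desired.

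The main obstacle is not any hard estimate but rather the bookkeeping of the reduction: one must check carefully that the particular graph operation defining $\mathcal{O}_4(G)$, when applied to $C_n$, really produces $C_{n-4}$ and not some near-cycle, and that the edge $ab$ being added is genuinely a new edge (i.e.\ $a\not\sim b$ in $C_n$ and $a\ne b$), which is exactly where the hypothesis $n\ge 7$ is used. The other point requiring care is confirming the four base values $\gamma_r(C_n\circ H)=n$ for $n\in\{3,4,5,6\}$ under $\gamma(H)\ge 4$; the upper bound $\le n$ follows from $\delta(C_n)=2$ and Corollary \ref{CorollaryBound-n}, while the matching lower bound $\ge n$ can be obtained from Lemma \ref{Image-one-copy-H} by summing $f[H_u]\ge 2$ over a maximum $2$-packing together with a direct check for these small cycles (for $C_6$, $2\rho(C_6)=4<6$, so here one argues instead that any $\gamma_r$-function must assign total weight at least $2$ to each of the three disjoint ``pair-neighbourhoods,'' or simply invokes the case analysis the authors reference). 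Once these endpoints are pinned down, the induction is immediate.
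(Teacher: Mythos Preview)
Your proposal is correct and follows exactly the paper's approach: handle $n\in\{3,4,5,6\}$ by the case analysis the paper references, and for $n\ge 7$ apply Theorem~\ref{pathweightfour} after checking that four consecutive vertices of $C_n$ lie in $\mathcal{P}_4(C_n)$ and that the resulting $G^*$ is $C_{n-4}$ (your observation that $n\ge 7$ is precisely what guarantees $a\ne b$ and hence a genuine cycle is the detail the paper leaves implicit).

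One small remark on your base-case discussion: the $2\rho$ bound from Theorem~\ref{MainLowerBound} in fact falls short for \emph{all} of $n\in\{3,4,5,6\}$, since $\rho(C_3)=\rho(C_4)=\rho(C_5)=1$ and $\rho(C_6)=2$, giving $2\rho\le 4<n$ in every case. So the lower bound $\gamma_r(C_n\circ H)\ge n$ for these small cycles rests entirely on the direct case analysis exploiting $\gamma(H)\ge 4$, which you do ultimately invoke.
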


It is readily seen that if $\gamma( H)\ge 4$, then 
$\gamma_r(P_2\circ H)=\gamma_r(P_3\circ H)=\gamma_r(P_4\circ H)=4$ and  $\gamma_r(P_5\circ H)=6$ . Therefore, Theorem \ref{pathweightfour}   leads to the following result.

\begin{corollary}\label{CorollaryPaths}
Let $n\ge 2$ be an integer and let $H$ be a graph. If  $\gamma(H)\geq 4$, then
\begin{eqnarray*}\gamma_r(P_n\circ H)= \left \{ \begin{array}{ll}

n, & n\equiv 0\pmod 4 ;
\\
n+2, & n\equiv 2\pmod 4;
\\
n+1, & \text{otherwise}.
\end{array} \right
.\end{eqnarray*}
\end{corollary}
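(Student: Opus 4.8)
The plan is to prove the formula by induction on $n$, using Theorem \ref{pathweightfour} to strip off four consecutive vertices at a time, with the values $n\in\{2,3,4,5\}$ serving as base cases. The base cases are precisely the equalities $\gamma_r(P_2\circ H)=\gamma_r(P_3\circ H)=\gamma_r(P_4\circ H)=4$ and $\gamma_r(P_5\circ H)=6$ recorded immediately before the statement, and one checks directly that they agree with the claimed formula: $2\equiv 2\pmod 4$ gives $2+2=4$, $3\equiv 3\pmod 4$ gives $3+1=4$, $4\equiv 0\pmod 4$ gives $4$, and $5\equiv 1\pmod 4$ gives $5+1=6$. These four values must be isolated as base cases because, as one verifies below, $\mathcal{P}_4(P_n)=\emptyset$ for $n\le 5$.

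For the inductive step, fix $n\ge 6$ and write $P_n$ with consecutive vertices $v_1,v_2,\dots,v_n$. First I would check that $\mathcal{P}_4(P_n)\ne\emptyset$ by taking $S=\{v_2,v_3,v_4,v_5\}$: clearly $\langle S\rangle\cong P_4$, and since $n\ge 6$ none of these four vertices is an endpoint of $P_n$, so each has degree $2$; in particular the requirements $\delta(v_2)\ge 2$, $\delta(v_3)=\delta(v_4)=2$, $\delta(v_5)\ge 2$ hold. Next I would identify one graph $P_n^*\in\mathcal{O}_4(P_n)$ arising from this $S$: with $x_1=v_2,\dots,x_4=v_5$ we have $N(v_2)\setminus\{v_3\}=\{v_1\}$ and $N(v_5)\setminus\{v_4\}=\{v_6\}$, hence $Y=\{v_1v_6\}$; deleting $S$ together with its incident edges from $P_n$ leaves the path $v_6v_7\cdots v_n$ plus the isolated vertex $v_1$, and adjoining the edge $v_1v_6$ produces the path $v_1v_6v_7\cdots v_n$. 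Therefore $P_n^*\cong P_{n-4}$.

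Applying Theorem \ref{pathweightfour} (legitimate since $\gamma(H)\ge 4$) then yields the recursion
$$\gamma_r(P_n\circ H)=\gamma_r(P_{n-4}\circ H)+4\qquad\text{for all }n\ge 6.$$
To close the induction I would observe that the claimed right-hand side, call it $\phi(n)$, satisfies $\phi(n)=\phi(n-4)+4$: passing from $n-4$ to $n$ preserves the residue modulo $4$ and increases the linear term by $4$, so in each of the four congruence classes the value grows by exactly $4$. Combined with the base cases $n\in\{2,3,4,5\}$, this forces $\gamma_r(P_n\circ H)=\phi(n)$ for every $n\ge 2$.

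The proof is short because essentially all the content sits inside Theorem \ref{pathweightfour}. The only point requiring care is the bookkeeping in the definition of $\mathcal{O}_4$ — verifying that after removing $S$ the surviving edges of $P_n$ together with the single new edge $v_1v_6$ really do reassemble into $P_{n-4}$, and confirming that the degree conditions defining $\mathcal{P}_4(P_n)$ genuinely fail for $n\le 5$ (so that those cases cannot be obtained from the recursion and must be taken as base cases). This is routine and hides no difficulty.
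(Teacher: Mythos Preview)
Your proposal is correct and follows essentially the same route as the paper: the paper also takes $n\in\{2,3,4,5\}$ as base cases (recorded just before the corollary) and appeals to Theorem \ref{pathweightfour} for the inductive step, leaving implicit the verification that $P_n^*\cong P_{n-4}$ which you have spelled out carefully. Your added bookkeeping on $\mathcal{P}_4(P_n)$ and the construction of $P_n^*$ is accurate and fills in exactly the details the paper omits.
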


\section{Proof of Theorem \ref{pathweightfour}}\label{Proofs}

To prove Theorem \ref{pathweightfour} we need the following lemma.

\begin{lemma}\label{twoouterweights}
Let $G$ and $H$ be nontrivial connected graphs. If $\gamma(H)\ge 4$, then there exists a $\gamma_r(G\circ H)$-function $f$ such that  $\sum_{u'\in N(u)} f(H_{u'})\geq 2,$
for every $u\in V(G)$.
\end{lemma}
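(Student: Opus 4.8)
The plan is to start from an arbitrary $\gamma_r(G\circ H)$-function $f$ and show that if it fails the stated property at some vertex $u$, then we can modify it into another $\gamma_r(G\circ H)$-function that does satisfy the property at $u$ (or at least "improves" the situation in a way that terminates). The key structural fact we already have available is Lemma~\ref{Image-one-copy-H}: for every $u\in V(G)$, $f[H_u]=\sum_{x\in N[u]}f(H_x)\ge 2$. So the only way $\sum_{u'\in N(u)}f(H_{u'})\le 1$ can happen is if almost all of the "weight around $u$" sits inside $H_u$ itself.

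\medskip

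\textbf{First I would} classify the bad vertices. Suppose $\sum_{u'\in N(u)}f(H_{u'})\le 1$. Combined with Lemma~\ref{Image-one-copy-H}, this forces $f(H_u)\ge 1$; and we should analyse the two cases $\sum_{u'\in N(u)}f(H_{u'})=0$ and $=1$. In the first case every neighbouring copy $H_{u'}$ is entirely zero, so $f(H_u)\ge 2$, and since $\gamma(H)\ge 4$ a copy $H_u$ with $f(H_u)\le 3$ cannot dominate itself internally — one checks (as in the proof of Lemma~\ref{lemma:p3} or Theorem~\ref{ThGeneralizeThe Star}) that $f(H_u)\ge 4$ is actually forced here, because vertices of $H_u$ not in $N[\,\cdot\,]$ of the weighted vertices must be dominated from outside, but outside is empty. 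Then I would argue that such a configuration is wasteful: we can move one unit of weight from $H_u$ to a neighbouring copy $H_{u'}$ (placing it on a single vertex) without destroying the WRDF property, because the vertices of $H_{u'}$ were previously dominated only from $H_u$ via the "complete join" between consecutive copies, and after the move they are still dominated (now partly from that single relocated legion, partly from $H_u$ which still has weight $\ge 3$); and the defense/movement condition for vertices in $H_u$ still holds since $H_u$ retains weight $\ge 3\ge 2$. This does not increase the weight, so it yields another $\gamma_r(G\circ H)$-function with strictly larger $\sum_{u'\in N(u)}f(H_{u'})$.

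\medskip

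\textbf{Next I would} handle the case $\sum_{u'\in N(u)}f(H_{u'})=1$, say the single legion sits at $(u',a)$ with $f(u',a)=1$ and $u'\sim u$. Here Lemma~\ref{Image-one-copy-H} only gives $f[H_u]\ge 2$, so $f(H_u)$ could be as small as $1$. But the WRDF condition at undefended-threat vertices of $H_u$ is exactly the obstruction exploited in Cases 2 and 3 of Theorem~\ref{ThGeneralizeThe Star}: if $f(H_u)\le 1$ then moving the lone legion $(u',a)$ to a $0$-vertex of $H_u$ leaves undefended vertices in $H_u$ (since $\gamma(H)\ge 4$), so in fact $f(H_u)\ge 2$, giving $f[H_u]\ge 3$ with at least $2$ units strictly inside $H_u$. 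Again I would relocate: shift one unit of weight from $H_u$ outward onto $H_{u'}$ (or onto the already-occupied vertex $(u',a)$, or onto any neighbour copy), preserving the WRDF property by the same domination-via-join and weight-$\ge 2$-remains-in-$H_u$ reasoning, keeping the weight fixed, and strictly increasing $\sum_{u'\in N(u)}f(H_{u'})$.

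\medskip

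\textbf{The hard part will be} the bookkeeping needed to turn these local improvements into a single function that is good at \emph{every} vertex $u$ simultaneously — a naive "repair one vertex, break another" loop need not terminate. To fix this I would use a potential/extremal argument: among all $\gamma_r(G\circ H)$-functions, choose one maximizing $\sum_{u\in V(G)}\min\{2,\sum_{u'\in N(u)}f(H_{u'})\}$ (or more simply, maximizing the number of copies $H_x$ with $f(H_x)\ge 1$, with ties broken by minimizing $\sum_x f(H_x)^2$ to penalize concentration). Then I would show that any local relocation of the type above strictly increases this potential while keeping the weight optimal, contradicting extremality unless the function is already good at every $u$. The delicate checks are (a) that after relocation the function is still a WRDF — this needs care because moving a legion \emph{out} of $H_u$ could in principle leave a vertex of $H_u$ undefended, which is why we must always leave $f(H_u)\ge 2$ and use that the relocated legion, sitting in a copy completely joined to $H_u$, still dominates all of $H_u$; and (b) verifying that the relocation target copy's internal defense condition is not violated, which holds because that copy gains weight. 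I expect (a), specifically ensuring no vertex of $H_u$ becomes undefended after we remove a unit, to be the crux, and it is handled precisely by first proving (via $\gamma(H)\ge 4$) that $f(H_u)\ge 4$ in the weight-$0$ case and $f(H_u)\ge 2$ in the weight-$1$ case, so that a surplus unit is genuinely available to export.
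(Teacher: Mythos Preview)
Your plan is essentially the paper's proof: case-split on whether the neighbouring weight $\sum_{u'\in N(u)} f(H_{u'})$ is $0$ or $1$, bound $f(H_u)$ from below using $\gamma(H)\ge 4$, and export a legion (or two) from $H_u$ to a neighbouring copy $H_{u'}$. Two remarks are worth making.

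First, in the case $\sum_{u'\in N(u)} f(H_{u'})=1$ your own argument actually yields $f(H_u)\ge \gamma(H)-1\ge 3$, not merely $\ge 2$: the lone outside legion at $(u',a)$ is the \emph{only} possible defender of a vertex $(u,v)$ with $\sum_{h\in N_H[v]} f(u,h)=0$, and after moving it into $H_u$ the weight there becomes $f(H_u)+1$, which must reach $\gamma(H)$ for $H_u$ to be internally dominated. You implicitly rely on this sharper bound when you invoke ``weight $\ge 2$ remains in $H_u$'' after shifting one unit out; with only $f(H_u)\ge 2$ that claim would fail.

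Second, your termination worry dissolves once the sharper bounds are in hand ($f(H_u)\ge 4$ when the outside weight is $0$, and $f(H_u)\ge 3$ when it is $1$). After the repair one always has $f_1(H_u)\ge 2$; but then for every $w\in N_G(u)$ we get $\sum_{z\in N(w)} f_1(H_z)\ge f_1(H_u)\ge 2$ automatically, so fixing $u$ can never break the condition at any other vertex. A single pass over the bad vertices therefore suffices, and no extremal or potential argument is needed. This is exactly what the paper does (moving two legions at once in the weight-$0$ case so as to land directly at outside weight $2$); it simply omits spelling out why the iteration terminates.
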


\begin{proof}
Let  $u,u'\in V(G)$ such that $u'\in N(u)$ and $v'\in V(H)$. First, suppose that $\sum_{z\in N(u)}  f(H_{z})=f(u',v')=1$. If $f(H_u)<\gamma(H)-1$, then there exists $v\in V(H)$ such that $\sum_{h\in N[v]}f(u,h)=0$, so that the movement of the legion from $(u',v')$ to  $(u,v)$ produces undefended vertices, which is a contradiction. Hence, $f(H_u)\ge \gamma(H)-1\ge 3$ and we can construct a $\gamma_r(G\circ H)$-function $f_1$ from $f$ as follows. For some $(u,v)$ such that $f(u,v)\ge 1$ we set  $f_1(u,v)=f(u,v)-1$, for some $v''\ne v'$ we set   $f_1(u',v'')=1$ and $f_1(x,y)=f(x,y)$ for every $(x,y)\in V(G\circ H)\setminus \{(u,v),(u',v'')\}$. Hence,  $\sum_{z\in N(u)}  f_1(H_{z})=f_1(u',v')+f_1(u',v'')=2$.

Now, if $\sum_{z\in N(u)}  f(H_{z})=0, $  then we proceed as above to construct a $\gamma_r(G\circ H)$-function $f_1$ from $f$ by the movement of two legions from $H_u$ to $(u',v')$. In this  case,  $\sum_{z\in N(u)}  f_1(H_{z})=f_1(u',v')=2$.

For each $u\in V(G)$ such that $\sum_{u'\in N(u)}  f(H_{u'})\le 1$  we can repeat the procedure above until finally obtaining a $\gamma_r(G\circ H)$-function satisfying the result.
\end{proof}

\begin{proof}[The proof of Theorem \ref{pathweightfour}] 
Let $S\in \mathcal{P}_4(G) $ such that $\langle S\rangle\cong P_4=(x_1,x_2,x_3,x_4)$.  We will first show that $\gamma_r(G\circ H)\leq \gamma_r(G^*\circ H)+4$. Let $f$ be a $\gamma_r(G^*\circ H)$-function and define
  $\alpha_1$ and $\alpha_4$ as follows:
$$\alpha_1=\sum_{\begin{array}{c}
 x\in N(x_1)\setminus \{x_2\}\\
\end{array} } f(H_x)\;\;\text{ and }\;\;\alpha_4=\sum_{\begin{array}{c}
 x\in N(x_4)\setminus\{x_3\}\\
\end{array}} f(H_x).$$
We will construct a WRDF $f_1$ on $G\circ H$ from $f$ such that $w(f_1)\leq w(f)+4$. For each vertex $(u,v)\in V(G^*\circ H)$ we set $f_1(u,v)=f(u,v)$ and now we will describe the following six cases for the vertices   $(u,v)\in S\times V(H)$, where symmetric cases are omitted. In all these cases we fix $y\in V(H)$. 

\begin{enumerate}
\item  $\alpha_1\geq 2\text{ and }\alpha_4\geq 2$.  We  set $f_1(x_1,y)=f_1(x_4,y)=2$  and $f_1(u,v)=0$ for every $(u,v)\notin \{(x_1,y),(x_4,y)\}$. 

\item   $\alpha_1\geq 2\text{ and }\alpha_4=1$.
We  set $f_1(x_1,y)=f_1(x_3,y)=1$,  $f_1(x_4,y)=2$ and $f_1(u,v)=0$ for every $(u,v)\notin \{(x_1,y),(x_3,y),(x_4,y)\}$. 

\item$\alpha_1\geq 2\text{ and }\alpha_4=0$.
We   set $f_1(x_3,y)=f_1(x_4,y)=2$ and $f_1(u,v)=0$ for every $(u,v)\notin \{(x_3,y),(x_4,y)\}$.  

\item  $\alpha_1=1\text{ and }\alpha_4=1$.
We  set $f_1(x_1,y)=f_1(x_2,y)=f_1(x_3,y)=f_1(x_4,y)=1$  and $f_1(u,v)=0$ for every $v\ne y$ and $u\notin \{x_1,x_2,x_3,x_4\}$.  

\item  $\alpha_1=1\text{ and }\alpha_4=0$.
We  set $f_1(x_2,y)=f_1(x_4,y)=1$,  $f_1(x_3,y)=2$  and $f_1(u,v)=0$ for every $(u,v)\notin \{(x_2,y),(x_3,y),(x_4,y)\}$.  

\item  $\alpha_1=0\text{ and }\alpha_4=0$. 
We  set $f_1(x_2,y)=f_1(x_3,y)=2$  and $f_1(u,v)=0$ for every $(u,v)\notin \{(x_2,y),(x_3,y)\}$. 
\end{enumerate}

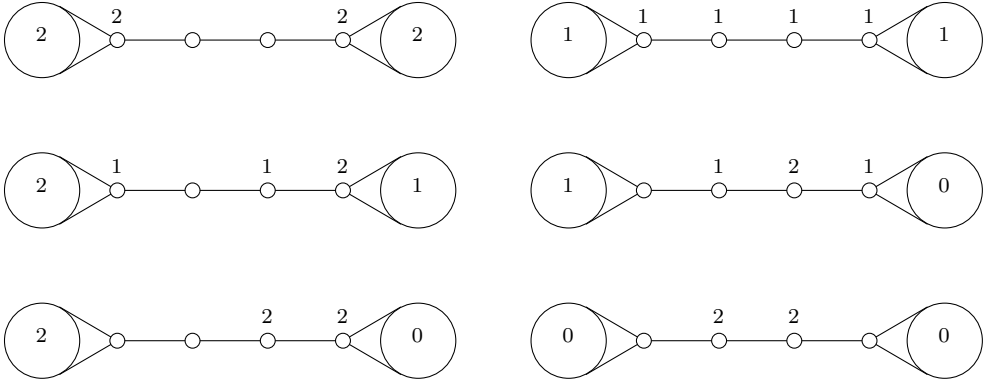
\begin{figure}[h]

\begin{tikzpicture}[transform shape, inner sep = .7mm]
\foreach \j in {2,4,6}
{
\foreach \i in {1,8}
{
\draw(\i+0.23,\j+0.45)--(\i+1,\j);
\draw (\i,\j) circle (0.5);
\draw(\i+0.23,\j-0.45)--(\i+1,\j);
}
\foreach \i in {6,13}
{
\draw(\i-0.23,\j+0.45)--(\i-1,\j);
\draw (\i,\j) circle (0.5);
\draw(\i-0.23,\j-0.45)--(\i-1,\j);
}
\draw(2,\j)--(3,\j)--(4,\j)--(5,\j);
\draw(9,\j)--(10,\j)--(11,\j)--(12,\j);
\foreach \i in {2,3,4,5,9,10,11,12}
{
\node [draw=black, shape=circle, fill=white] at  (\i,\j) {};
}
}

\node [] at (1,6) {$^{2}$};
\node [above] at (2,6) {$^{2}$};
\node [above] at (5,6) {$^{2}$};
\node [] at (6,6) {$^{2}$};

\node [] at (1,4) {$^{2}$};
\node [above] at (2,4) {$^{1}$};
\node [above] at (4,4) {$^{1}$};
\node [above] at (5,4) {$^{2}$};
\node [] at (6,4) {$^{1}$};

\node [] at (1,2) {$^{2}$};
\node [above] at (4,2) {$^{2}$};
\node [above] at (5,2) {$^{2}$};
\node [] at (6,2) {$^{0}$};

\node [] at (8,6) {$^{1}$};
\node [above] at (9,6) {$^{1}$};
\node [above] at (10,6) {$^{1}$};
\node [above] at (11,6) {$^{1}$};
\node [above] at (12,6) {$^{1}$};
\node [] at (13,6) {$^{1}$};

\node [] at (8,4) {$^{1}$};
\node [above] at (10,4) {$^{1}$};
\node [above] at (11,4) {$^{2}$};
\node [above] at (12,4) {$^{1}$};
\node [] at (13,4) {$^{0}$};

\node [] at (8,2) {$^{0}$};
\node [above] at (10,2) {$^{2}$};
\node [above] at (11,2) {$^{2}$};
\node [] at (13,2) {$^{0}$};
\end{tikzpicture}
\caption{The six cases above are described in this scheme.
}
\end{figure}
 A simple case analysis shows that the vertices of $G\circ H$ are defended  by the  assignment of legions produced by $f_1$. 
 Therefore, 
\begin{equation}\label{GHleG*H+4}
\gamma_r(G\circ H)\leq w(f_1)\leq w(f)+4= \gamma_r(G^*\circ H)+4.
\end{equation}

Now we will show that the equality holds. Let $g$ be a $\gamma_r(G\circ H)$-function satisfying Lemma \ref{twoouterweights}. We will construct a WRDF $g_1$ on $G^*\circ H$ from the function $g$ such that $w(g_1)\le w(g)-4$.
 We also need to define $B_1$ and $B_4$ as follows.
$$B_1=\bigcup_{\begin{array}{c}
 x\in N(x_1)\setminus \{x_2\}\\
\end{array} } V(H_x)\;\;\text{ and }\;\;B_4=\bigcup_{\begin{array}{c}
 x\in N(x_4)\setminus\{x_3\}\\
\end{array}} V(H_x) .$$
We define $g_1$ according to the following six cases:

\begin{enumerate}[1'.]
\item $g(B_1)\geq 2$ and $g(B_4)\geq 2$. In this case, we set $g_1(x,y)=g(x,y)$ for every $(x,y)\in V(G^*\circ H)$. 

\item $g(B_1)\geq 2$ and $g(B_4)=1$. Depending on $g(H_{1})$ we will consider the following two cases:

\begin{enumerate}[2'.1]
\item $g(H_{1})\leq 1$.
Since $g(H_{1})\leq g(B_4)$, we can  set $g_1(x,y)=g(x,y)$ for for every $(x,y)\in V(G^*\circ H)$.

\item $g(H_{1})\geq 2$. We will show that $g(S\times V(H))\geq 5$. To see this, we will try to place four  legions   in $S\times V(H)$ as shown in Figure \ref{GtoG*case2}, where $ a+b= 2$ (in Figures \ref{GtoG*case2}-\ref{GtoG*case4c} black vertices represent a contradiction with Lemma \ref{twoouterweights}).
Since  in all these cases we have a contradiction, we can conclude that $g(S\times V(H))\geq 5$.
Hence, we place the legions in the following way: for some $(x_0,y_0)\in B_4$ we set $g_1(x_0,y_0)=g(x_0,y_0)+1$ and $g_1(x,y)=g(x,y)$ for every $(x,y)\in V(G^*\circ H)\setminus \{(x_0,y_0)\}$.

\begin{figure}[!h]
\centering
\begin{tikzpicture}
[transform shape, inner sep = .7mm]

\foreach \j in {-1,1}
{
\foreach \i in {0,7}
{
\draw (\i+1,\j)--(\i+2,\j)--(\i+3,\j)--(\i+4,\j);
}
\foreach \i in {5,12}
{
\draw(\i-0.23,\j+0.45)--(\i-1,\j);
\draw (\i,\j) circle (0.5);
\draw(\i-0.23,\j-0.45)--(\i-1,\j);
}
\foreach \i in {0,7}
{
\draw(\i+0.23,\j+0.45)--(\i+1,\j);
\draw (\i,\j) circle (0.5);
\draw(\i+0.23,\j-0.45)--(\i+1,\j);
}
\foreach \i in {1,2,3,4,8,9,10,11}
{
\node [draw=black, shape=circle, fill=white] at  (\i,\j) {};
}
}

\node [] at (0,1) {$^{2}$};
\node [above] at (1,1) {$^{2}$};
\node [above] at (2,1) {$^{a}$};
\node [above] at (3,1) {$^{0}$};
\node [above] at (4,1) {$^{b}$};
\node [] at (5,1) {$^{1}$};
\node [draw=black, shape=circle, fill=black] at  (4,1) {};

\node [] at (7,1) {$^{2}$};
\node [above] at (8,1) {$^{2}$};
\node [above] at (9,1) {$^{0}$};
\node [above] at (10,1) {$^{2}$};
\node [above] at (11,1) {$^{0}$};
\node [] at (12,1) {$^{1}$};
\node [draw=black, shape=circle, fill=black] at  (10,1) {};

\node [] at (0,-1) {$^{2}$};
\node [above] at (1,-1) {$^{2}$};
\node [above] at (2,-1) {$^{1}$};
\node [above] at (3,-1) {$^{1}$};
\node [above] at (4,-1) {$^{0}$};
\node [] at (5,-1) {$^{1}$};
\node [draw=black, shape=circle, fill=black] at  (3,-1) {};

\node [] at (7,-1) {$^{2}$};
\node [above] at (8,-1) {$^{2}$};
\node [above] at (9,-1) {$^{0}$};
\node [above] at (10,-1) {$^{1}$};
\node [above] at (11,-1) {$^{1}$};
\node [] at (12,-1) {$^{1}$};
\node [draw=black, shape=circle, fill=black] at  (10,-1) {};

\end{tikzpicture}\caption{Scheme corresponding to Case 2'.2. }\label{GtoG*case2}
\end{figure}
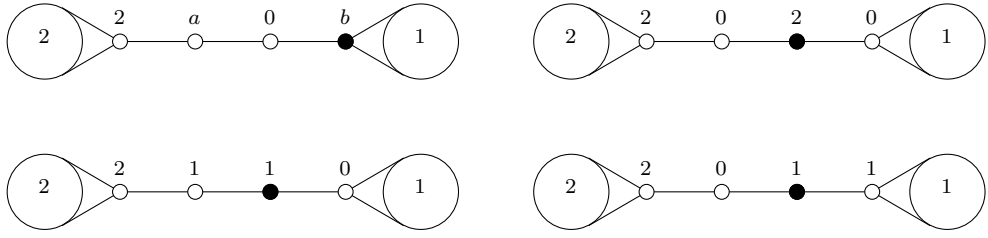 
\end{enumerate}

\item $g(B_1)\geq 2$ and $g(B_4)=0$.  In this case, we consider the following three cases depending on the value of $g(H_{1})$:

\begin{enumerate}[3'.1]
\item $g(H_{1 })=0$. 
In this case $g(H_{1})=g(B_4)$ so we set $g_1(x,y)=g(x,y)$ for every $(x,y)\in V(G^*\circ H)$.

\item $g(H_{1})=1$.
We will show that $g(S \times V(H))\geq 5$. To see this, we will try to place four legions in $S\times V(H)$ as shown in Figure \ref{GtoG*case3b}, where $2\le a+b\le 3$ and $c+d=1$. In both cases we have a contradiction with Lemma \ref{twoouterweights}. Hence, $g(S \times V(H))\geq 5$ and so we place the legions in the following way: for some $(x_0,y_0)\in B_4$ we set $g_1(x_0,y_0)=1$ and $g_1(x,y)=g(x,y)$ for every $(x,y)\in V(G^*\circ H)\setminus \{(x_0,y_0)\}$.

\begin{figure}[!ht]
\centering
\begin{tikzpicture}
[transform shape, inner sep = .7mm]

\foreach \j in {1}
{
\foreach \i in {0,7}
{
\draw (\i+1,\j)--(\i+2,\j)--(\i+3,\j)--(\i+4,\j);
}
\foreach \i in {5,12}
{
\draw(\i-0.23,\j+0.45)--(\i-1,\j);
\draw (\i,\j) circle (0.5);
\draw(\i-0.23,\j-0.45)--(\i-1,\j);
}
\foreach \i in {0,7}
{
\draw(\i+0.23,\j+0.45)--(\i+1,\j);
\draw (\i,\j) circle (0.5);
\draw(\i+0.23,\j-0.45)--(\i+1,\j);
}
\foreach \i in {1,2,3,4,8,9,10,11}
{
\node [draw=black, shape=circle, fill=white] at  (\i,\j) {};
}
}

\node [] at (0,1) {$^{2}$};
\node [above] at (1,1) {$^{1}$};
\node [above] at (2,1) {$^{a}$};
\node [above] at (3,1) {$^{\leq 1}$};
\node [above] at (4,1) {$^{b}$};
\node [] at (5,1) {$^{0}$};
\node [draw=black, shape=circle, fill=black] at  (4,1) {};

\node [] at (7,1) {$^{2}$};
\node [above] at (8,1) {$^{1}$};
\node [above] at (9,1) {$^{c}$};
\node [above] at (10,1) {$^{2}$};
\node [above] at (11,1) {$^{d}$};
\node [] at (12,1) {$^{0}$};
\node [draw=black, shape=circle, fill=black] at  (10,1) {};

\end{tikzpicture}\caption{Scheme corresponding to Case 3'.2.}\label{GtoG*case3b}
\end{figure}
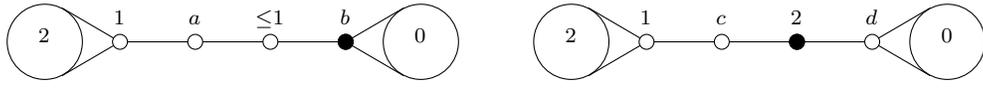

\item $g(H_{1})\geq 2$.
We will show that $g(S\times V(H))\geq 6$. To see this, we will try to place five legions in $S\times V(H)$ as shown in Figure \ref{GtoG*case3c}, where $2\le a+b\le 3$ and $c+d=1$. 
In both cases we have a contradiction with Lemma \ref{twoouterweights}. Hence,  $g(S \times V(H))\geq 6$ and so we place the legions in the following way: for some $(x_0,y_0)\in B_4$ we set $g_1(x_0,y_0)=2$ and $g_1(x,y)=g(x,y)$ for every $(x,y)\in V(G^*\circ H)\setminus \{(x_0,y_0)\}$.

\begin{figure}[!ht]
\centering
\begin{tikzpicture}
[transform shape, inner sep = .7mm]

\foreach \j in {1}
{
\foreach \i in {0,7}
{
\draw (\i+1,\j)--(\i+2,\j)--(\i+3,\j)--(\i+4,\j);
}
\foreach \i in {5,12}
{
\draw(\i-0.23,\j+0.45)--(\i-1,\j);
\draw (\i,\j) circle (0.5);
\draw(\i-0.23,\j-0.45)--(\i-1,\j);
}
\foreach \i in {0,7}
{
\draw(\i+0.23,\j+0.45)--(\i+1,\j);
\draw (\i,\j) circle (0.5);
\draw(\i+0.23,\j-0.45)--(\i+1,\j);
}
\foreach \i in {1,2,3,4,8,9,10,11}
{
\node [draw=black, shape=circle, fill=white] at  (\i,\j) {};
}
}

\node [] at (0,1) {$^{2}$};
\node [above] at (1,1) {$^{2}$};
\node [above] at (2,1) {$^{a}$};
\node [above] at (3,1) {$^{\leq 1}$};
\node [above] at (4,1) {$^{b}$};
\node [] at (5,1) {$^{0}$};
\node [draw=black, shape=circle, fill=black] at  (4,1) {};

\node [] at (7,1) {$^{2}$};
\node [above] at (8,1) {$^{2}$};
\node [above] at (9,1) {$^{c}$};
\node [above] at (10,1) {$^{2}$};
\node [above] at (11,1) {$^{d}$};
\node [] at (12,1) {$^{0}$};
\node [draw=black, shape=circle, fill=black] at  (10,1) {};

\end{tikzpicture}\caption{Scheme corresponding to Case 3'.3. }\label{GtoG*case3c}
\end{figure}
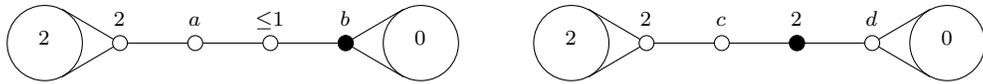
\end{enumerate}

\item $g(B_1)=g(B_4)=1$.
In this case, we consider the following three cases depending on the value of $g(H_{1})$ and $g(H_4)$:

\begin{enumerate}[4'.1]
\item $g(H_{1})\leq 1$ and $g(H_{4})\leq 1$.
In this case $g(H_{1})\leq g(B_4)$ and $g(H_{4})\leq g(B_1)$, so we set $g_1(x,y)=g(x,y)$ for every $(x,y)\in V(G^*\circ H)$.

\item $g(H_{1})\geq 2$ and $g(H_{4})\leq 1$ (this case is symmetric to $g(H_{1})\leq 1$ and $g(H_{4})\geq 2$). 
We will show that $g(S \times V(H))\geq 5$. To see this, we will try to place four legions in $S\times V(H)$ as shown in Figure \ref{GtoG*case4b}, where $a+b=1$. In all cases we have a contradiction with Lemma \ref{twoouterweights}. 
Hence, $g(S \times V(H))\geq 5$ and so we define $g_1$ as follows: for some $(x_0,y_0)\in B_4$ we set $g_1(x_0,y_0)=g(x_0,y_0)+1$ and $g_1(x,y)=g(x,y)$ for every $(x,y)\in V(G^*\circ H)\setminus \{(x_0,y_0)\}$.
\begin{figure}[!ht]
\centering
\begin{tikzpicture}
[transform shape, inner sep = .7mm]

\foreach \j in {-1,1}
{
\foreach \i in {0,7}
{
\draw (\i+1,\j)--(\i+2,\j)--(\i+3,\j)--(\i+4,\j);
}
\foreach \i in {5,12}
{
\draw(\i-0.23,\j+0.45)--(\i-1,\j);
\draw (\i,\j) circle (0.5);
\draw(\i-0.23,\j-0.45)--(\i-1,\j);
}
\foreach \i in {0,7}
{
\draw(\i+0.23,\j+0.45)--(\i+1,\j);
\draw (\i,\j) circle (0.5);
\draw(\i+0.23,\j-0.45)--(\i+1,\j);
}
\foreach \i in {1,2,3,4,8,9,10,11}
{
\node [draw=black, shape=circle, fill=white] at  (\i,\j) {};
}
}

\node [] at (0,1) {$^{1}$};
\node [above] at (1,1) {$^{2}$};
\node [above] at (2,1) {$^{a}$};
\node [above] at (3,1) {$^{b}$};
\node [above] at (4,1) {$^{1}$};
\node [] at (5,1) {$^{1}$};
\node [draw=black, shape=circle, fill=black] at  (1,1) {};
\node [draw=black, shape=circle, fill=black] at  (4,1) {};

\node [] at (7,1) {$^{1}$};
\node [above] at (8,1) {$^{2}$};
\node [above] at (9,1) {$^{2}$};
\node [above] at (10,1) {$^{0}$};
\node [above] at (11,1) {$^{0}$};
\node [] at (12,1) {$^{1}$};
\node [draw=black, shape=circle, fill=black] at  (11,1) {};

\node [] at (0,-1) {$^{1}$};
\node [above] at (1,-1) {$^{2}$};
\node [above] at (2,-1) {$^{0}$};
\node [above] at (3,-1) {$^{2}$};
\node [above] at (4,-1) {$^{0}$};
\node [] at (5,-1) {$^{1}$};
\node [draw=black, shape=circle, fill=black] at  (3,-1) {};

\node [] at (7,-1) {$^{1}$};
\node [above] at (8,-1) {$^{2}$};
\node [above] at (9,-1) {$^{1}$};
\node [above] at (10,-1) {$^{1}$};
\node [above] at (11,-1) {$^{0}$};
\node [] at (12,-1) {$^{1}$};
\node [draw=black, shape=circle, fill=black] at  (10,-1) {};

\end{tikzpicture}\caption{Scheme corresponding to Case 4'.2. }\label{GtoG*case4b}
\end{figure}
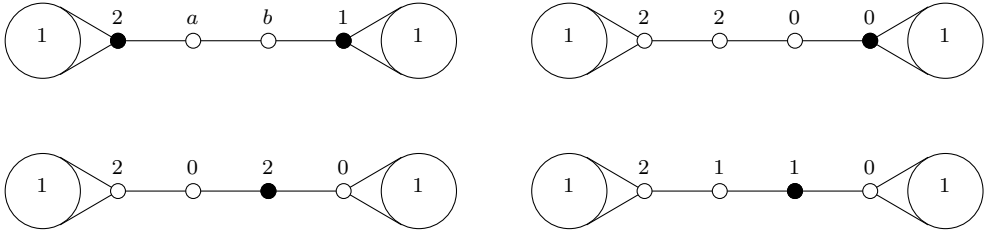

\item $g(H_{1})\geq 2$ and $g(H_{4})\geq 2$.

We will show that $g(S\times V(H))\geq 6$. To see this, we will try to place five legions in $S\times V(H)$ as shown in Figure \ref{GtoG*case4c}, where $a+b=1$. 
In this case we have a contradiction with Lemma \ref{twoouterweights}. Hence, $g(S \times V(H))\geq 6$ and so we  place the legions in the following way: for some $(x_0,y_0)\in B_1$ we set $g_1(x_0,y_0)=g(x_0,y_0)+1$, for some $(x_0',y_0')\in B_4$ we set $g_1(x_0',y_0')=g(x_0',y_0')+1$ and $g_1(x,y)=g(x,y)$ for every $(x,y)\in V(G^*\circ H)\setminus\{(x_0,y_0),(x_0',y_0')\}$.

\begin{figure}[!ht]
\centering
\begin{tikzpicture}
[transform shape, inner sep = .7mm]

\foreach \j in {1}
{
\foreach \i in {0}
{
\draw (\i+1,\j)--(\i+2,\j)--(\i+3,\j)--(\i+4,\j);
}
\foreach \i in {5}
{
\draw(\i-0.23,\j+0.45)--(\i-1,\j);
\draw (\i,\j) circle (0.5);
\draw(\i-0.23,\j-0.45)--(\i-1,\j);
}
\foreach \i in {0}
{
\draw(\i+0.23,\j+0.45)--(\i+1,\j);
\draw (\i,\j) circle (0.5);
\draw(\i+0.23,\j-0.45)--(\i+1,\j);
}
\foreach \i in {1,2,3,4}
{
\node [draw=black, shape=circle, fill=white] at  (\i,\j) {};
}
}

\node [] at (0,1) {$^{1}$};
\node [above] at (1,1) {$^{2}$};
\node [above] at (2,1) {$^{a}$};
\node [above] at (3,1) {$^{b}$};
\node [above] at (4,1) {$^{2}$};
\node [] at (5,1) {$^{1}$};
\node [draw=black, shape=circle, fill=black] at  (1,1) {};
\node [draw=black, shape=circle, fill=black] at  (4,1) {};

\end{tikzpicture}\caption{Scheme corresponding to Case 4'.3. }\label{GtoG*case4c}
\end{figure}
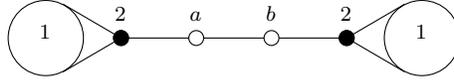
\end{enumerate}

\item $g(B_1)=1$ and $g(B_4)=0$. Notice that if $g(H_2)=0$ or $g(H_3)\le 1$, then we have a contradiction with Lemma \ref{twoouterweights}, so that $g(H_2)\ge 1$ and $g(H_3)\ge 2$. We differentiate two cases according to the value of $g(H_2)$:

\begin{enumerate}[5'.1]
\item $g(H_{2})=1$. By Lemma  \ref{twoouterweights}, we have that $g(H_{4})\ge 1$. Thus, we place the legions in the following way:  for some $(x_0,y_0)\in B_1$ we set $g_1(x_0,y_0)=\min\{2,g(H_4)-1\}$, for some $(x'_0,y'_0)\in B_4$ we set $g_1(x'_0,y'_0)=\min\{2,g(H_1)\}$ and $g_1(x,y)=g(x,y)$ for every $(x,y)\in V(G^*\circ H)\setminus \{(x_0,y_0), (x'_0,y'_0)\}$.

\item $g(H_{2})\ge 2$. In this case, we  place the legions in the following way: for some $(x_0,y_0)\in B_1$ we set $g_1(x_0,y_0)=\min\{2,g(H_4)\}$, for some $(x'_0,y'_0)\in B_4$ we set $g_1(x'_0,y'_0)=\min\{2,g(H_1)\}$ and $g_1(x,y)=g(x,y)$ for every $(x,y)\in V(G^*\circ H)\setminus \{(x_0,y_0), (x'_0,y'_0)\}$.
\end{enumerate}

\item $g(B_1)=g(B_4)=0$. Notice that if $g(H_2)\le 1$ or $g(H_3)\le 1$, then we have a contradiction with Lemma \ref{twoouterweights}, so that $g(H_2)\ge 2$ and $g(H_3)\ge 2$. We place the legions in the following way: for some $(x_0,y_0)\in B_1$ we set $g_1(x_0,y_0)=\min\{2,g(H_4)\}$, for some $(x'_0,y'_0)\in B_4$ we set $g_1(x'_0,y'_0)=\min\{2,g(H_1)\}$ and $g_1(x,y)=g(x,y)$ for every $(x,y)\in V(G^*\circ H)\setminus \{(x_0,y_0), (x'_0,y'_0)\}$.
\end{enumerate}

A simple case analysis shows that the vertices of $G^*\circ H$ are defended by the assignment of legions produced by $g_1$. 
Therefore, 
\begin{equation}\label{G*HleGH-4}
\gamma_r(G^*\circ H)\leq w(g_1)\le w(g)-4\leq\gamma_r(G\circ H) -4.
\end{equation}
Finally, by \eqref{GHleG*H+4} and \eqref{G*HleGH-4} we can conclude that
$\gamma_r(G\circ H)=\gamma_r(G^*\circ H)+4,$ as claimed.
\end{proof}

\section{Open problems}
\label{SectionOpenProblems}
Some closed formulae for $\gamma_r(G\circ H)$, obtained in Section \ref{SectionClosedFormulae}, were derived under the assumption  that  $\gamma_t(G)=\frac{1}{2}\max\{\gamma_r(G),2\rho(G)\}$ or $\gamma_{2,t}(G)=\max\{\gamma_r(G),2\rho(G)\}$ or $\gamma_r(G)=2\gamma(G)$.
This suggests the following open problems.

\begin{problem}\label{ProblemWRD=2dom}
Characterize the graphs with $\gamma_r(G)=2\gamma(G)$.
\end{problem}

\begin{problem}\label{ProblemWRD=2totaldom}
Characterize the graphs with $\gamma_r(G)=2\gamma_t(G)$.
\end{problem}

\begin{problem}
Characterize the graphs with $\gamma_{t}(G)=\rho(G)$.
\end{problem}

\begin{problem}
Characterize the graphs with $\gamma_r(G)=\gamma_{2,t}(G)$.
\end{problem}

\begin{problem}
Characterize the graphs with $\gamma_{2,t}(G)=2\rho(G)$.
\end{problem}

Notice that $\gamma_r(G)\le \gamma_R(G)\le 2\gamma(G)\le 2\gamma_t(G)$. Hence,  $\gamma_r(G)=2\gamma(G)$ if and only if $\gamma_r(G)=\gamma_R(G)$ and $G$ is a Roman graph. Furthermore,   
   $\gamma_r(G)=2\gamma_t(G)$ if and only if  all equalities hold true in the previous domination chain. Therefore, the starting point to solve Problems \ref{ProblemWRD=2dom} and \ref{ProblemWRD=2totaldom} is a deep investigation of Roman graphs. 

\section*{Acknowledgements}
Research supported in part by  the Spanish  government  under  the grant MTM2016-78227-C2-1-P.

\bibliographystyle{elsart-num-sort}



\end{document}